\newcommand{\@TITLE}{FIXME!}
\newcommand{\@SHORTTITLE}{FIXME!}
\newcommand{\@KEYWORDS}{FIXME!}
\newcommand{\@AMSSUBJ}{FIXME!}
\newcommand{\@ABSTRACT}{FIXME!}
\newcommand{\@VOLUME}{0}
\newcommand{\@PAPERNUM}{0}
\newcommand{\@YEAR}{2012}
\newcommand{\@PAGESTART}{1}
\newcommand{\@PAGEEND}{\pageref{LastPage}}  
\newcommand{\@SUBMITTED}{FIXME!}
\newcommand{\@ACCEPTED}{FIXME!}
\newcommand{\TITLE}[1]{\renewcommand{\@TITLE}{#1}}
\newcommand{\SHORTTITLE}[1]{\renewcommand{\@SHORTTITLE}{#1}}
\newcommand{\DEDICATORY}[1]{\gdef\@DEDICATORY{#1}}
\newcommand{\AUTHORS}[1]{\author{#1}}
\newcommand{\KEYWORDS}[1]{\renewcommand{\@KEYWORDS}{#1}}
\newcommand{\AMSSUBJ}[1]{\renewcommand{\@AMSSUBJ}{#1}}
\newcommand{\AMSSUBJSECONDARY}[1]{\gdef\@AMSSUBJSECONDARY{#1}}
\newcommand{\ABSTRACT}[1]{\renewcommand{\@ABSTRACT}{#1}}
\newcommand{\VOLUME}[1]{\renewcommand{\@VOLUME}{#1}}
\newcommand{\PAPERNUM}[1]{\renewcommand{\@PAPERNUM}{#1}}
\newcommand{\YEAR}[1]{\renewcommand{\@YEAR}{#1}}
\newcommand{\PAGESTART}[1]{\renewcommand{\@PAGESTART}{#1}} 
\newcommand{\PAGEEND}[1]{\renewcommand{\@PAGEEND}{#1}} 
\newcommand{\SUBMITTED}[1]{\renewcommand{\@SUBMITTED}{#1}}
\newcommand{\ACCEPTED}[1]{\renewcommand{\@ACCEPTED}{#1}}
\newcommand{\DOI}[1]{\gdef\@DOI{10.1214/\@JOURNAL.#1}}
\newcommand{\ARXIVID}[1]{\gdef\@ARXIVID{#1}}
\newcommand{\HALID}[1]{\gdef\@HALID{#1}}
\newcommand{\ARXIVPASSWORD}[1]{}
\newcommand{\EMAIL}[1]{E-mail:~\texttt{\href{mailto:#1}{#1}}}
\newcommand{\FIRSTPAGE}{%
  \setcounter{page}{\@PAGESTART}%
  \title{\Large\bfseries\@TITLE}
 \date{%
   \ifx\@DEDICATORY\undefined%
   \else%
   \noindent%
   \emph{\small\sffamily\@DEDICATORY}
   \fi%
}%
 \maketitle\thispagestyle{empty}%
 \begin{abstract}%
   \noindent%
   \@ABSTRACT\\[1em]%
   {\footnotesize%
     \textbf{Keywords: }\@KEYWORDS.\par%
     \noindent\textbf{AMS MSC 2010: }%
     \ifx\@AMSSUBJSECONDARY\undefined%
     \noindent%
     \@AMSSUBJ.\par%
     \else%
     \noindent%
     Primary \@AMSSUBJ, Secondary \@AMSSUBJSECONDARY.\par%
     \fi%
   }
 \end{abstract}

 \smallskip

}
\def\@MRExtract#1 #2!{#1}     
\newcommand{\MR}[1]{
  \xdef\@MRSTRIP{\@MRExtract#1 !}%
  \href{http://www.ams.org/mathscinet-getitem?mr=\@MRSTRIP}{MR-\@MRSTRIP}}
\newcommand{\ARXIV}[1]{\href{http://arXiv.org/abs/#1}{arXiv:#1}}
\renewenvironment{thebibliography}[1]{%
  \section*{\refname
    \@mkboth{\MakeUppercase\refname}{\MakeUppercase\refname}}%
  \phantomsection%
  \addcontentsline{toc}{section}{\refname}%
  \list{\@biblabel{\@arabic\c@enumiv}}%
  {\settowidth\labelwidth{\@biblabel{#1}}%
    \small%
    \setlength{\labelsep}{0.4em}%
    \setlength{\leftmargin}{\labelwidth}%
    \addtolength{\leftmargin}{\labelsep}%
    \setlength{\itemsep}{-.25em}%
    \@openbib@code
    \usecounter{enumiv}%
    \let\p@enumiv\@empty
    \renewcommand\theenumiv{\@arabic\c@enumiv}}%
  \sloppy\clubpenalty4000\@clubpenalty\clubpenalty\widowpenalty4000%
  \sfcode`\.\@m}{%
  \def\@noitemerr{%
    \@latex@warning{Empty `thebibliography' environment}}%
  \endlist}
 \newtheoremstyle{ejpecpbodyit}
 {3pt}
 {3pt}
 {\itshape}
 {}
 {\bfseries\sffamily}
 {.}
 { }
 {}
 \newtheoremstyle{ejpecpbodyrm}
 {3pt}
 {3pt}
 {}
 {}
 {\bfseries\sffamily}
 {.}
 { }
 {}
\thanks{M.B.\ and A.D.\ gratefully
    acknowledge support by DFG priority programme SPP~1590
    \emph{Probabilistic structures in evolution} through grants BI\
    1058/3-1 respectively Pf\ 672/6-1.}}
\renewcommand{\Pr}{\mathbb{P}}
\newcommand{\N}{\mathbb{N}}
\newcommand{\Z}{\mathbb{Z}}
\newcommand{\R}{\mathbb{R}}
\newcommand{\E}{\mathbb{E}}
\newcommand{\FF}{\mathcal{F}}
\newcommand{\e}{\varepsilon}
\newcommand\unnumberedfootnote[1]{ %
  \let\temp=\thefootnote %
  \renewcommand{\thefootnote}{}%
  \footnote{#1}%
  \let\thefootnote=\temp%
  \addtocounter{footnote}{-1}}
\newcommand{\abs}[1]{\lvert#1\rvert} 
\newcommand{\norm}[1]{\lVert#1\rVert} 
\newcommand{\ind}[1]{\mathbbm{1}_{\{#1\}}} 
\newcommand{\indset}[1]{\mathbbm{1}_{#1}} 
\newcommand\restr[2]{\ensuremath{\left.#1\right|_{#2}}} 
\newcommand\cs{{\mathsf{cs}}}
\newcommand\cone{{\mathsf{cone}}}
\newcommand\inn{{\mathrm{inn}}}
\newcommand\out{{\mathrm{out}}}
\newcommand\rf{{\mathrm{ref}}}
\newcommand\loc{{\mathrm{loc}}}
\newcommand\wt{\widetilde}
\theoremstyle{ejpecpbodyrm}%
\newtheorem{theorem}{Theorem}[section]%
\newtheorem{assumption}[theorem]{Assumption}%
\newtheorem{claim}[theorem]{Claim}%
\newtheorem{corollary}[theorem]{Corollary}%
\newtheorem{lemma}[theorem]{Lemma}%
\newtheorem{proposition}[theorem]{Proposition}%
\newtheorem{remark}[theorem]{Remark}%
\numberwithin{equation}{section}
\begin{document}



\section{Introduction}
\label{sec:introduction}

Let $\eta_n(x)$ be a random number of particles located at position $x
\in \Z^d$ at time $n$, where $\eta \coloneqq (\eta_n)_{n\in\Z}
\coloneqq (\eta_n(x) : x \in \Z^d)_{n\in\Z}$ is a stationary (discrete
time) Markovian particle system whose evolution can be described by
`local rules'. We assume that $\eta$ is in its unique non-trivial
ergodic equilibrium. Prototypical examples are the super-critical
discrete-time contact process, see \eqref{eq:CP} below, or systems of
logistic branching random walks, see \eqref{eq:flowdef} in
Section~\ref{sec:ancestr-line-locally}. We consider a random walk $X =
(X_k)_{k =0,1,\dots}$ that moves `backwards' through the medium
generated by $\eta$, i.e.\ given $\eta$, $X$ is a Markov chain and
given $X_k=x$, the law of the next increment is a function of $\eta$
in a finite window around the space-time point $(x,-k)$.

Our main result, see Theorem~\ref{thm:abstr-regen} in
Section~\ref{sect:abstr-setup}, provides a law of large numbers (LLN)
and an averaged central limit theorem (CLT) for $X$. Very broadly
speaking we require that the law of an increment of $X$ is close to a
fixed symmetric finite-range random walk kernel whenever the walk is
in a `good region' and that such good regions are sufficiently
frequent in a typical realisation of $\eta$. In particular we assume
that on suitably coarse-grained space-time scales, the occurrence of
good regions can be compared to super-critical oriented percolation.
The explicit assumptions are rather technical and we refer to
Sections~\ref{subsect:abstr-assumpt}--\ref{subsect:abstr-assumpt-walk}
for details.

The reversal of the natural time directions of $X$ and $\eta$
results from and is consistent with the interpretation of $X_k$ as
the position of the $k$-th ancestor of a particle picked from position
$X_0$ at time $0$. In fact, the spatial embeddings of ancestral
lineages in models with fluctuating population sizes and local
regulation are fairly complicated random walks in space-time dependent
random environments given by the time reversals of the local
population size processes. In biological applications, they are often
replaced by ordinary random walks without random environments via an
ad-hoc assumption of constant local population size; see for example
the discussion and references in \cite{BDE02} and Section~6.4 in
\cite{Etheridge2011}.

We verify that in a prototypical discrete spatial population model
with local regulation, namely logistic branching random walks with
Poisson offspring distributions, the assumptions of
Theorem~\ref{thm:abstr-regen} are satisfied if the population density
in equilibrium is sufficiently large. This allows to formulate in
Theorem~\ref{thm:lln-clt-rm} a LLN and a CLT for the ancestral lineage
of an individual sampled from such an equilibrium. Thus, we provide at
least a partial justification for the aforementioned ad-hoc
assumptions from biology in the sense that here, an ancestral lineage
will indeed behave like a random walk when viewed over large
space-time scales. This partly answers the question posed in
\cite[Chapter~4]{Dep08} in the affirmative.

\smallskip

As often in the study of random walks in random environments, the main
technical tool behind our results is a regeneration construction. The
details are somewhat involved; in principle, the medium $\eta$ can
have arbitrary dependence range and in general its time reversal
cannot be explicitly constructed using local rules. A similar problem
was faced in \cite{BCDG13} in the study of a directed random walk on
the backbone of an oriented percolation cluster. There, the particular
structure of oriented percolation allowed to jointly construct the
medium and the walk under the annealed law using suitable space-time
local operations (cf.\ \cite[Sect.~2.1]{BCDG13}) and therefrom deduce
the regeneration structure. This construction was extended
in \cite{Miller2016} to random walks on weighted oriented percolation
clusters with weights satisfying certain mixing conditions.

Here, we must use a different approach. Again very broadly speaking,
regeneration occurs after $T$ steps if the medium $\eta_{-T}$ in a
large window around $X_T$ is `good' and also the `local driving
randomness' of $\eta$ in a (large) neighbourhood of the space-time
path $\{(X_m,-m) : 0 \le m \le T \}$ has `good' properties. This
essentially enforces that the information about $\eta$ that the random
walk path has explored so far is a function of that local driving
randomness. Such a time allows to decouple the past and the future of
$X$ conditional on the position $X_T$ and $\eta_{-T}$ in a finite
window around that position. A difficulty arises from the fact that if
a regeneration fails at a given time $k$, then we have potentially
gained a lot of undesirable information about the behaviour of
$\eta_n$ at times $n < -k$ which might render successful regeneration
at a later time $\ell > k$ much less likely. We address this problem
by covering the path and the medium around it by a carefully chosen
sequence of eventually nested cones, see Figure~\ref{fig:cones1}. We
finally express $X$ as an additive functional of a Markov chain which
keeps track of the increments between regeneration times and local
configurations of $\eta$ at the regeneration points.

\smallskip

Note that random walks in dynamic random environments generated by
various interacting particle systems, in particular also by the
contact process in continuous time, have received considerable
attention recently; see for example
\cite{AvenaDosSantosVoellering:2013,AvenaJaraVoellering:2014,BethuelsenHeydenreich2015,HKS14,MV15,
RedigVoellering:2013}.
A fundamental difference to the present set-up lies in the time
directions. Traditionally, both the walker and the dynamic environment
have the same `natural' forwards time direction whereas here, forwards
time for the walk is backwards time for the medium. We also refer to
the more detailed discussion and references in \cite[Remark~1.7]{BCDG13}.

\smallskip

The rest of this manuscript is organised as follows. We first
introduce and study in Section~\ref{sect:model1} a class of random
walks which travel through the time-reversal of the discrete time
contact process, i.e., $\eta$ is literally a super-critical contact
process. We note that unlike the set-up in \cite{BCDG13}, here the
walk is also allowed to step on zeros of $\eta$. We use this simple
model to develop and explain our regeneration construction and obtain
a LLN and an annealed CLT in the `$p$ close to $1$' regime, see
Theorem~\ref{thm:LLNuCLTmodel1}. In Section~\ref{sect:abstr-setup} we
develop abstract conditions for spatial models and random walks in
dynamic random environments governed by the time-reversals of the
spatial models. Under these conditions on a coarse-grained space-time
grid we implement a regeneration construction similar to the one from
Section~\ref{sect:model1} and then obtain a LLN and an annealed CLT in
Theorem~\ref{thm:abstr-regen}. In Section~\ref{sect:model.real} we
introduce logistic branching random walks, the class of stochastic
spatial population models mentioned above. An ancestral lineage in
such a model is a particular random walk in a dynamic random
environment, see \eqref{eq:anclinquedyn}. We show that this class
provides a family of examples where the abstract conditions from
Section~\ref{sect:abstr-setup} can be verified. We believe that there
are several further classes of (population) models that satisfy the
abstract conditions from Section~\ref{sect:abstr-setup} in suitable
parameter regions. In Section~\ref{rem:furthermodels} we list and
discuss such models.

\smallskip

Finally, we note that a natural next step will be to extend our
regeneration construction to two random walks on the same realisation
of $\eta$ and to then also deduce a quenched CLT, analogous to
\cite{BCDG13}. We defer this to future work.

\paragraph{Acknowledgements:} We would like to thank Nina Gantert for
many interesting discussions on this topic and for her constant
interest and encouragement during the preparation of this work. We
also thank Stein Bethuelsen for carefully reading a preprint version
of the manuscript and his helpful comments. Finally, we are grateful
to an anonymous referee for her or his suggestions that made the
presentation more complete.

\section{An auxiliary model}
\label{sect:model1}

In this section we prove a law of large numbers and an annealed
(averaged) central limit theorem for a particular type of random walks
in dynamic random environments. The model is the simplest and the most
transparent among the models that we consider in this paper. The
proofs here contain already the main ideas and difficulties that we
will face in the following sections. It will also become clear later
how the dynamics of ancestral lineages in spatial stochastic
population models is related to this particular random walk.

\subsection{Definition of the model and results}
\label{sec:defin-model-results}

We define first the model that generates the dynamic random
environment of the random walk. Let
$\omega \coloneqq \{\omega(x,n): (x,n) \in \Z^d \times \Z\}$ be a
family of i.i.d.\ Bernoulli random variables with parameter $p>0$. We
call a site $(x,n)$ \emph{open} if $\omega(x,n)=1$ and \emph{closed}
if $\omega(x,n)=0$. Throughout the paper $\norm{\, \cdot\,}$ denotes
sup-norm. For $m \le n$, we say that there is an \emph{open path} from
$(x,m)$ to $(y,n)$ if there is a sequence $x_m,\dots, x_n\in \Z^d$
such that $x_m=x$, $x_n=y$, $\norm{x_k-x_{k-1}} \le 1$ for
$k=m+1, \dots, n$ and $\omega(x_k,k)=1$ for all $k=m,\dots,n$. In this
case we write $(x,m) \to^\omega (y,n)$, and in the complementary case
$(x,m) \not\to^\omega (y,n)$. For sets $A ,B \subseteq \Z^d$ and
$m \le n$ we write $A\times \{m\} \to^\omega B \times\{n\}$, if there
exist $x \in A$ and $y\in B$ so that $(x,m) \to^\omega (y,n)$. Here,
slightly abusing the notation, we use the convention that
$\omega(x,m)=\indset{A}(x)$ while for $k>m$ the $\omega(x,k)$ are
i.i.d.\ Bernoulli random variables as above. With this convention for
$A \subset \Z^d$, $m \in \Z$ we define the \emph{discrete time contact
  process} $\eta^A \coloneqq (\eta_n^A)_{n=m,m+1,\dots}$ driven by
$\omega$ as
\begin{align}
  \label{eq:CPA}
  \eta_{m}^A = \indset{A} \quad \text{and} \quad  \eta_n^A(x)
  \coloneqq \ind{A \times \{m\} \to^\omega (x,n)},\; \; n > m.
\end{align}
Alternatively $\eta^A = (\eta_n^A)_{n=m,m+1,\dots}$ can be viewed as a
Markov chain with $\eta^A_m = \indset{A}$ and the following local
dynamics:
\begin{align}
  \label{eq:DCP-dyn}
  \eta_{n+1}^A(x)
  & =
    \begin{cases}
      1 & \text{if $\omega(x,n+1)=1$ and $\eta_n^A(y)=1$ for some
        $y \in \Z^d$
        with $\norm{x-y} \le 1$}, \\
      0 & \text{otherwise}.
    \end{cases}
\end{align}
For a distribution $\mu$ on $\{0,1\}^{\Z^d}$ we write
$\eta^\mu=(\eta^\mu_n)_{n=m,m+1,\dots}$ for the discrete time contact
process with initial (random) configuration $\eta_m^\mu$ distributed
according to $\mu$.

\smallskip

The contact process $\eta^A=(\eta_n^A)_{n = m,m+1,\dots}$ is closely
related to \emph{oriented percolation}. In this context $A$ is the set
of `wet' sites at time $m$ and
$\{x \in \Z^d: \eta_n^A(x)=1\} \times \{n\}$ is the $n$-th time-slice
of the cluster of wet sites. Obviously for any $p<1$ the Dirac measure
on the configuration $\underline 0 \in \{0,1\}^{\Z^d}$ is a trivial
invariant distribution of the discrete time contact process. It is
well known that there is a critical percolation probability
$p_c\in (0,1)$ such that for $p>p_c$ and any non-empty
$A \subset \Z^d$ the process $\eta^A$ survives with positive
probability. Furthermore, in this case there is a unique non-trivial
extremal invariant measure $\nu$, referred to as the \emph{upper
  invariant measure}, such that, starting at any time $m \in \Z$ the
distribution of $\eta^{\Z^d}_n$ converges to $\nu$ as $n\to \infty$.

We assume $p>p_c$ throughout this section. Given a configuration
$\omega \in \{0,1\}^{\Z^d \times \Z}$, we define the \emph{stationary
  discrete time contact process} driven by $\omega$ as
\begin{align}
  \label{eq:CP}
  \eta \coloneqq (\eta_n)_{n \in \Z} \coloneqq \{\eta_n(x): x\in
  \Z^d, n\in \Z \}
  \quad \text{with} \quad \eta_n(x) \coloneqq \ind{\Z^d \times
  \{-\infty\} \to^\omega (x,n)}.
\end{align}
The event on the right hand side should be understood as
$\cap_{m\le n} \big\{\Z^d \times \{m\} \to^\omega (x,n)\big\}$. In the
above notation we have $\eta=\eta^{\Z^d}=(\eta_{n}^{\Z^d})_{n \in \Z}$.
Furthermore, since $\eta$ is a stationary Markov process, by abstract
arguments its time reversal is also a stationary Markov process with
the same invariant distribution; cf.\ Remark~\ref{rem:time-rev}.
Unless stated otherwise, throughout the paper $\eta$ denotes the
stationary discrete time contact process. Many other versions of
contact processes that will be needed in proofs will be labelled by
some superscripts similarly to the definition in~\eqref{eq:CPA}.

\medskip

To define a random walk in the random environment generated by
$\eta$, or more precisely by its time-reversal, let
\begin{align}
  \label{eq:Def.kappa}
  \kappa \coloneqq \bigl\{\kappa_n(x,y) : {n \in \Z, \,x,y \in
  \Z^d}\bigr\}
\end{align}
be a family of random transition kernels defined on the same
probability space as $\eta$, in particular
$\kappa_n(x, \,\cdot\, ) \ge 0$ and
$\sum_{y \in \Z^d} \kappa_n(x,y)=1$ holds for all $n\in \Z$ and
$x \in \Z^d$. Given $\kappa$, we consider a $\Z^d$-valued random
walk $X\coloneqq (X_n)_{n =0,1,\dots }$ with $X_0=0$ and transition
probabilities given by
\begin{align}
  \label{eq:defX}
  \Pr\big(X_{n+1}=y \, \big| \, X_n=x, \kappa \big) = \kappa_n(x,y),
\end{align}
that is, the random walk at time $n$ takes a step according to the
kernel $\kappa_n(x,\,\cdot\,)$ if $x$ is its position at time $n$. We make
the following four assumptions on the distribution of $\kappa$; see
Remark~\ref{rem:Xlocalconstr} for an interpretation.

\begin{assumption}[Locality]
  The \label{ass:local} transition kernels in the family $\kappa $
  depend locally on the time-reversal of $\eta$, that is for some
  fixed $R_\loc \in \N$
  \begin{align}
    \label{eq:ass:local}
    \kappa_n(x,\cdot) \; \text{ depends only on } \; \big\{ \omega(y,-n),
    \eta_{-n}(y) : \norm{x-y} \leq R_\loc\big\}.
  \end{align}
\end{assumption}

\begin{assumption}[Closeness to a symmetric reference measure on
  $\eta_{-n}(x)=1$]
  There \label{ass:appr-sym} is a deterministic symmetric probability
  measure $\kappa_\rf$ on $\Z^d$ with finite range $R_\rf \in \N$,
  that is $\kappa_\rf(x)=0$ if $\norm{x} > R_\rf$, and a suitably small
  $\varepsilon_\rf >0$ such that
  \begin{equation}
    \label{eq:ass:appr-sym}
    \norm{ \kappa_n(x,x+\,\cdot\,) -
      \kappa_\rf(\,\cdot\,) }_{\mathrm{TV}} <
    \varepsilon_\rf   \quad \text{whenever} \quad \eta_{-n}(x)=1.
  \end{equation}
  Here $\norm{\, \cdot \, }_{\mathrm{TV}}$ denotes the total variation
  norm.
\end{assumption}

\begin{assumption}[Space-time shift invariance and spatial point
  reflection invariance]
  The \label{ass:distr-sym} kernels in the family $\kappa$ are
  shift-invariant on $\Z^d \times \Z$, that is, using notation
  \begin{align*}
    \theta^{z,m} \omega (\,\cdot\,,\,\cdot\,)=\omega
    (z+\,\cdot\,,m+\,\cdot\,),
  \end{align*}
 we have
  \begin{align}
    \label{eq:sp-time-stat}
    \kappa_n(x,y)(\omega ) = \kappa_{n+m}
    (x+z,y+z)(\theta^{z,m}\omega).
  \end{align}
  Moreover, if $\varrho$ is the spatial point reflection operator
  acting on $\omega$, i.e., $\varrho \omega (x,n) = \omega(-x,n)$ for
  any $n \in \Z$ and $x \in \Z^d$, then
  \begin{align}
    \label{eq:point-refl-inv}
   \kappa_n(0,y)(\omega )=\kappa_n(0,-y)(\varrho \omega).
  \end{align}
\end{assumption}

\begin{assumption}[Finite range]
  There \label{ass:finite-range} is $R_\kappa \in \N$ such that a.s.\
  \begin{align}
    \label{eq:ass:kappa2}
    \kappa_n(x,y) = 0 \quad \text{whenever \;} \norm{y-x}>R_\kappa.
  \end{align}
\end{assumption}

\begin{remark}[Interpretation of the assumptions]
  The \label{rem:Xlocalconstr}
  Assumptions~\ref{ass:local}--\ref{ass:finite-range} are natural as
  we want to interpret the random walk as the spatial embedding of an
  ancestral lineage in a spatial population model, in which roughly
  speaking, children (if any present at a site) choose their parents
  at random from a finite neighbourhood in the previous generation.
  See also Section~\ref{sect:model.real} and in particular the
  discussion around \eqref{eq:anclinquedyn}.

  By \eqref{eq:defX} and \eqref{eq:ass:local}, we can, and often shall
  think of creating the walk from $\eta$ and $\omega$ in a local
  window around the current position and additional auxiliary
  randomness.
\end{remark}

The main result of this section is the following theorem. Its proof is
given in Section~\ref{sec:proof-theor-refthm:l}.

\begin{theorem}[LLN and annealed CLT]
  One \label{thm:LLNuCLTmodel1} can choose $0 < \varepsilon_\rf$
  sufficiently small and $p$ sufficiently close to $1$, so that if
  $\kappa$ satisfies
  Assumptions~\ref{ass:local}--\ref{ass:finite-range} then $X$
  satisfies the strong law of large numbers with speed $0$ and an
  annealed (i.e.\ when averaging over both $\omega$ and the walk)
  central limit theorem with non-trivial covariance matrix. A
  corresponding functional central limit theorem holds as well.
\end{theorem}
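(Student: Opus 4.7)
My plan is to implement the regeneration construction announced in the introduction and then derive the LLN and CLT from the resulting renewal structure. I would fix a coarse-graining scale $L\in\N$ and declare a space-time box of side $L$ in $\Z^d\times\Z$ to be \emph{good} if $\omega\equiv 1$ on it (other mild definitions work equally well). For $p$ sufficiently close to $1$ the good boxes dominate a supercritical Bernoulli field on the coarse grid, so by a Liggett--Schonmann--Stacey type comparison the set of bad boxes does not percolate. Inside a good region $\eta\equiv 1$ identically, hence by Assumption~\ref{ass:appr-sym} each increment of $X$ has total-variation distance at most $\varepsilon_\rf$ from the symmetric reference kernel $\kappa_\rf$, and $X$ can be coupled step-by-step to an i.i.d.\ $\kappa_\rf$-walk with per-step failure probability at most $\varepsilon_\rf$.

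\textbf{Regeneration via nested cones.} A candidate regeneration at time $T$ is declared when (i) the coarse-scale box containing $(X_T,-T)$ together with a large coarse neighbourhood is good, and (ii) a forward space-time cone $\Gamma_T$ with apex $(X_T,-T)$ and slope strictly greater than $R_\kappa+R_\loc$ contains the past trajectory $\{(X_m,-m):0\le m\le T\}$ and its $R_\loc$-enlargement. The \emph{main obstacle} is that $\eta_{-T}$ is not a local function of $\omega$: by \eqref{eq:CP}, $\eta_{-T}(x)$ depends on open paths reaching back to $n=-\infty$, so the walk has implicitly queried $\omega$ arbitrarily far below the current time slice. To resolve this I would use the standard fact that a supercritical discrete-time contact process restricted to a thick good slab couples to its stationary version with error exponentially small in the slab height; hence, on the complement of an event of probability exponentially small in $L$, $\eta_{-T}$ restricted to a window around $X_T$ is determined by $\omega$ in a large-but-finite neighbourhood of $(X_T,-T)$ that can be taken to lie inside $\Gamma_T$. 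Each failed regeneration attempt reveals additional $\omega$-information outside the current cone; I would handle this by working with the eventually nested sequence of increasingly wide cones indicated in Figure~\ref{fig:cones1}, chosen so that every new attempt queries only $\omega$-values that are still unconditionally i.i.d.\ Bernoulli$(p)$ given the information revealed so far. Comparison with supercritical oriented percolation on the coarse grid then gives that the first successful regeneration time $T_1$ has a stretched-exponential tail.

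\textbf{LLN and CLT.} Given the construction, the sequence $(Y_k,\tau_k,\xi_k)_{k\ge 1}$, where $Y_k=X_{T_k}-X_{T_{k-1}}$, $\tau_k=T_k-T_{k-1}$ and $\xi_k$ encodes the local configuration of $\eta_{-T_k}$ and of $\omega$ in a window around $X_{T_k}$, forms a Markov chain and the $(Y_k,\tau_k)$ are an additive functional of it with finite exponential moments. Assumption~\ref{ass:distr-sym} combined with the invariance in law of $\omega$ under the spatial reflection $\varrho$ implies that $(Y_1,\tau_1)$ has the same law as $(-Y_1,\tau_1)$, so $\E[Y_1]=0$; the law of large numbers for additive functionals of the regeneration chain then yields $X_n/n\to 0$ almost surely. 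The annealed CLT and its functional version follow from the CLT for additive functionals of a geometrically ergodic Markov chain applied to $\sum_{k\le n}Y_k$, combined with the deterministic bound $\max_{T_{k-1}\le n<T_k}\Norm{X_n-X_{T_{k-1}}}\le R_\kappa\tau_k$ and $T_k/k\to\E_\pi[\tau_1]$ under the stationary law $\pi$ of $(\xi_k)$. Non-degeneracy of the limiting covariance is inherited from $\kappa_\rf$: with positive probability the entire segment between two consecutive regeneration times consists of unperturbed $\kappa_\rf$-distributed steps, whose covariance is a positive multiple of the identity by the symmetry of $\kappa_\rf$.
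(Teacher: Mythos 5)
Your general plan -- regeneration times obtained via nested cones, then renewal/Markov-chain limit theory -- follows the paper's broad outline, but there is a genuine gap at the heart of the construction, namely how information about $\eta$ gets "forgotten" at a candidate regeneration time. The paper carefully separates two kinds of information about $\eta$ that the walk has accumulated: \emph{negative} information ($\eta_{-m}(y)=0$), which is localisable because the reason for a zero is a closed contour whose size has exponential tail (this is what the determining triangles $D(x,n)$ of \eqref{eq:Dxn} and the auxiliary stopping times $\sigma_i$ of \eqref{eq:sigmas} handle); and \emph{positive} information ($\eta_{-m}(y)=1$), which is \emph{not} localisable in $\omega$ because it literally asserts the existence of an open path reaching back to $n=-\infty$, hence conditions $\omega$ at arbitrarily negative times -- i.e.\ in the walk's future. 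Your statement that the nested cones can be chosen so that "every new attempt queries only $\omega$-values that are still unconditionally i.i.d.\ given the information revealed so far" is precisely what fails: the conditioning created by observed $1$'s of $\eta$ is global and cannot be removed merely by being careful about which fresh $\omega$-boxes you inspect next. The paper's Lemma~\ref{lem:CPconeconv} and the set of "good" conical shells \eqref{def:goodconshells} are the device that closes this gap: on the shell event, every $\eta$-value inside the inner cone is a deterministic function of $\eta_{-T}$ restricted to the $b_\out$-ball (which is checked to be $\equiv 1$) and the $\omega$'s inside the shell, so the positive information becomes a function of local data and can be conditioned out. Your replacement argument -- "a thick good slab couples to stationarity" -- does not supply this; it is a statement about the forward evolution given an initial condition and does not show that the walk's observations factor through local data. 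Similarly, your claim that $\eta\equiv 1$ identically inside a box with $\omega\equiv 1$ is not quite right without an additional input about $\eta$ being nonzero at the bottom of the box, which is itself a positive-information statement that needs to be handled.

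Two smaller points. The paper achieves only a polynomial tail for $T_1$, see \eqref{eq:reg-tales}; a stretched-exponential tail is not attainable with the exponentially growing cone sequence $t_\ell\approx\rho^\ell$ and is stronger than what the CLT requires, so your stated tail is too optimistic. Also, the claim that the limiting covariance is "a positive multiple of the identity by the symmetry of $\kappa_\rf$" is false: Assumption~\ref{ass:appr-sym} only requires point symmetry $\kappa_\rf(x)=\kappa_\rf(-x)$, which does not imply isotropy (e.g.\ $\kappa_\rf$ supported on $\pm e_1$). The theorem only asserts a non-trivial covariance matrix, and the paper's remark after the proof explicitly notes that isotropy of the limit law requires additional symmetries such as invariance under coordinate permutations and reflections.

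Finally, as a matter of technique rather than a gap: your coarse-graining step with boxes declared good when $\omega\equiv 1$ plus a Liggett--Schonmann--Stacey comparison is not what Section~\ref{sect:model1} of the paper does (it works directly at the fine scale using determining triangles), though it is in the spirit of the abstract Section~\ref{sect:abstr-setup}. That route could in principle be made rigorous, but then the content of the needed "good cone-shell" event must still be defined and shown to have high probability, i.e.\ you still need the substance of Lemma~\ref{lem:CPconeconv} on the coarse grid.
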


\begin{remark}[Time-reversal of $\eta$, oriented percolation
  interpretation]
  In \cite{BCDG13}, the \label{rem:time-rev} stationary process $\eta$
  was equivalently parametrised via its time reversal
  \begin{align*}
    \xi \coloneqq \{ \xi(x,n): x \in \Z^d, n \in \Z\} \quad\text{with }
   \xi(x,n) \coloneqq \eta_{-n}(x).
  \end{align*}
  Then $\xi$ is the indicator of the \emph{backbone} of the oriented
  percolation cluster and it was notationally and conceptually
  convenient to use in \cite{BCDG13}, not least because then the
  \emph{medium} $\xi$ and the walk $X$ had the same positive time
  direction.

  Here, we keep $\eta$ as our basic datum because we wish to emphasise
  and in fact later use in Section~\ref{sect:abstr-setup} the
  interplay between the medium $\eta$, interpreted as describing the
  dynamics of a population, and the walk $X$, cf.\ \eqref{eq:defX}
  above, describing the embedding of an ancestral lineage.
  Furthermore, in the more general population models, as the one
  studied in Section~\ref{sect:model.real} for instance, there will be
  no natural parametrization of the time-reversal of $\eta$.
\end{remark}

Note that the assertions of Theorem~\ref{thm:LLNuCLTmodel1} are in a
sense conceptual rather than practical because the proofs of
preliminary results in Section~\ref{subs:regen-model1} require $1-p$
to be very small. Situations with $p>p_c$ but also $1-p$ appreciably
large require an additional coarse-graining step so that the arguments
from Section~\ref{sect:abstr-setup} can be applied.

In order to prove Theorem~\ref{thm:LLNuCLTmodel1} we will construct
suitable regeneration times and show that the increments of these
regeneration times as well as the corresponding spatial increments of
the walk have finite moments of order $b$ for some $b>2$. This
construction is rather intricate. The main source of complications
stems from the fact that in order to construct the random walk $X$ one
should know $\omega$ and $\eta$ in the vicinity of its trajectory;
cf.\ Remark~\ref{rem:Xlocalconstr} above. While it is easy to deal
with the knowledge of $\omega $'s, because they are i.i.d., the
knowledge of $\eta$ leads to problems. Due to definition \eqref{eq:CP}
of $\eta$ and Assumption~\ref{ass:local} on $\kappa$, this knowledge
provides non-trivial information about the \emph{past behaviour} of
$\eta$ and therefore also about the \emph{future behaviour} of $X$.
Both is not desirable at regeneration times.

More precisely, we need to deal with two types of information on
$\eta$. The first type, the \emph{negative} information, that is
knowing that $\eta_n(x)=0$ for some $n$ and $x$ is dealt with
similarly as in \cite{BCDG13}. The key observation is that such
information is essentially local: To discover that $\eta_n(x)=0$ one
should check that $\Z^d\times \{-\infty\} \not\to^\omega (x,n)$ which
requires observing $\omega $'s in a layer $\Z^d \times\{n-T,\dots,n\}$
where $T$ is a random variable with exponentially decaying tails. The
second type of information, the \emph{positive} one, that is knowing
that $\eta_n(x)=1$, is removed by making use of strong coupling
properties of the forwards-in-time dynamics of $\eta$. When at time
$-t$ we have $\eta_{-t} \ge \indset{x+\{-L,\dots,L\}^d}$ pointwise,
then there is a substantial chance that every infection, i.e., every
`$1$' of $\eta$, inside a growing space-forwards-time cone with base
point $(x,-t)$ can be traced back to
$(x+\{-L,\dots,L\}^d) \times \{-t\}$. Furthermore, whether this event
has occurred can be checked by observing the restriction of $\eta_{-t}$
to $x+\{-L,\dots,L\}^d$ and the $\omega$'s inside a suitably fattened
shell of the cone in question, in particular without looking at any
$\eta_m(y)$ for $m<-t$; see \eqref{eq:coneshell} and
Lemma~\ref{lem:CPconeconv} below. We will construct suitable random
times $T$ at which this event occurs for $\eta$ at the current
space-time position $(X_T,-T)$ of the walker and in addition the
space-time path of the walk up to $T$, $\{(X_k,-k) : 0 \le k \le T\}$,
is completely covered by a suitable cone. Such a time $T$ allows to
regenerate.

\smallskip

For the proof of Theorem~\ref{thm:LLNuCLTmodel1} we first collect some
results on the high density discrete time contact process in
Section~\ref{ss:contactprocess}. We then rigorously implement the
regeneration construction sketched above in
Section~\ref{subs:regen-model1}. Finally we prove
Theorem~\ref{thm:LLNuCLTmodel1} in
Section~\ref{sec:proof-theor-refthm:l}.

\subsection{Some results about the contact process}
\label{ss:contactprocess}

This section contains several estimates for the discrete-time contact
process $\eta$ that will be crucial for the regeneration
construction. The main results of this section are the estimates given
in Lemma~\ref{lem:drysitesbd} and Lemma~\ref{lem:CPconeconv}. We start
by recalling two well known results.

\begin{lemma} For \label{lem:exp-bound-perc} $p>p_c$ there exist
  $C(p),c(p) \in (0,\infty)$ such that
  \begin{align}
    \label{est:exp-bound-perc}
    \Pr \bigl(\Z^d\times\{-n\} \to^\omega (0,0) \; \text{ and }
    \; \Z^d\times\{-\infty\} \not\to^\omega (0,0)  \bigr) \leq
    C(p)e^{-c(p) n}, \quad n \in \N.
  \end{align}
  Moreover, we have $\limsup_{p\nearrow 1} C(p) <\infty$ and
  $\lim_{p\nearrow 1}c(p)=\infty$.
\end{lemma}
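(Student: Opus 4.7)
The plan is to reformulate the event in terms of the \emph{backward open cluster} of $(0,0)$, namely
\[
\CC^-(0,0) \coloneqq \{(x,-k) : k \ge 0,\; (x,-k) \to^\omega (0,0)\},
\]
and to let $\tau \coloneqq \sup\{k \ge 0 : \exists\, x \in \Z^d,\; (x,-k) \to^\omega (0,0)\}$ be its temporal extent. Then the event in \eqref{est:exp-bound-perc} is precisely $\{\tau \ge n,\, \tau < \infty\}$, i.e.\ the backward cluster survives down to level $-n$ but eventually dies out. So the assertion reduces to showing the exponential tail bound $\Pr(n \le \tau < \infty) \le C(p) e^{-c(p) n}$, with the stated behaviour of $C(p),c(p)$ as $p\nearrow 1$.

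For general $p > p_c$, I would invoke the classical fact that the discrete-time supercritical oriented percolation cluster, conditioned on being finite, has exponentially decaying diameter. This is the ``sharpness of the phase transition in the supercritical direction'' obtained by Durrett and Schonmann and implicitly by the renormalisation arguments of Grimmett--Hiemer for oriented percolation, and could be cited directly. Concretely, the event $\{\tau < \infty\}$ forces the existence of a ``blocking'' configuration separating $(0,0)$ from infinity; a standard block argument on space-time boxes of side length proportional to $n$ shows that conditional on non-percolation the probability of reaching temporal depth $n$ decays exponentially in $n$, uniformly on compact subsets of $(p_c,1]$.

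For the behaviour as $p \nearrow 1$, I would give a quick Peierls-type contour estimate. If $\tau < \infty$, the finite set $\CC^-(0,0)$ admits an (outer) space-time boundary $\partial \CC^-(0,0)$ consisting only of closed sites (otherwise the cluster could be extended). On the event $\{\tau \ge n\}$, any such boundary must contain at least $cn$ distinct closed sites, because the cluster spans $n$ temporal layers and each transition between layers must be blocked somewhere on the forward boundary. A standard enumeration of possible contours of size $k$ yields at most $\mu^k$ many for some $\mu = \mu(d)$, so
\[
\Pr(n \le \tau < \infty) \le \sum_{k \ge cn} \mu^k (1-p)^k,
\]
which gives $C(p) \to $ const.\ and $c(p) = \log\frac{1}{\mu(1-p)} \to \infty$ as $p\nearrow 1$, as required.

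The main obstacle is the interface between the two regimes: the high-$p$ Peierls argument is clean but only delivers decay for $p$ sufficiently close to $1$, while uniform decay down to the critical point $p_c$ is a genuinely deeper fact that rests on the sharpness of the phase transition for oriented percolation. For the uses of this lemma in the sequel, only the $p \nearrow 1$ regime matters (cf.\ the placement of Theorem~\ref{thm:LLNuCLTmodel1}), so if one wishes to avoid citing sharpness one can prove the full statement using only the contour bound and restrict attention to $p$ sufficiently close to $1$; otherwise one invokes the existing literature to cover all $p > p_c$.
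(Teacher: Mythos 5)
Your proposal is correct and mirrors the paper's one-line proof: both translate the event, via time-reversal/self-duality, into the probability that $\eta^{\{0\}}$ is alive at time $n$ yet eventually dies out, and then appeal to the classical exponential estimate (Durrett, Grimmett--Hiemer, or Lemma~A.1 of \cite{BCDG13}). The paper leaves the $p\nearrow 1$ asymptotics of $C(p),c(p)$ implicit in those references, whereas you sketch a self-contained Peierls contour argument for that regime, which is a sound and natural complement.
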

\begin{proof}
  Due to self-duality of the contact process this is a reformulation
  of the fact that for $p > p_c$ and
  $\eta^{\{0\}}=(\eta^{\{0\}}_n)_{n = 0,1,\dots}$ there exist
  $C(p), c(p) \in (0,\infty)$ such that
\begin{align}
    \label{eq:exp-bound-perc:classic}
  \Pr\bigl(\eta^{\{0\}}_n \not\equiv 0 \, \text{ and } \,
  \eta^{\{0\}}\;\text{eventually dies out}  \bigr) \le C(p) e^{-c(p) n},
  \quad n \in \N.
\end{align}
For a proof of the latter assertion we refer to e.g.\
\cite{Dur84,GH02} or Lemma~A.1 in \cite{BCDG13}.
\end{proof}

\begin{lemma}
  \label{lem:DG82coupl}
  Let $\eta^{\{0\}}=(\eta^{\{0\}}_n)_{n = 0,1,\dots}$ and let
  $\eta^\nu=(\eta_n^\nu)_{n = 0,1,\dots}$, where $\nu$ is the upper
  invariant measure. For $p$ sufficiently close to $1$ there exist
  $s_{\mathrm{coupl}}>0$, $C<\infty$, $c>0$ such that
  \begin{align}
    \label{eq:DG82coupl}
    \Pr\bigl( \eta^{\{0\}}_n(x) = \eta^\nu_n(x) \; \text{ for all } \;
    \norm{x} \le s_{\mathrm{coupl}} n  \, | \, \eta^{\{0\}}_n \not
    \equiv 0 \bigr) \ge 1 - C e^{-c n}, \quad n \in \N.
  \end{align}
\end{lemma}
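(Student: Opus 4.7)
The natural approach is to couple $\eta^{\{0\}}$ and $\eta^\nu$ on a common graphical representation and then reduce the disagreement event to a linear-growth estimate for the dual contact process. Concretely, I would drive both processes by the same field $\omega$, so that monotonicity yields $\eta^{\{0\}}_n\le\eta^\nu_n$ pointwise. A disagreement at a site $(x,n)$ with $\norm{x}\le s_{\mathrm{coupl}}n$ is then the event $\{\eta^\nu_n(x)=1,\,\eta^{\{0\}}_n(x)=0\}$.

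I would then introduce the dual (backward) cluster
\[
\hat\eta^{(x,n)}_k := \bigl\{y\in\Z^d:(y,n-k)\to^\omega(x,n)\bigr\},\qquad k=0,1,\dots,n,
\]
which, read in reversed time, is itself a discrete-time contact process driven by $\omega$ and started from the single site $\{x\}$. In these terms $\eta^{\{0\}}_n(x)=1$ iff $0\in\hat\eta^{(x,n)}_n$, whereas $\eta^\nu_n(x)=1$ iff $\hat\eta^{(x,n)}_k\ne\emptyset$ for every $k\ge 0$ (the dual survives backward to $-\infty$). Hence the disagreement event is exactly
\[
\bigl\{\hat\eta^{(x,n)}\text{ survives forever}\bigr\}\cap\bigl\{0\notin\hat\eta^{(x,n)}_n\bigr\}.
\]

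I would bound this event in two steps. First, by Lemma~\ref{lem:exp-bound-perc} applied to the dual (a time-reversed contact process with the same Bernoulli parameter $p$), the probability that $\hat\eta^{(x,n)}_n\ne\emptyset$ yet $\hat\eta^{(x,n)}$ eventually dies out is at most $Ce^{-cn}$; up to this error we may therefore replace ``survives forever'' by the local event $\{\hat\eta^{(x,n)}_n\ne\emptyset\}$. Second, conditional on the latter, I would invoke a quantitative linear-growth/shape estimate: there exists $v(p)>0$ with $v(p)\to 1$ as $p\to 1$ such that
\[
\Pr\bigl(\hat\eta^{(x,n)}_n\supset\{y:\norm{y-x}\le v(p)n\}\,\big|\,\hat\eta^{(x,n)}_n\ne\emptyset\bigr)\ge 1-Ce^{-cn}.
\]
For $p$ sufficiently close to $1$ one can then pick $s_{\mathrm{coupl}}<v(p)$, so that $\norm{x}\le s_{\mathrm{coupl}}n$ forces $0\in\{y:\norm{y-x}\le v(p)n\}\subset\hat\eta^{(x,n)}_n$ with overwhelming probability. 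Combining the two estimates and union-bounding over the polynomially many sites $\norm{x}\le s_{\mathrm{coupl}}n$ yields a total disagreement probability of order $n^d e^{-cn}\le C'e^{-c'n}$. Dividing by $\Pr(\eta^{\{0\}}_n\not\equiv 0)$, which is bounded below uniformly in $n$ by the (positive) survival probability, gives~\eqref{eq:DG82coupl}.

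The main obstacle is the quantitative shape/growth estimate underlying the second step, in particular the statement $v(p)\to 1$. Heuristically this is clear because at $p=1$ the process deterministically fills the sup-norm ball of radius $n$ after $n$ steps; rigorously one either invokes a Durrett--Griffeath-type shape theorem with its standard large-deviation upgrade, or, alternatively, one can run a direct Peierls/contour argument tailored to the high-$p$ regime: for $\norm{x}\le s_{\mathrm{coupl}}n$ with $s_{\mathrm{coupl}}$ well below $1$, the event $\{0\notin\hat\eta^{(x,n)}_n\}$ conditional on survival forces the existence of a blocking surface of closed sites of size of order $n$ separating $(0,0)$ from $(x,n)$, and the Peierls sum over such surfaces is bounded by $(1-p)^{\Omega(n)}$ once $p$ is close enough to $1$ to beat the entropy.
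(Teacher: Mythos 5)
There is a genuine gap, and it sits exactly where the conditioning on $\{\eta^{\{0\}}_n\not\equiv 0\}$ enters. Your plan is to bound the \emph{unconditioned} probability of a disagreement in the ball of radius $s_{\mathrm{coupl}}n$ by $Ce^{-cn}$ and only at the end divide by $\Pr(\eta^{\{0\}}_n\not\equiv 0)$. But the unconditioned disagreement probability does not decay: on the event that $\omega(y,1)=0$ for all $\norm{y}\le 1$ (probability $(1-p)^{3^d}>0$), one has $\eta^{\{0\}}_n\equiv 0$ for all $n\ge 1$, while $\eta^\nu_n(x)=1$ still holds with probability bounded below; hence $\Pr\bigl(\eta^\nu_n(x)=1,\ \eta^{\{0\}}_n(x)=0\bigr)\ge c_0>0$ uniformly in $n$, and the union-bound-then-divide strategy cannot produce an exponential bound. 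The same example falsifies your key shape estimate: conditioned on $\hat\eta^{(x,n)}_n\ne\emptyset$, the event $\{0\notin\hat\eta^{(x,n)}_n\}$ (equivalently $(0,0)\not\to^\omega(x,n)$) retains probability bounded away from $0$. Indeed the inclusion $\hat\eta^{(x,n)}_n\supset\{y:\norm{y-x}\le v(p)n\}$ essentially never holds: every $y$ with $\omega(y,0)=0$ is excluded from the dual cluster, so the cluster has density strictly less than $1$ and cannot cover a ball of radius of order $n$. What Durrett--Griffeath actually provide is \emph{agreement} of the cluster with the process started from all of $\Z^d$ inside a linearly growing ball, not full occupancy of that ball. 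The Peierls fallback inherits the same defect: $\{0\notin\hat\eta^{(x,n)}_n\}$ is already forced by an $O(1)$-size blocking cap over the origin, not by a surface of size $\Omega(n)$, so the entropy-versus-energy comparison does not get off the ground without first excluding small caps — which is precisely what the conditioning on $\{\eta^{\{0\}}_n\not\equiv 0\}$ is for.

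The missing idea is that \emph{both} clusters must be brought into play. Conditioned on $\{\eta^{\{0\}}_n\not\equiv 0\}$, the forward cluster of $(0,0)$ is, up to an exponentially small error controlled by \eqref{eq:exp-bound-perc:classic}, a surviving cluster and hence grows linearly and is dense on a linear scale; conditioned on $\eta^\nu_n(x)=1$, the backward cluster of $(x,n)$ reaches level $0$ and grows linearly as well. The two linearly growing, dense clusters must then intersect with probability $1-Ce^{-cn}$, and concatenating open paths at an intersection point yields $(0,0)\to^\omega(x,n)$, i.e.\ $\eta^{\{0\}}_n(x)=1$. This forward--backward intersection argument is exactly the content of \cite[Prop.~6, Eqs.~(33)--(34)]{DG82}, which is what the paper invokes directly, adding only the observation that conditioning on survival there may be replaced by conditioning on $\{\eta^{\{0\}}_n\not\equiv 0\}$ thanks to \eqref{eq:exp-bound-perc:classic}. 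Your graphical coupling, the duality reduction, and the use of Lemma~\ref{lem:exp-bound-perc} are all fine; but without the forward cluster the argument cannot close.
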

\begin{proof}
  For the contact process in continuous time, this is proved in
  \cite{DG82}, see in particular (33) and (34) in Proposition~6 there.
  Although literally, \cite[Eq.~(34)]{DG82} refers to conditioning on
  $\{\eta^{\{0\}}\;\text{survives}\}$ the result follows in view of
  \eqref{eq:exp-bound-perc:classic}.
\end{proof}
\begin{remark}
  In \cite{FvZ03} it is shown (literally, for the contact process in
  continuous time) that for any $p>p_c$ and $a>0$, there is a
  $C<\infty$ such that the probability on the left hand side of
  \eqref{eq:DG82coupl} is bounded below by $1 - C n^{-a}$.

  More recently, in \cite{GaretMarchand2014} large deviations for the
  continuous time contact process in a random environment were
  studied. Among other results it is shown that exponential decay as
  in \eqref{eq:DG82coupl} holds in the supercritical case; see
  Theorem~1, Eq.~(2) there.
\end{remark}

The first main result of this section is the following lemma on
controlling the probabilities of certain \emph{negative} events; cf.\
Lemma~7 in Section~3 of \cite{Dur92} for a related result.

\begin{lemma}
  For \label{lem:drysitesbd} $p$ large enough there exists
  $\varepsilon(p) \in (0,1]$ satisfying
  $\lim_{p\nearrow 1}\varepsilon(p)= 0$ such that for any
  $V = \{ (x_i, t_i) : 1 \le i \le k \} \subset \Z^d \times \Z$ with
  $t_1 > t_2 > \cdots > t_k$, we have
  \begin{align}
    \label{eq:drysitesbd}
    \Pr\bigl( \eta_{t}(x)=0 \; \text{for all}\; (x,t) \in V \bigr) \leq
    \varepsilon(p)^k.
  \end{align}
\end{lemma}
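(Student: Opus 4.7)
The plan is to combine a single-point ``dry probability'' bound with the exponential-tail locality estimate of Lemma~\ref{lem:exp-bound-perc}. By stationarity, set $\varepsilon_0(p) := \Pr(\eta_t(x) = 0)$, independent of $(x,t)$; it is classical (e.g.\ via Liggett--Schonmann--Stacey stochastic domination of the upper invariant measure by a high-density Bernoulli product) that $\varepsilon_0(p) \to 0$ as $p \nearrow 1$. The task is then to control the joint dry probability in the presence of the positive (FKG) correlation between such events.

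The key device is \emph{localization}. For a parameter $L \in \N$ to be tuned with $p$, I would introduce for each $i$ the local event $B_i := \{\Z^d \times \{t_i - L\} \not\to^\omega (x_i, t_i)\}$, measurable with respect to $\omega$ restricted to the finite space-time slab $(x_i + [-L, L]^d) \times (t_i - L, t_i]$. The decomposition $\{\eta_{t_i}(x_i) = 0\} \subseteq B_i \cup F_i$, with $F_i$ the exceptional event of Lemma~\ref{lem:exp-bound-perc} satisfying $\Pr(F_i) \le C(p) e^{-c(p) L}$, reduces the task to controlling joint occurrences of local events plus a union-bound contribution from the exceptional ones.

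I would then write
\[
  \Pr\Bigl(\bigcap_{i=1}^k \{\eta_{t_i}(x_i) = 0\}\Bigr) \le \sum_{S \subseteq \{1,\ldots,k\}} \Pr\Bigl(\bigcap_{i \in S} B_i \cap \bigcap_{i \notin S} F_i\Bigr),
\]
bound the $F$-factor by $(C(p) e^{-c(p) L})^{k - |S|}$, and for the $B$-factor exploit near-independence of $B_i$'s supported on disjoint slabs. When the gaps $t_{i-1} - t_i$ all exceed $L$ the $B_i$'s are genuinely independent, giving $\le \varepsilon_0(p)^{|S|}$. For clusters of $t_i$'s closer than $L$ apart, I would group them and apply the forward Markov property of $\eta$ together with the iterated one-step recursion $\{\eta_t(x) = 0\} \subseteq \{\omega(x,t) = 0\} \cup \bigcap_{\norm{y-x} \le 1} \{\eta_{t-1}(y) = 0\}$ to extract a fresh factor of order $\varepsilon_0(p)$ per clustered point. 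A suitable choice $L = L(p) \to \infty$ as $p \nearrow 1$ balances the two error contributions so that the combined constant $\varepsilon(p)$ still tends to zero.

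The main obstacle is precisely the positive FKG correlation: the events $\{\eta_{t_i}(x_i) = 0\}$ are decreasing in $\omega$, so naive conditioning on earlier events can only raise the chance of later ones, giving the wrong direction for induction. The localization device sidesteps this by replacing each dry event (modulo an exponentially small error) by an event depending on fresh $\omega$'s in a bounded slab, so that different $B_i$'s measurable with respect to disjoint slabs become genuinely independent. The delicate point is the bookkeeping needed to ensure that every one of the $k$ points contributes a full factor of order $\varepsilon_0(p)$, even when several $t_i$'s cluster within a single slab of thickness $L$; this is where the one-step expansion plus the spatial finite range of the contact-process dynamics must be exploited carefully.
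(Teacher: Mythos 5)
Your proposal correctly identifies the relevant circle of ideas (localisation via the backward cluster, the exponential tail bound of Lemma~\ref{lem:exp-bound-perc}, independence of events supported on disjoint time slabs), but it has two genuine gaps, and the second one is precisely the heart of the lemma.

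First, your bound on the $F$-factor is not justified: the events $F_i = \{\Z^d\times\{t_i-L\}\to^\omega(x_i,t_i)\}\cap\{\eta_{t_i}(x_i)=0\}$ are each the intersection of an increasing and a decreasing event, depend on $\omega$ arbitrarily far into the past through the ``does not connect to $-\infty$'' part, and are therefore neither independent nor monotone, so neither disjointness of slabs nor FKG gives the product estimate $\Pr(\cap_{i\notin S}F_i)\le (Ce^{-cL})^{k-|S|}$; you would have to control the joint law of the $F_i$'s by some further argument (which, if you think about it, is essentially the same problem as controlling $\cap_i\{\eta_{t_i}(x_i)=0\}$ itself). Second, and more seriously, the case of $t_i$'s clustering within a window of length $L$ is exactly the difficult content of the lemma, and your sketch defers it to ``group them and apply the Markov property together with the iterated one-step recursion \dots to extract a fresh factor of order $\varepsilon_0(p)$ per clustered point \dots the bookkeeping needed \dots must be exploited carefully.'' This is not an argument; it is a statement of what would need to be proved. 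The difficulty is real: applying the one-step recursion $\{\eta_t(x)=0\}\subseteq\{\omega(x,t)=0\}\cup\bigcap_{\norm{y-x}\le 1}\{\eta_{t-1}(y)=0\}$ at $(x_1,t_1)$ and landing in the second branch produces dry events at time $t_1-1$ that may already \emph{contain} the constraint $\{\eta_{t_2}(x_2)=0\}$ when $t_2=t_1-1$ and $\norm{x_2-x_1}\le 1$, so no fresh factor is extracted in that branch without additional quantitative input about the probability of simultaneously dry neighbourhoods, which is again a version of the statement you are trying to prove.

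The paper avoids the fixed-cutoff dilemma entirely by using a \emph{random, adaptive} cutoff: it defines $D_i$ to be (essentially) the height of the finite backward cluster attached to $(x_i,t_i)$ on $\{\eta_{t_i}(x_i)=0\}$ and then runs the greedy recursion $S_1=1$, $S_{i+1}=S_i+D_{S_i}$. The key point is that the event $\{\widehat D_j = d_j\}$, which says the point $(x_{u(j)},t_{u(j)})$ carries a finite backward cluster whose bottom lies between levels $t_{u(j)+d_j-1}$ and $t_{u(j)+d_j}$, is measurable with respect to $\omega$ restricted to the time slab $(t_{u(j)+d_j}, t_{u(j)}]$. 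Since the $t_i$'s are strictly decreasing, consecutive greedy points automatically land in disjoint time slabs, so these events are genuinely independent, and each carries a factor $e^{-c_1(p)d_j}$ from \eqref{est:exp-bound-perc-hd} because the height of the slab is at least $d_j$. Summing $d_1+\cdots+d_m\ge k+1$ over the combinatorial structure of $\mathcal{I}(m,k)$ then produces $\varepsilon(p)^k$ without ever needing to argue about clustered $t_i$'s separately. In short: the paper replaces your fixed $L$ by the observed cluster heights themselves, which is what makes the argument close.
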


\begin{remark} In our proof of \eqref{eq:drysitesbd} it is essential
  that all $t_i$'s are distinct. For a general set
  $V \subset \Z^d \times \Z$ space-time boundary effects can play a
  role so that the decay will only be stretched exponential in $|V|$.
  For a concrete example in the case $d=1$ consider the set
  \begin{align*}
    V & =\{ (x,n) \in \Z \times \Z : \abs{x} \le h, 0 \le n \le h\} \\
    \intertext{and the event}
    B & = \{ \omega(y,0) = 0 , \abs{y} \le h+1\} \cap
        \{ \omega(\pm(h+1),k) = 0 , k =  0,1,\dots, h \}.
  \end{align*}
  Then, obviously we have
  $B \subset \{\eta_n(x) =0 \, \text{for all} \, (x,n) \in V\}$ and
  $\Pr(B) = (1-p)^{4h+3}$.

  Note however that in Corollary~4.1. in \cite{LiggettSteif2006} it
  is shown that the upper invariant measure of the continuous time
  contact process on $\Z^d$ dominates stochastically a product measure
  on $\{0,1\}^{\Z^d}$. Therefore a bound analogous to
  \eqref{eq:drysitesbd} does hold in the situation when all $t_i$'s
  are equal. Lemma~\ref{lem:drysitesbd} can be seen as a space-time
  extension of that result in the discrete time case and in the `$p$
  large enough' regime.
\end{remark}

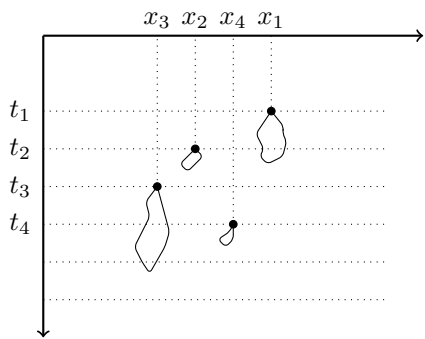
\begin{figure}[t]
  \centering
  \begin{tikzpicture}
    \draw [<-, thick] (3,1) -- (3,5); \draw [->, thick] (3,5) -- (8,5);

    \filldraw[black] (6,4) circle (0.05); \draw[dotted] (3,4) -- (7.5,4);
    \draw[dotted] (6,5)--(6,4); \draw (2.7,4) node {$t_1$};
    \draw (6,5.2) node {$x_1$};

    \draw[rounded corners =1.5] (6,4) -- (5.8,3.7) --
    (5.9,3.5)--(5.85,3.4)--(5.95,3.3) -- (6.05,3.35) -- (6.15,3.4) --
    (6.2,3.6) -- (6.15,3.7) -- (6.16,3.8) -- (6,4);

    \draw[dotted] (3,3.5) -- (7.5,3.5);
    \filldraw[black] (5,3.5) circle (0.05);
    \draw (2.7,3.5) node {$t_2$};
    \draw [dotted] (5,3.5) -- (5,5);
    \draw (5,5.2) node {$x_2$};
    \draw[rounded corners =1.5] (5,3.5) -- (4.8,3.3)-- (4.9,3.2) --
    (5.1, 3.4) -- (5,3.5);

    \filldraw[black] (4.5,3) circle (0.05);
    \draw[dotted] (3,3) -- (7.5,3);
    \draw[dotted] (4.5,3)--(4.5,5);
    \draw (2.7,3) node {$t_3$};
    \draw (4.5,5.2) node {$x_3$};

    \draw[rounded corners =1.5] (4.5,3) -- (4.35,2.8) -- (4.4,2.6) --
    (4.3,2.4) -- (4.2,2.2) -- (4.3, 2) -- (4.4,1.85) -- (4.6,2.2) --
    (4.66,2.4) -- (4.5,3);

    \draw[dotted] (3,2.5) -- (7.5,2.5);
    \draw[dotted] (5.5,2.5)--(5.5,5);
    \filldraw[black] (5.5,2.5) circle (0.05);
    \draw[rounded corners =1.5] (5.5,2.5) -- (5.45,2.4) -- (5.3,2.3)
    -- (5.4,2.2) -- (5.5,2.3) -- (5.5,2.5);
    \draw (2.7,2.5) node {$t_4$}; \draw (5.5,5.2) node {$x_4$};

    \draw[dotted] (3,2) -- (7.5,2);
    \draw[dotted] (3,1.5) -- (7.5,1.5);
  \end{tikzpicture}
  \caption{Possibly overlapping finite clusters starting at
    $V=\{(x_i,t_i)\}$ that appear in Lemma~\ref{lem:drysitesbd}.
    Here $k=4$, $D_1=2, D_2=1, D_3=3, D_4=1$, hence $M=3$,
    $S_1=1, S_2=3, S_3=6$.}
  \label{fig:caricature}
\end{figure}

\begin{proof}[Proof of Lemma~\ref{lem:drysitesbd}]
  An immediate consequence of Lemma~\ref{lem:exp-bound-perc} is that
  for every $p>p_c$
  \begin{align}
    \label{est:exp-bound-perc-hd}
    \Pr \bigl(\Z^d\times\{-n\} \to^\omega (0,0)\,  \text{ and } \,
    \Z^d\times\{-\infty\} \not\to^\omega (0,0) \bigr)
    \leq e^{-c_1(p) (n+1)}, \quad n = 0,1,2,\dots,
  \end{align}
  with some $c_1=c_1(p)>0$ satisfying $\lim_{p\nearrow 1}c_1(p) = \infty$.
  To prove \eqref{eq:drysitesbd}, we extend the finite sequence
  $\{t_1,\dots,t_k\}$ to an infinite sequence via
  $t_{k+j} \coloneqq t_k - j$, $j=1,2,\dots$, and put
  \begin{align}
    \label{eq:Di}
    D_i \coloneqq \min \left\{ \ell \in \N :  \Z^d
    \times \{t_{i+\ell}\} \not\to^\omega (x_i, t_i) \right\}.
  \end{align}
  Note that the random variables $D_i$ are upper bounds on the heights
  of the backwards-clusters of open sites attached to $(x_i,t_i)$
  given by (see Figure~\ref{fig:caricature})
  \begin{align*}
    \{(y,m) \in \Z^d \times \Z: m \le t_i, (y,m) \to^\omega
    (x_i,t_i)\}.
  \end{align*}
  For each $(x_i,t_i) \in V$ we have $\eta_{t_i}(x_i) = 0$ if and only
  if $D_i<\infty$. Thus the left-hand side of \eqref{eq:drysitesbd}
  satisfies
\begin{align}
  \Pr\big( \eta_{t}(x)=0 \; \text{for all}\; (x,t) \in V
  \big) = \Pr\Big(\bigcap_{i=1}^k \{D_i<\infty\}\Big).
\end{align}
On the event $\cap_{i=1}^k \{ D_i < \infty \}$ we further define
\begin{align*}
  S_1=1, S_2=S_1+D_{S_1}, \dots, S_{i+1}=S_i+D_{S_i}\quad \text{as long
    as}\; S_i \le k,
\end{align*}
and let $M$ be such that $S_{M-1} \le k < S_M$; see
Figure~\ref{fig:caricature}. For $i=1,\dots,M$ we set
$\widehat{D}_i \coloneqq D_{S_i}$ and $\widehat{D}_i \coloneqq \infty$
for $i>M$. Finally we set
\begin{align}
  \label{eq:Imk}
  \mathcal{I}(m,k)=\{(d_1,\dots,d_m) \in \N^m : d_1+\cdots+d_{m-1} \le
  k < d_1+\cdots+d_m\},
\end{align}
and for $(d_1,\dots,d_m) \in \mathcal{I}(m,k)$ we write
\begin{align*}
u(1)=1, \, u(2)=u(1)+d_1, \, \dots, \, u(m)=u(m-1)+d_{m-1}.
\end{align*}
Then we have
\begin{align}
  \label{eq:di-incl}
  \bigcap_{i=1}^k \{ D_i < \infty \} \subset
  \bigcup_{m=1}^k \bigcup_{ (d_1,\dots,d_m) \in \mathcal{I}(m,k)}
  \left\{ \widehat{D}_1=d_1,\dots, \widehat{D}_m=d_m \right\}.
\end{align}
Note that
  \begin{multline}
    \left\{ \widehat{D}_1=d_1,\dots, \widehat{D}_m=d_m \right\} \\ =
    \bigcap_{j=1}^m \bigl\{\Z^d \times \{t_{u(j)+d_j-1} \}\to^\omega
      (x_{u(j)},t_{u(j)}) \bigr\} \cap \bigl\{\Z^d \times
      \{t_{u(j)+d_j}\}\not\to^\omega
      (x_{u(j)},t_{u(j)}) \bigr\}.
  \end{multline}
  The events in the intersection on the right-hand side depend on
  $\omega $ restricted to disjoint sets and are thus independent.
  Furthermore we observe that the event
  \begin{equation*}
    \bigl\{\Z^d \times \{t_{u(j)+d_j-1} \}\to^\omega
    (x_{u(j)},t_{u(j)})\bigr\} \cap \bigl\{\Z^d \times \{t_{u(j)+d_j}\}\not\to^\omega
    (x_{u(j)},t_{u(j)}) \bigr\}
  \end{equation*}
  enforces that $(x_{u(j)},t_{u(j)})$ is the starting point of a
  finite (backwards) cluster of height at least
  $t_{u(j)+d_j-1}-t_{u(j)} \ge d_j-1$ (when $d_j=1$ this means that
  $\omega(x_{u(j)},t_{u(j)})$ is closed, which also gives a factor
  $1-p < 1$). Hence, using \eqref{est:exp-bound-perc-hd} we obtain
\begin{align}
\label{eq:hatDest}
\begin{split}
\Pr\bigl( & \widehat D_1 =d_1, \dots,\widehat D_m =d_m \bigr) \\
& = \prod_{j=1}^m \Pr\left(\bigl\{\Z^d \times \{t_{u(j)+d_j-1}\}\to^\omega (x_{u(j)},t_{u(j)}), \Z^d \times\{t_{u(j)+d_j}\}\not\to^\omega (x_{u(j)},t_{u(j)}) \bigr\} \right) \\
& \le \prod_{j=1}^m e^{-c_1(p) d_j} = e^{-c_1(p) \sum_{j=1}^m d_j}.
\end{split}
\end{align}
  Now \eqref{eq:hatDest} with \eqref{eq:di-incl} imply
  \begin{multline}
    \label{eq:fin-cl-estim}
    \Pr\big( \eta_{t}(x)=0 \; \text{for all}\; (x,t) \in V \big)
    \le \sum_{m=1}^k \sum_{(d_1,\dots,d_m) \in \mathcal{I}(m,k)} 
    e^{-c_1(p) \sum_{j=1}^m d_j} \\
    = \sum_{m=1}^k \sum_{s=k+1}^\infty 
    e^{-c_1(p) s} \cdot \# \{ (d_1,\dots,d_m) \in \mathcal{I}(m,k) :
    d_1+\cdots+d_m=s \}.
  \end{multline}
  By definition of $\mathcal{I}(m,k)$ for $s \ge k+1$ we have
  \begin{multline}
    \# \{ (d_1,\dots,d_m) \in \mathcal{I}(m,k) : d_1+\cdots+d_m=s \} \\
    = \# \{ (d_1,\dots,d_m) \in \mathcal{I}(m,k) : d_1+\cdots+d_m=k+1
    \} = \binom{k}{m-1} \le 2^k.
  \end{multline}
  Thus, the right hand side of \eqref{eq:fin-cl-estim} can be bounded
  by
  \begin{align}
    2^k \sum_{m=1}^k \sum_{s=k+1}^\infty e^{-c_1(p) s}
    = k 2^k \frac{e^{-c_1(p)(k+1)}}{1-e^{-c_1(p)}},
  \end{align}
  yielding the claim of the lemma.
\end{proof}

\medskip

The second main result of this section, Lemma~\ref{lem:CPconeconv}
below, is the crucial tool in the construction of a certain coupling
which will be useful to forget the positive information about $\eta$
in the regeneration construction. To state this lemma we need to
introduce more notation. For $A\subset \Z^d$ let
$\eta^A=(\eta^A_n)_{n=0,1,\dots}$ be the discrete time contact process
as defined in \eqref{eq:CPA}. For positive $b,s,h$ we write (denoting
by $\Z_+$ the set non-negative integers and by $\norm{\cdot}_2$ the
$\ell^2$-norm)
\begin{equation}
  \label{def:cone}
  \cone(b,s,h) \coloneqq \big\{ (x,n) \in \Z^d \times \Z_+ \, \colon
  \, \norm{x}_2 \leq b + s n, 0 \le n \le h \big\}.
\end{equation}
for a (truncated upside-down) \emph{cone} with base radius $b$, slope
$s$, height $h$ and base point $(0,0)$. Furthermore for
\begin{align}
  \label{eq:bs-inn-out}
  b_\inn \le b_\out \quad \text{and} \quad s_\inn < s_\out,
\end{align}
we define the \emph{conical shell} with inner base radius $b_\inn$,
inner slope $s_\inn$, outer base radius $b_\out$, outer slope
$s_\out$, and height $h \in \N \cup \{\infty\}$ by
\begin{equation}
  \label{eq:coneshell}
  \cs(b_\inn , b_\out , s_\inn , s_\out ,h)
  \coloneqq \big\{ (x,n) \in \Z^d \times \Z  :
  b_\inn  + s_\inn n \le \norm{x}_2 \le
  b_\out  + s_\out n, 0 < n \le h \big\}.
\end{equation}
The conical shell can be thought of as a difference of the \emph{outer
  cone} $\cone(b_\out,s_\out,h)$ and the \emph{inner cone}
$\cone(b_\inn,s_\inn,h)$ with all boundaries except the bottom
boundary of that difference included; see Figure~\ref{fig:cone-cs}.

\begin{figure}
  \centering
  \begin{tikzpicture}[xscale=0.8,yscale=0.8]
   \draw[->,thick] (-8,0) -- (-8,4.55);
   \draw[->,thick] (-11,0) -- (-5,0);
   \draw (-4.8,0.35) node {\footnotesize $\Z^d$};
   \draw (-8.3,4.5)  node {\footnotesize $\Z_+$};
   \draw (-7.75,4.2) node {\footnotesize $h$};

    \draw[thick] (-6.5,0) -- (-5.6,4) -- (-10.4,4) -- (-9.5,0) ;

    \draw (-6.275,1) --(-5.825,1) -- (-5.825,3);
    \draw (-6.05,0.8) node {\footnotesize $s$};
    \draw (-5.7, 1.9) node {\footnotesize $1$};

    \draw (-6.5,0)--(-6.5,-0.1)
          (-9.5,0)--(-9.5,-0.1);

    \draw (-6.5,-0.28) node {\footnotesize $b$};
    \draw (-9.55,-0.28) node {\footnotesize -$b$};


    \draw[thick] (1,0) -- (1.6,4)
                 (1.5,0) -- (2.4,4)
                 (-1,0) -- (-1.6,4)
                 (-1.5,0) -- (-2.4,4);

    \draw[thick] (1,0)--(1.5,0)
                 (-1,0) -- (-1.5,0);

    \draw[thick, dashed] (-1,0) -- (1,0);

   \begin{scope}
     \path [fill=black,opacity=0.2] (1,0) -- (1.6,4) -- (2.4,4) --
     (1.5,0) -- cycle;
     \path [fill=black,opacity=0.2] (-1,0) -- (-1.6,4) -- (-2.4,4) --
     (-1.5,0) -- cycle;
   \end{scope}

   \draw[->,thick] (0,0) -- (0,4.55);
   \draw[->,thick] (-3,0) -- (3,0);
   \draw (3.2,0.35) node {\footnotesize $\Z^d$};
   \draw (-0.3,4.5)  node {\footnotesize $\Z_+$};

  \draw[dashed, thick] (-2.4,4) -- (2.4,4);
  \draw (0.25,4.2) node {\footnotesize $h$};

 \end{tikzpicture}

 \caption{The left figure shows $\cone(b,s,h)$. The grey region in the
   figure on the right (without the bottom line) shows
   $\cs(b_\inn , b_\out , s_\inn , s_\out ,h)$ in $\Z^d \times \Z$.}
  \label{fig:cone-cs}
\end{figure}
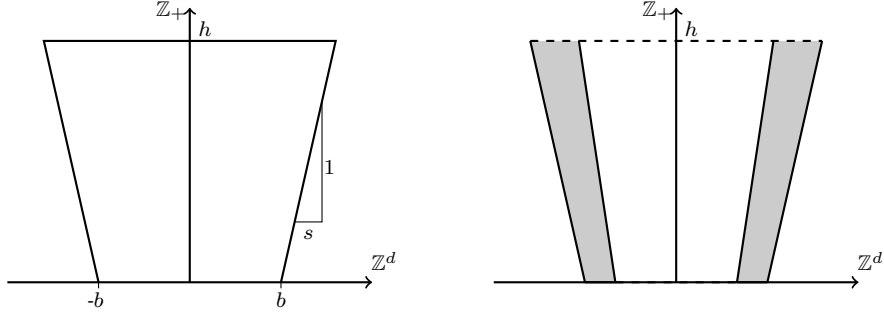

Let $\eta^{\cs} \coloneqq (\eta^\cs_{n})_{n=0,1,\dots}$ be the contact
process in $\cs(b_\inn , b_\out , s_\inn , s_\out , \infty)$ with
initial condition
$\eta^\cs_0(x) = \ind{b_\inn \le \norm{x}_2 \le b_\out}$ and
\begin{align*}
  \eta^\cs _{n+1}(x)
  & =
    \begin{cases}
      1 & \parbox[t]{28em}{if $(x,n+1)\in \mathsf{cs}(b_\inn,
        b_\out, s_\inn, s_\out, \infty)$, $\omega(x,n+1)=1$ and
        $\eta^\cs_n(y)=1$ for some $y \in \Z^d$ with
        $\norm{x-y} \le 1$,\smallskip} \\
      0 & \text{otherwise}.
    \end{cases}
\end{align*}
We think of $\eta^\cs $ as a version of the contact process where all
$\omega$'s outside the conical shell
$\cs (b_\inn , b_\out , s_\inn , s_\out , \infty)$ have been set to
$0$. We say that $\eta^\cs$ \emph{survives (in all parts of the
  conical shell)} if for all $n\in \Z_+$ there is $x\in \Z^d$ with
$\eta^\cs_n(x)=1$. In the case $d=1$ we require additionally that for
all $n \in \Z_+$ there is $x \in \Z_+$ and $y \in \Z_-$ with
$\eta^\cs_n(x) = \eta_n^\cs(y)=1$. (Here $\Z_-$ denotes the
non-positive integers.) For a directed path
\begin{align}
\label{eq:crossingpath}
\gamma = \big( (x_m,m), (x_{m+1},m+1),\dots, (x_{n},n)\big), \quad
  m \le n, \; x_i\in \Z^d \; \text{ with } \; \norm{x_{i-1}-x_i} \le
  1
\end{align}
we say that $\gamma$ \emph{crosses} the conical shell
$\cs(b_\inn , b_\out , s_\inn , s_\out , \infty)$ from the outside to
the inside if the following three conditions are fulfilled:
\begin{enumerate}[(i)]
\item the starting point lies outside the outer cone, i.e.,
  $\norm{x_m}_2 > b_\out + m s_\out$,
\item the terminal point lies inside the inner cone, i.e.,
  $\norm{x_{n}}_2< b_\inn + n s_\inn$,
\item all remaining points lie inside the shell, i.e.,
  $(x_{i},i) \in \cs(b_\inn, b_\out, s_\inn, s_\out, \infty)$ for
  $i=m+1,\dots,n-1$.
\end{enumerate}
We say that $\gamma$ \emph{intersects} $\eta^\cs $ if there exists
$i \in \{m+1,\dots,n-1\}$ with $\eta^\cs _{i}(x_i)=1$. Finally we say
that $\gamma$ is \emph{open in
  $\cs(b_\inn , b_\out , s_\inn , s_\out , \infty)$} if
$\omega(x_i,i)=1$ for all $i=m+1,\dots,n-1$.

Note that if in the case $d=1$ the process $\eta^\cs$ survives in a
ray of a conical shell and $\gamma$ is a path that is open in this ray
then by geometric properties of directed paths and clusters $\gamma$
necessarily intersects $\eta^\cs$.  In
the case $d>1$ however, even if $\eta^\cs$ survives, it is in
principle possible that an open path crosses the conical shell
$\cs(b_\inn , b_\out , s_\inn , s_\out , \infty)$ without intersecting
$\eta^\cs$. The next lemma states that the probability of that can be
made arbitrarily small.

\begin{lemma}
  Assume \label{lem:CPconeconv} that the relations in
  \eqref{eq:bs-inn-out} hold and consider the events
  \begin{align*}
    G_1 & \coloneqq \{ \eta^\cs  \; \text{survives} \}, \\
    G_2 & \coloneqq \{ \text{every open path $\gamma$ that crosses
          $\mathsf{cs}(b_\inn , b_\out ,
          s_\inn , s_\out , \infty)$ intersects $\eta^\cs $} \}.
  \end{align*}
  For any $\varepsilon >0$ and $0 \le  s_\inn < s_\out < 1$
  one can choose $p$ sufficiently close to
  $1$ and $b_\inn < b_\out$ sufficiently large so that
  \begin{align*}
    \Pr(G_1 \cap G_2) \ge 1-\varepsilon.
  \end{align*}
\end{lemma}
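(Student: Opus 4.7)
The plan is a two-scale coarse-graining argument that reduces both $G_1$ and $G_2$ simultaneously to a single assertion about highly supercritical oriented site percolation on a block lattice. Fix a large integer scale $L$ and partition $\Z^d \times \Z_+$ into space-time boxes $B_{\mathbf{i},m} \coloneqq (L\mathbf{i} + [0,L)^d) \times (Lm + [0,L))$. Because $s_\inn < s_\out$, the annular cross-section of the shell at height $n$ has width $(b_\out - b_\inn) + (s_\out - s_\inn) n$, which grows linearly in $n$; by choosing $b_\out - b_\inn$ large relative to $L$ we can ensure that at every height index $m$ the collection of boxes $B_{\mathbf{i},m}$ whose $1$-neighbourhood lies entirely inside $\cs(b_\inn, b_\out, s_\inn, s_\out, \infty)$ is non-empty (it is a topological annulus around the axis in $d\ge 2$, and two slabs in $d=1$). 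Call such boxes \emph{admissible}.

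Declare an admissible box $B$ \emph{good} if $\omega \equiv 1$ on $B$ together with its $1$-neighbourhood. For fixed $L$, a given box is good with probability $p^{C_L}$ for some constant $C_L$ depending on $d$ and $L$, which tends to $1$ as $p \to 1$. Since goodness is a finite-range function of the i.i.d.\ Bernoulli field $\omega$, by the Liggett--Schonmann--Stacey domination theorem the good-box pattern stochastically dominates a Bernoulli site process on the admissible-index lattice with density $\tilde p(p,L)$ that tends to $1$ as $p \to 1$.

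The decisive deterministic observation is that the discrete-time contact process propagates at speed exactly $1$ in sup-norm: if $B$ is a good admissible box and at least one site on the temporally lower face of $B$ carries $\eta^\cs = 1$, then after at most $L$ time steps $\eta^\cs \equiv 1$ throughout $B$. Consequently (a) every directed open path of $\omega$ that traverses $B$ meets $\eta^\cs$ at each of its sites inside $B$, and (b) $\eta^\cs$ emerges from the upper face of $B$ as a full layer of $1$'s, seeding every good admissible box directly above. Starting from the initial configuration $\eta^\cs_0(x) = \ind{b_\inn \le \norm{x}_2 \le b_\out}$, which is supported on every admissible box at height $m=0$, a standard Peierls/contour argument for highly supercritical site percolation on the admissible-index lattice yields, for $\tilde p$ close enough to $1$ and simultaneously: (i) the good boxes form an infinite cluster carrying $\eta^\cs$ upward through the shell, which gives $G_1$; and (ii) no directed path of $\omega$ can stay entirely in bad boxes long enough to cross the shell from outside to inside, which gives $G_2$. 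The joint probability can thus be made at least $1-\varepsilon$ by choosing first $L$ and $b_\out - b_\inn$ large and then $p$ sufficiently close to $1$.

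The main obstacle is geometric: admissibility must be arranged so that at every height the admissible boxes form a sufficiently thick barrier (annular in $d\ge 2$, two separated slabs in $d=1$) uniformly in the shell's linear widening; near the apex at small $n$ this forces $b_\out - b_\inn$ to be large relative to $L$, while at large $n$ it is automatic. A secondary technical point is the $d=1$ case, where survival demands that both rays of the shell carry $\eta^\cs$; this is handled by running the block argument on each ray separately and applying a union bound.
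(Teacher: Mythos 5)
Your strategy (coarse-grain into blocks, Liggett--Schonmann--Stacey domination, Peierls/contour argument) is genuinely different from the paper's, which first proves the $d=1$ case by comparison with survival in an oblique wedge (following Cox--Mari\'c--Schinazi and exploiting that $G_1\subset G_2$ in one dimension), and then handles $d\ge 2$ by coupling backward clusters of candidate path-points to the upper invariant measure and invoking large deviations for the density of $1$'s (Durrett--Schonmann) along finitely many approximating rays through the shell, so that the union bound is over only polynomially many space-time points rather than exponentially many paths.

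There is, however, a genuine gap in your argument for $G_2$, and it sits exactly where the paper flags the $d\ge 2$ difficulty. Showing that no directed $\omega$-path can stay entirely in \emph{bad} boxes long enough to cross the shell does not give $G_2$: a crossing $\omega$-path may avoid $\eta^\cs$ while passing through arbitrarily many \emph{good} boxes, provided those good boxes are \emph{dry}, i.e.\ never reached by $\eta^\cs$ because a finite cap of bad boxes shields them from the wet region below. For a good box at block-level $k$, the probability of being dry is of order $(1-\tilde p)^{O(1)}$, not decaying in $k$; so dry good boxes occur at macroscopic density, and a crossing path may string together small dry pockets. Your contour estimate controls the bad set only. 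To close the gap one would need a Peierls estimate for the \emph{dry-or-bad} set, but that set is not a finite-range function of the underlying Bernoulli field (so the LSS step cannot be applied to it directly), and controlling its long-range dependence is essentially the content the paper supplies via the backward-cluster coupling and density-of-ones machinery. A secondary overstatement: your claim (a) that a single seed on the lower face of a good box $B$ makes $\eta^\cs\equiv 1$ ``throughout $B$'' so that ``every directed open path \ldots meets $\eta^\cs$ at each of its sites inside $B$'' is too strong. The seed needs up to $L$ steps to fill the cross-section, so $\eta^\cs$ is guaranteed dense only near the top face of $B$; an open path that traverses only the lower time slices of $B$ can still miss $\eta^\cs$ entirely, so one must also argue that the crossing path actually lingers in a wet good box long enough.
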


\begin{remark}[Observations concerning $G_1$ and $G_2$]
  The \label{rem:obsG1G2} meaning of the event $G_1$ is clear. Let us
  just note that it is essential that the relations in
  \eqref{eq:bs-inn-out} hold. In particular, in the case $s_\inn =
  s_\out$ survival of $\eta^\cs$ is only possible in the trivial case
  $p=1$.

  To understand the importance of the event $G_2$, observe that if a
  path $\gamma$ as in \eqref{eq:crossingpath} crosses and is open in
  $\cs(b_\inn, b_\out, s_\inn, s_\out, \infty)$, and also intersects
  $\eta^\cs$ then necessarily $\eta^\cs_n(x_n) =\omega(x_n,n)$ for the
  terminal point $(x_n,n)$ of the path. Thus, on $G_1 \cap G_2$ the
  values of the contact process inside the inner cone, that is for
  $(x,n)$ with $\norm{x}_2< b_\inn + n s_\inn$, are independent of
  what happens outside of the shell; cf.\ \eqref{def:goodconshells}
  and Lemma~\ref{lem:goodconshells}.
\end{remark}

\begin{proof}[Proof of Lemma~\ref{lem:CPconeconv}]
  The proof consists of two steps. In the first step we prove the
  assertion for the case $d=1$. The second step then uses the
  assertion for $d=1$ to give a proof for $d \ge 2$.

  Throughout the proof of this lemma for $r >0$ and $x \in \Z^d$ we
  denote by $B_r(x)$ the closed $\ell^2$ ball of radius $r$ around
  $x$, i.e., $B_r(x) = \{y \in \Z^d : \norm{x - y}_2\le r \}$.

  \medskip

  \noindent
  \textit{Step~1.} Consider the case $d=1$. We first check that the
  discrete time contact process survives with high probability in any
  \emph{oblique cone} when $p$ is large enough. To this end  for
  $0 < s_1 < s_2 < 1$ and $b \in \N$ we set
  \begin{align*}
    \mathsf{C}_{b,s_1,s_2} \coloneqq \big\{ (x,n) : x \in \Z,\; n \in
    \Z_+, \; s_1 n \le x \le s_2n + b \big\}.
  \end{align*}
  Furthermore we let $\bar\eta \coloneqq (\bar\eta_n)_{n =0,1,\dots}$
  be the discrete time contact process in $\mathsf{C}_{b,s_1,s_2}$
  starting from $\bar\eta_0 = \indset{[0,b] \cap \Z}$ and with all
  $\omega$'s outside $\mathsf{C}_{b,s_1,s_2}$ set to $0$. Finally, set
  $x_n=\lfloor b/2+n (s_1+s_2)/2 \rfloor$, $r_n=n(s_2-s_1)/4$.

  \begin{claim}
    (i) For every $0 < s_1 < s_2 < 1$ and $\varepsilon>0$ there is $b$
    large and $p_0 < 1$ such that for all $p\ge p_0$, $\bar\eta$
    survives with probability at least $1-\varepsilon$.

    (ii) Moreover, there exist $c, C \in (0,\infty)$ so that on the event
    $\{\bar \eta \text{ survives}\}$, with probability at least
    $1-Ce^{-cn}$, $\bar\eta_n$ restricted to the ball $B_{r_n}(x_n)$
    can be coupled with the unrestricted process
    $\eta^\nu=(\eta_n^\nu)_{n=0,1,\dots}$ started from the upper
    invariant measure $\nu$, that is $\bar \eta_n(x)=\eta_n^\nu (x)$
    for all $x\in B_{r_n}(x_n)$.
  \end{claim}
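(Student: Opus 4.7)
The plan is to combine a block renormalization argument (for survival in the oblique cone in part (i)) with the exponential coupling of Lemma~\ref{lem:DG82coupl} (for the agreement with $\eta^\nu$ in the bulk in part (ii)).

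For part (i), I would compare $\bar\eta$ with supercritical oriented percolation via a renormalization along the cone's axis, in the spirit of \cite{Dur84, BCDG13}. The key input is that the asymptotic edge speed $\alpha(p)$ of the free discrete-time contact process satisfies $\alpha(p) \to 1$ as $p \to 1$, hence exceeds $\max(s_1, s_2)$ for $p$ close to $1$. Concretely, partition space-time into blocks of side length $L$ arranged along the slope $(s_1+s_2)/2$ and declare a block \emph{good} if an interval of infected sites of length $\ge L/2$ at its base deterministically produces a similar interval at its top while staying inside the local cone slice. For $L$ large and $p$ close to $1$, the probability of a good block can be made arbitrarily close to $1$, and a standard $k$-dependent percolation comparison shows that good blocks dominate a supercritical oriented percolation. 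Starting $\bar\eta$ from $\indset{[0,b] \cap \Z}$ with $b$ sufficiently large provides enough initial good seeds on the bottom row, so that the renormalized process, and hence $\bar\eta$, survives with probability $\ge 1 - \varepsilon$.

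For part (ii), pick $\theta \in (0, (s_2 - s_1)/[4(1+s_2)])$ and set $n_1 := n - \lfloor \theta n \rfloor$. A direct linear calculation shows that the backward light cone (of slope $1$) emanating from $B_{r_n}(x_n)$ down to time $n_1$ is contained in the cone slice $[s_1 k, s_2 k + b]$ of $\mathsf{C}_{b,s_1,s_2}$ at every intermediate time $k \in [n_1, n]$. Hence $\bar\eta$ and $\eta^\nu$, both driven by the same $\omega$ inside the cone, evolve identically on the space-time region swept by this backward light cone, provided they coincide at its time-$n_1$ base. To establish the latter agreement, use the following ``oblique shape'' ingredient: on $\{\bar\eta \text{ survives}\}$, with probability $\ge 1 - Ce^{-cn}$ the configuration $\bar\eta_{n_2}$ at an earlier time $n_2 := \lfloor n_1/2 \rfloor$ contains a seed $y^* \in B_{r_{n_2}}(x_{n_2})$ (from the block argument of part (i)); applying Lemma~\ref{lem:DG82coupl} to an unrestricted contact process from $(y^*, n_2)$ couples it with $\eta^\nu$ in the ball $B_{s_{\mathrm{coupl}}(n_1 - n_2)}(y^*)$ at time $n_1$. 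For $p$ sufficiently close to $1$, one has $s_{\mathrm{coupl}}$ close to $1$ (by the same shape theorem), so this ball contains the required time-$n_1$ base, and monotonicity of the contact process then yields $\bar\eta_{n_1} = \eta^\nu_{n_1}$ on the base with failure probability $\le Ce^{-cn}$.

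The main obstacle is the block renormalization in the oblique geometry of part (i): both cone boundaries drift at distinct speeds $s_1 \ne s_2$, so the renormalized grid must be aligned with the cone's axis rather than with the space-time coordinate axes, and good blocks must be shown to propagate along that axis. A secondary technical point is tracking the dependence of $s_{\mathrm{coupl}}$ on $p$ in Lemma~\ref{lem:DG82coupl}: one needs $s_{\mathrm{coupl}}(p) \to 1$ as $p \to 1$, which follows from the same shape theorem used in part (i) but is not explicitly stated in Lemma~\ref{lem:DG82coupl}.
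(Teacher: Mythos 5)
Your overall approach matches the paper's: the paper also proves (i) by a coarse-graining/block-renormalization comparison with supercritical oriented percolation inside the cone (it simply cites Theorems~1 and~2 of Cox--Mari\'c--Schinazi for this), and for (ii) it says that combining that coarse-graining with Lemma~\ref{lem:DG82coupl} yields the coupling. So the ingredients are the same.

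However, your implementation of (ii) has a genuine gap at the step where you pass from the seed $(y^*, n_2)$ to the time-$n_1$ base. You apply Lemma~\ref{lem:DG82coupl} to an \emph{unrestricted} contact process started from $(y^*,n_2)$ and then say ``monotonicity of the contact process then yields $\bar\eta_{n_1}=\eta^\nu_{n_1}$ on the base.'' Monotonicity gives $\bar\eta\le\eta^\nu$, and, since $\bar\eta_{n_2}(y^*)=1$, it also gives that $\bar\eta_{n_1}$ dominates the \emph{restricted} contact process started from $\{y^*\}$ at time $n_2$ (the one using only $\omega$'s in $\mathsf C_{b,s_1,s_2}$). But it does \emph{not} give that $\bar\eta_{n_1}$ dominates the unrestricted process $\eta^{\{y^*\}}_{n_1}$ that Lemma~\ref{lem:DG82coupl} couples to $\eta^\nu$, because the unrestricted process may reach the base via open paths leaving the cone. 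With your choice $n_2=\lfloor n_1/2\rfloor$, a backward light-cone containment cannot close this gap: over time $n_1-n_2=n_1/2$ the slope-$1$ light cone widens by $n_1/2$, whereas the half-width of $\mathsf C_{b,s_1,s_2}$ at level $n_2$ is only about $(s_2-s_1)n_2/2+O(1) = (s_2-s_1)n_1/4 + O(1) < n_1/4$, so the backward light cone of the base inevitably spills outside the cone. Hence the equality of the restricted and unrestricted processes at time $n_1$ on the base is precisely what needs to be established and cannot be dispatched by ``monotonicity.'' The paper sidesteps this by performing the coupling step \emph{on the renormalized level}: in the coarse-grained picture the dominating supercritical oriented percolation lives \emph{inside} the cone by construction, and combining the block-level version of Lemma~\ref{lem:DG82coupl} with that coarse-graining gives the claimed agreement without ever comparing to an unrestricted process whose relevant paths might exit the cone. (Your flagged ``secondary'' point, that one needs $s_{\mathrm{coupl}}(p)\to 1$ as $p\to 1$, is also a real issue, but it is less serious than the restricted-vs-unrestricted mismatch above.)
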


  The claim (i) follows using the same arguments as in the proof of
  Theorems~1 and 2 of \cite{CoxMaricSchinazi2010}, where an analogous
  statement is proved for continuous time contact process in a wedge.
  Moreover, \cite{CoxMaricSchinazi2010} use for the proofs a
  coarse-graining construction and comparison with oriented
  percolation. That links the problem in the wedge with a suitable
  shift of the contact process $\eta^{\{0\}}$ used in
  Lemma~\ref{lem:DG82coupl}. Combining that lemma with coarse-graining
  then yields the claim (ii).

  As noted in the paragraph above Lemma~\ref{lem:CPconeconv} we have
  $G_1 \subset G_2$ in the case $d=1$. Thus, by using the above
  argument for the oblique cone twice we see that the assertion of the
  lemma holds in the case $d=1$.

  \medskip

  \noindent
  \textit{Step~2}. Consider now $d>1$. Since the probability of $G_1$
  is increasing in dimension, it can be bounded from below by the same
  reasoning as in $d=1$. It remains to show that the probability of
  $G_2^c$ can be made small by choosing $b_\inn$, $b_\out$ and $p$
  appropriately.

  To this end for $n\ge 0$ we set
  $d(n)=(b_\inn+b_\out)/2 + n (s_\inn+s_\out)/2$ and define
  \begin{align*}
    \mathcal M_n \coloneqq \bigl\{x\in \mathbb Z^d: \norm{x}_2\in [d(n),d(n)+1]\bigr\}.
  \end{align*}
  Furthermore for $x\in \mathcal M_n$ we consider the event
  \begin{align*}
    \mathcal B_n(x) \coloneqq  \bigl\{\eta^\cs_n(x)=0 \; \text{ and }
    (x,n) \text{ contained in an open path } \gamma \text{ crossing
    the conical shell} \bigr\}.
  \end{align*}
  Finally, we define the backward cluster of $(x,n)$ by
  \begin{align*}
    \mathsf{bC}(x,n) & \coloneqq \big\{ (y,m) \in \Z^d \times \Z : m
                       \le n, (y,m) \to^\omega (x,n) \big\},
    \\ \intertext{and for $m \le n$  we set}
    \mathsf{bC}_m(x,n) & \coloneqq \{ y \in \Z^d : (y,m) \in \mathsf{bC}(x,n)\}.
  \end{align*}
  Assume that $\mathcal B_n(x)$ occurs. Then, by trivial geometrical
  arguments there is a small constant $\rho$ depending on the
  parameters of the shell so that for
  $M=\lfloor (1-\rho)n\rfloor$ the set $\mathsf{bC}_M(x,n)$ is
  not empty. Moreover, self-duality of the contact process and
  Lemma~\ref{lem:DG82coupl} imply that there exist
  $s_{\mathrm{cpl}}>0$ (here we think of
  $s_{\mathrm{cpl}}\approx s_{\mathrm{coupl}} \rho$ for
  $s_{\mathrm{coupl}}$ from Lemma~\ref{lem:DG82coupl}) and $c>0$ such
  that
  $B_{s_{\mathrm{cpl}}n}(x)\times \{M\} \subset
  \cs(b_\inn,b_\out,s_\inn,s_\out,\infty)$ and
  \begin{equation}
    \label{e:clustercoupling}
    \parbox{0.8\textwidth}{with probability bounded below by $1-e^{-c n}$, the
      indicator function of the set $\mathsf{bC}_M(x,n)$
      can  be coupled inside $B_{s_{\mathrm{cpl}} n}(x)$ with the set of
      $1$'s under the upper invariant measure $\nu$ of the
      (full) contact process.}
  \end{equation}
  Fix $p$ large enough so that the density of $1$'s under $\nu$ is
  strictly larger than $1/2$; this is not a restriction in the
  parameter region that we consider. On the one hand this requirement
  means heuristically that the set
  $\mathsf{bC}_M(x,n) \cap (B_{s_{\mathrm{cpl}}n}(x)\times\{M\})$ is
  large with high probability. Thus we must have $\eta^{\cs}_M(z)=0$
  for all $(z,M)$ in this set. On the other hand, using
  $d=1$-arguments we will show that this is not possible.

  To this end, depending on the previous parameters, we fix $\delta>0$
  small, and unit vectors $v^i\in \mathbb R^d$, $1\le i\le N$, with
  $N$ sufficiently large so that for every $x\in \mathcal M_n$, there
  is an $i\le N$ such that
  \begin{equation}
    \label{eq:longinter}
    \parbox{0.8\textwidth}
    {the length of the intersection of the half-line $\{t v^i:t\ge 0\}$ with the
      (real) ball $\{y\in \mathbb R^d:\norm{x-y}_2\le s_{\mathrm{coupl}}n\}$
      has length at least $\delta n$.}
  \end{equation}
  Observe that $N$ and $v^i$'s can be chosen independently of $n$. For
  $i\le N$,  let $\alpha^i = (\alpha^i(j))_{j\in\N}$ be a self-avoiding
  nearest neighbour path in $\Z^d$ approximating the half-line
  $\{tv^i:t\ge 0\}$ given by
  \begin{enumerate}[(a)]
  \item $\alpha^i_0 = 0$,
  \item $\alpha^i$ makes steps only in direction of $v^i$, that is for
    every coordinate $k=1,\dots, d$ and $j\ge 0$ one has
    $v^i_k (\alpha^i_k(j+1)-\alpha^i_k(j))>0$,
  \item $\alpha^i$ stays close to $tv^i$, that is
    $\{ \alpha^i_j : j \in \N \} \subset \{ t v^i + z : t \ge 0, z \in
    \R^d, \norm{z} \le 2 \}$.
  \end{enumerate}

  Using \eqref{e:clustercoupling}, \eqref{eq:longinter} and large
  deviation estimates for the density of $1$'s under $\nu^{(1)}$, see
  \cite[Thm.~1]{DS88} (literally, proved there for the continuous-time
    contact process) we see that there is $c>0$ such that that for
    $i\le N$ satisfying \eqref{eq:longinter},
  \begin{equation}
    \label{eq:DS88Thm1}
    \begin{split}
      \Pr\bigg(\frac{|\alpha^i\cap B_{s_{\mathrm{cpl}}n}(x) \cap \mathsf{bC}_M(x,n)|}
      {|\alpha^i\cap B_{s_{\mathrm{cpl}}n}(x) |} > 1/2
      \,\bigg| \,\mathcal B_n(x)\bigg) \ge 1- ce^{-cn}.
    \end{split}
  \end{equation}

  We now use the result of Step~1 and the last claim to bound the
  probability of $\mathcal B_n(x)$. To this end we define contact
  process $\eta^{(i)}$ as the contact process restricted to the set
  $\mathcal W^i \coloneqq (\alpha^i \times \Z_+)\cap
  \cs(b_\inn,b_\out,s_\inn,\infty)$
  started from $\indset{\mathcal W^i\cap (\mathbb Z^d \times \{0\})}$.
  Observe that $\mathcal W^i$ contains an isomorphic image of
  $\mathsf{C}_{b,s_1,s_2}$, for some $b, s_1,s_2$. Thus, the contact
  process $\eta^{(i)}$ dominates a corresponding image of a contact
  process in $\mathsf{C}_{b,s_1,s_2}$.

  Let $\mathcal S$ defined be the event
  $ \{\eta^{(i)} \text{ survives for every } i\le N \}$. Given
  $\varepsilon >0$, choose $\varepsilon'$ so that
  $(1-\varepsilon ')^N \ge 1- \varepsilon /2$. Using the result of
  Step~1 with $\varepsilon'$ replacing $\varepsilon$, we see that
  \begin{equation}
    \mathbb P(\mathcal S)\ge 1- \varepsilon /2.
  \end{equation}
  Moreover, on $\mathcal S$, for suitable $x_M^{(i)}$ with probability
  at least $1-e^{-cM}$, $\eta^{(i)}$ can be coupled in
  $B_{r_M}(x_M^{(i)})\times \{M\}$ to a stationary contact process.
  Moreover, the parameters can be chosen so that for every
  $x\in \mathcal M_n$, there is $x_M^{(i)}$ for which
  $B_{s_{\mathrm{cpl}}n}(x)\cap \alpha^i \subset B_{r_M}(x_M^{(i)})$.
  Hence, using again large deviation arguments for the
  density of $1$'s we obtain
  \begin{equation}
    \begin{split}
      \Pr\bigg( \frac
        {|\{ y \in \alpha^i\cap B_{s_{\mathrm{coupl}}n}(x)
            : \eta^{(i)}_M(y)=1\}|}
        {| \alpha^i\cap B_{s_{\mathrm{coupl}}n}(x)|}
        > 1/2
        \,\bigg|
        \,\mathcal S\bigg) \ge 1- ce^{-cn}.
    \end{split}
  \end{equation}
  Comparing \eqref{eq:DS88Thm1} with the last display, we see that  for
  every $x\in \mathcal M_n$,
  \begin{equation}
    \mathbb P\bigl(\mathcal B_{n}(x) | \mathcal S\bigr)\le e^{-cn}.
  \end{equation}
  Assume that $G_2^c$ occurs. Since there are at most polynomially
  many $x\in \mathcal M_n$, there must be
  $(x,n) \in \cup_{ \ell \ge 1} (\mathcal M_\ell \times\{\ell\})$ so
  that $\mathcal B_n(x)$ occurs. It follows that for $p$ sufficiently
  large we have
  \begin{equation}
    \mathbb P(G_2|\mathcal S)\ge 1-\varepsilon /2
  \end{equation}
  and therefore $\mathbb P(G_1\cap G_2)\ge 1-\varepsilon$, as required.
\end{proof}

\subsection{Regeneration construction}
\label{subs:regen-model1}

In Theorem~\ref{thm:LLNuCLTmodel1} we claim that the speed of the
random walk $X$ is $0$. As an intermediate result we will show that
the speed is bounded by a small constant. This will be needed for the
regeneration construction.

\begin{lemma}[A priori bound on the speed of the random walk]
  If the \label{lem:aprioribd} the family of kernels $\kappa$
  satisfies Assumptions~\ref{ass:local}--\ref{ass:finite-range} then
  there are positive finite constants $s_{\mathrm{max}}$, $c$ and $C$
  so that
  \begin{align}
    \label{eq:apr-bound}
    \Pr\big( \norm{X_n} > s_{\mathrm{max}} n \big) \leq C e^{-c n} ,
    \quad  n\in \N,
  \end{align}
  in particular
  $\limsup_{n\to\infty} \norm{X_n}/n \le s_{\mathrm{max}}$ almost
  surely. The bound $s_{\mathrm{max}}$ can be chosen arbitrarily small
  by taking $\varepsilon_\rf \ll 1$ (where $\varepsilon_\rf$ is from
  Assumption~\ref{ass:appr-sym}) and $1-p \ll 1$.
\end{lemma}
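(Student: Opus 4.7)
The plan is to write $X_n = \sum_{k<n}(X_{k+1}-X_k)$ and to realize each increment as close to a draw from the symmetric reference kernel $\kappa_\rf$ whenever the walker sits on a site with $\eta_{-k}(X_k)=1$. On a suitably enlarged probability space I couple each step $Z_k \coloneqq X_{k+1}-X_k$ with a fresh sequence $(Y_k)$ of i.i.d.\ draws from $\kappa_\rf$ via the maximal coupling, so that for $\mathcal{F}_k$ the $\sigma$-algebra of everything up through time $k$ and $A_k \coloneqq \{Z_k = Y_k\}$, we have $\Pr(A_k^c \mid \mathcal{F}_k) \le \varepsilon_\rf$ whenever $\eta_{-k}(X_k) = 1$ by Assumption~\ref{ass:appr-sym}. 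Putting $\mathcal{B}_0 \coloneqq \{k<n : \eta_{-k}(X_k)=0\}$, $\mathcal{B}_1 \coloneqq \{k<n : \eta_{-k}(X_k)=1,\, A_k^c\}$ and $\mathcal{B} \coloneqq \mathcal{B}_0 \cup \mathcal{B}_1$, we can write
\begin{align*}
X_n = \sum_{k<n} Y_k + \sum_{k\in \mathcal{B}} (Z_k - Y_k), \qquad \norm{X_n} \le \Norm{\sum_{k<n} Y_k} + (R_\rf + R_\kappa)\,|\mathcal{B}|,
\end{align*}
using $\norm{Y_k} \le R_\rf$ from the range of $\kappa_\rf$ and $\norm{Z_k} \le R_\kappa$ from Assumption~\ref{ass:finite-range}.

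I then control the three quantities separately. By symmetry of $\kappa_\rf$ the $Y_k$ are i.i.d., centred and uniformly bounded, so Azuma--Hoeffding gives $\Pr(\norm{\sum_{k<n} Y_k} > \delta n) \le C e^{-c\delta^2 n}$ for any $\delta>0$. Since conditionally on $\mathcal{F}_k$ the Bernoulli indicator $\ind{k \in \mathcal{B}_1}$ has parameter at most $\varepsilon_\rf$, the size $|\mathcal{B}_1|$ is stochastically dominated by a $\mathrm{Bin}(n,\varepsilon_\rf)$ random variable, so a standard Chernoff estimate yields $\Pr(|\mathcal{B}_1| > 2\varepsilon_\rf n) \le e^{-cn}$.

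The main obstacle is $|\mathcal{B}_0|$: the walker's positions depend on $\eta$, so the visited sites are not independent of the environment. I plan to handle this by a crude deterministic union bound. By Assumption~\ref{ass:finite-range} there are at most $(2R_\kappa+1)^{dn}$ possible trajectories $\gamma = (\gamma_0, \dots, \gamma_{n-1})$ that $X$ could follow, and for each fixed such $\gamma$ the time coordinates $-k$ are all distinct, so Lemma~\ref{lem:drysitesbd} together with a union bound over subsets $S \subset \{0, \dots, n-1\}$ of cardinality $\lceil \delta n\rceil$ gives
\begin{align*}
\Pr\bigl(|\{k<n : \eta_{-k}(\gamma_k)=0\}| > \delta n\bigr) \le \binom{n}{\lceil \delta n\rceil}\varepsilon(p)^{\lceil \delta n \rceil} \le \bigl(2\varepsilon(p)^\delta\bigr)^n.
\end{align*}
Combining with the union bound over trajectories,
\begin{align*}
\Pr\bigl(|\mathcal{B}_0| > \delta n\bigr) \le \bigl(2(2R_\kappa+1)^d \varepsilon(p)^\delta\bigr)^n,
\end{align*}
which decays exponentially as soon as $p$ is close enough to $1$ that $2(2R_\kappa+1)^d \varepsilon(p)^\delta < 1$; this is achievable since $\varepsilon(p) \to 0$ as $p \nearrow 1$ by Lemma~\ref{lem:drysitesbd}.

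Combining the three bounds, $\norm{X_n} \le \delta n + (R_\rf + R_\kappa)(\delta + 2\varepsilon_\rf) n$ with probability at least $1 - C e^{-cn}$, so one may take $s_{\max} \coloneqq \delta + (R_\rf + R_\kappa)(\delta + 2\varepsilon_\rf)$, which can be made arbitrarily small by first choosing $\delta$ and $\varepsilon_\rf$ small and then $p$ sufficiently close to $1$. The almost-sure upper bound $\limsup_n \norm{X_n}/n \le s_{\max}$ then follows from Borel--Cantelli. The crude factor $(2R_\kappa+1)^{dn}$ in the union bound is the price paid for decoupling $X$ from $\eta$; it is tolerable because $\varepsilon(p)^\delta$ from Lemma~\ref{lem:drysitesbd} can be driven to zero freely.
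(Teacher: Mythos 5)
Your argument is correct and rests on the same two ingredients as the paper's proof, namely Lemma~\ref{lem:drysitesbd} for the rarity of dry sites together with a union bound over the at most $(2R_\kappa+1)^{dn}$ trajectories allowed by Assumption~\ref{ass:finite-range}, but the packaging differs. You perform a step-by-step maximal coupling of the true increments $Z_k$ with an auxiliary i.i.d.\ $\kappa_\rf$-sequence $(Y_k)$ and decompose $X_n=\sum_{k<n}Y_k+\sum_{k\in\mathcal{B}}(Z_k-Y_k)$, then control the centred i.i.d.\ sum by Hoeffding, $|\mathcal{B}_1|$ by binomial domination, and $|\mathcal{B}_0|$ by the trajectory union bound. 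The paper instead estimates $\Pr(H_n\ge\delta n)$ directly for $H_n=|\mathcal{B}_0|$ (keeping a factor $K^{n-k}$ for the walk probability at the $n-k$ wet steps, which you drop; this is harmless since the union bound still converges for $p$ close to $1$), and then dominates each coordinate $X_{n,1}\le\tilde X_{n-H_n}+R_\kappa H_n$ by a fixed one-dimensional comparison walk $\tilde X$ carrying a small deterministic drift $\varepsilon_\rf R_\kappa$; a large-deviation estimate for $\tilde X$ then finishes the argument coordinatewise. Your additive decomposition is arguably cleaner, since the $\sum Y_k$ part is exactly centred by the symmetry of $\kappa_\rf$ and no drift needs to be absorbed into $s_{\max}$, at the modest cost of the separate $\mathcal{B}_1$ estimate. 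Both routes make $s_{\max}=O(\delta+(R_\rf+R_\kappa)(\delta+\varepsilon_\rf))$ arbitrarily small as $\delta,\varepsilon_\rf\to 0$ with $p$ close to $1$, and both conclude via Borel--Cantelli.
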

\begin{proof}
  With the percolation interpretation in mind, we say that a
  space-time site $(x,k)$ is \emph{wet} if $\eta_k(x)=1$, and
  \emph{dry} if $\eta_k(x)=0$. Let $\Gamma_n$ be the set of all
  $n$-step paths $\gamma$ on $\Z^d$ starting from $\gamma_0=0$ with
  the restriction $\norm{\gamma_{i}-\gamma_{i-1}} \le R_\kappa$,
  $i=1,\dots,n$, where $R_\kappa$ is the range of the kernels
  $\kappa_n$ from Assumption~\ref{ass:finite-range}. For $\gamma \in
  \Gamma_n$ and $0 \le i_1 < i_2 \dots < i_k \le n$ we define
  \begin{align*}
    D^\gamma_{i_1,\dots,i_k}
    & \coloneqq \{ \eta_{-\ell} (\gamma_{\ell}) =0 \text{ for all } \ell
      \in\{ i_1,\dots,i_k\}\},\\
    W^\gamma_{i_1,\dots,i_k}
    & \coloneqq \{ \eta_{-\ell} (\gamma_{\ell}) =1 \text{ for all } \ell \in
      \{1,\dots,n\} \setminus \{i_1,\dots,i_k\}\}.
  \end{align*}
  Let $H_n \coloneqq \#\{ 0 \le i \le n : \eta_{-i}(X_i)=0\}$ be the
  number of dry sites the walker visits up to time $n$ and set $K
  \coloneqq \max_{x \in \Z^d}\{\kappa_\rf(x)\}
  +\varepsilon_\rf$. For $k \in \{1,\dots,n\}$ by
  Lemma~\ref{lem:drysitesbd} we have
  \begin{align}
    \label{eq:Hnk0}
    \begin{split}
      \Pr(H_n = k)
      & = \sum_{0 \le i_1 < \dots < i_k \le n} \sum_{\gamma \in
        \Gamma_n} \Pr\bigl( (X_0,\dots,X_n) =\gamma,
      W^\gamma_{i_1,\dots,i_k},  D^\gamma_{i_1,\dots,i_k} \bigr) \\
      & \le \sum_{0 \le i_1 < \dots < i_k \le n} \sum_{\gamma \in
        \Gamma_n} K^{n-k} \Pr\bigl(D^\gamma_{i_1,\dots,i_k} \bigr) \\
      & \le \sum_{i_1 < i_2 < \dots < i_k \le n} R_\kappa^{d n}
      K^{n-k} \varepsilon(p)^k = \binom n  k R_\kappa^{d n} K^{n-k}
      \varepsilon(p)^k.
    \end{split}
  \end{align}
  It follows
  \begin{align}
    \label{eq:Hnk}
    \begin{split}
      \Pr(H_n \ge \delta n) & \le \sum_{k=\lfloor n \delta \rfloor}^n
      \binom{n}{k} R_\kappa^{d n} K^{n-k} \varepsilon(p)^k \\
      & \le \left( 2 R_\kappa^d K \right)^{n} \sum_{k=\lfloor n \delta
        \rfloor}^\infty \left(\varepsilon(p)/K\right)^k =
      \left(2 R_\kappa^d K \right)^{n}
      \frac{\left(\varepsilon(p)/K\right)^{\delta
          n}}{1-\varepsilon(p)/K} \le c_1 e^{-c_2 n}
    \end{split}
  \end{align}
  with $c_1, c_2 \in (0, \infty)$, when $\delta>0$ is sufficiently
  small and $p \ge p_0=p_0(\delta,\varepsilon_\rf)$.

 \smallskip

  Writing $X_n=(X_{n,1},\dots,X_{n,d})$ we can couple the first
  coordinate $(X_{n,1})_{n=0,1,\dots}$ of the random walk $X$ with
  a one-dimensional random walk
  $\wt{X} = (\wt{X}_n)_{n=0,1,\dots}$ with transition
  probabilities given by
  \begin{align*}
  \Pr\left(\wt{X}_n-\wt{X}_{n-1} = x \right) =
  (1-\varepsilon_\rf) \sum_{(x_2,\dots,x_d) \in \Z^{d-1}}
    \kappa_\rf\big(0, (x,x_2,\dots,x_d) \big)
  + \varepsilon_\rf \delta_{R_\kappa}(x), \quad x \in \Z
  \end{align*}
  (i.e., $\wt{X}$ takes with probability $1-\varepsilon_\rf$ a
  step according to the projection of $\kappa_\rf$ on the first
  coordinate and with probability $\varepsilon_\rf$ simply a step of
  size $R_\kappa$ to the right) such that for all $n\in\N$
  \begin{align*}
    X_{n,1} \le \wt{X}_{n-H_n} +R_\kappa H_n.
  \end{align*}
  Then, we have
  \begin{align}
    \label{eq:Xn1bars}
    \begin{split}
      \Pr\left(X_{n,1} > \bar{s} n \right) & \le \Pr\left(H_n \ge
        \delta n\right) + \sum_{k=0}^{\lfloor n
        \delta \rfloor} \Pr\left(X_{n,1} > \bar{s} n, H_n = k\right) \\
      & \le \Pr\left(H_n \ge \delta n\right) + \sum_{k=0}^{\lfloor n
        \delta \rfloor} \Pr\left( \wt{X}_{n-k} > \bar{s} n - k
        R_\kappa\right)\\
      & \le \Pr\left(H_n \ge \delta n\right) + \delta n
      \Pr\left(\wt{X}_n > (\bar{s}- \delta R_\kappa) n \right).
  \end{split}
  \end{align}
  The estimates \eqref{eq:Hnk}, \eqref{eq:Xn1bars} and standard large
  deviations bounds for $\wt X$ show that
  \begin{align}
    \Pr\left(X_{n,1} > \bar{s} n \right) \le c_3 e^{-c_4 n}
    \quad \text{holds for all} \; n \in \N
  \end{align}
  with $c_3, c_4 \in (0,\infty)$ when
  $\bar{s}- \delta R_\kappa
  > \E[\wt{X}_1] = \varepsilon_\rf R_\kappa$. By symmetry, we
  have an analogous bound for $\Pr\left(X_{n,1} < -\bar{s} n \right)$.
  The same reasoning applies to the coordinates
  $X_{n,2},\dots,X_{n,d}$. Thus, we have
  \begin{align}
    \Pr\left(\norm{X_n} > \bar{s} n \right) \le \sum_{i=1}^d
    \Pr\left(\abs{X_{n,i}} > \bar{s} n \right) \le 2d c_3 e^{-c_4 n}.
  \end{align}
  In particular we have
  $\limsup_{n\to\infty} \norm{X_n}/n \le \bar{s}$ a.s.\ by the
  Borel-Cantelli lemma.
\end{proof}

Denote the $R_\loc$-tube around the first $n$ steps of the path by
\begin{align}
  \label{eq:tuben}
  \mathsf{tube}_n \coloneqq \{ (y,-k) : 0 \le k \le n, \norm{y-X_k}
  \le R_\loc \}.
\end{align}
For $(x,n) \in \Z^d\times \Z$ let $\ell(x,n)$ be the length of the
longest (backwards in time) directed open path starting in $(x,n)$
with the convention $\ell(x,n)=-1$ if $\omega(x,n)=0$ and
$\ell(x,n)=\infty$ if $\eta_n(x)=1$. For each $(x,n)$ we define its
\emph{determining triangle} by
\begin{align}
  \label{eq:Dxn}
  D(x,n) \coloneqq \begin{cases} \emptyset, & \text{if} \; \eta_n(x) = 1, \\
    \{ (y,m) : \norm{y-x} \le (n-m), \, n-\ell(x,n)-1 \leq m \leq n \},
    & \text{if} \; \eta_n(x) = 0.
  \end{cases}
\end{align}
The idea is that if $\eta_n(x) = 0$, i.e.\ $(x,n)$ is not connected to
$\Z^d\times\{-\infty\}$, then this information can be deduced by
inspecting the $\omega$'s in $D(x,n)$. By definition of $\ell(x,n)$,
in this case there must be a \emph{closed contour} contained in
$D(x,n)$ which separates $(x,n)$ from $\Z^d\times\{-\infty\}$; see
Figure~\ref{fig:det-triangle}. Note in particular that
$D(x,n) = \{(x,n)\}$ if $\omega(x,n)=0$.

\begin{figure}
  \centering
  \begin{tikzpicture}
    \draw[thick] (0,0) -- (1.74,-3) -- (-1.74,-3) -- cycle;
    \draw (0.05,0.3) node {\footnotesize $(x,n)$};

    \draw[very thick] plot [smooth] coordinates {(-1.444,-2.5)
      (-1.3,-2.8) (-1,-2.99) (-0.5,-2.9) (0,-2.5) (0.3,-2.6) (0.6,-2.3)
      (0.9,-1.9) (1.154,-2)};

    \draw[black,opacity=0.5] plot [smooth] coordinates {(0,-0.01)
      (-0.15,-0.5) (-0.05,-1) (-0.3,-1.5) (-0.8,-2) (-0.6,-2.5) (-1,-2.99) };

    \draw[black,opacity=0.5] plot [smooth] coordinates {(-0.6,-2.5)
      (-0.5,-2.7)  (-0.5,-2.9)};

    \draw[black,opacity=0.5] plot [smooth] coordinates {(-0.15,-0.5)
      (0.05,-1) (-0.05,-1.5) (0.35,-2) (0.3,-2.6) };

     \draw[black,opacity=0.5] plot [smooth] coordinates {(0.05,-1)
       (0.3,-1.5) (0.4,-2) (0.6,-2.3)};
  \end{tikzpicture}
  \caption{A caricature of the determining triangle $D(x,n)$ with a
    closed contour. The height of the triangle is $\ell(x,n)+1$.}
  \label{fig:det-triangle}
\end{figure}
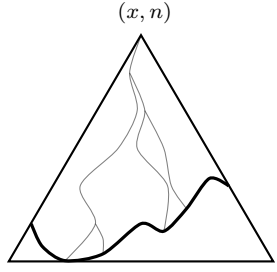

\smallskip

When constructing the walk $X$ for $n$ steps we must inspect $\omega$
and $\eta$ in $\mathsf{tube}_n$ (cf.\ Remark~\ref{rem:Xlocalconstr}).
By the nature of $\eta$, this in principle yields information on the
configurations $\eta_{-k}$, $k > n$ that the walk will find in its
future. \emph{Positive information} of the form $\eta_m(y)=1$ for
certain $m$ and $y$ is at this stage harmless because $\eta$ has
positive correlations and in view of Assumption~\ref{ass:appr-sym}
this suggests a well-behaved path in the future. On the other hand,
\emph{negative information} of the form $\eta_m(y)=0$ for certain $m$
and $y$ is problematic because this increases the chances to find more
$0$'s of $\eta$ in the walk's future. In this case
Assumption~\ref{ass:appr-sym} is useless. In order to `localise'
this negative information we `decorate' the tube around the path
with the determining triangles for all sites in $\mathsf{tube}_n$
(obviously, only zeros of $\eta$ matter)
\begin{equation}
  \label{eq:dtuben}
  \mathsf{dtube}_n = \bigcup_{(y,k) \in \mathsf{tube}_n} D(y,k) .
\end{equation}
Define
\begin{align}
  \label{eq:Dn}
  D_n \coloneqq n + \max\big\{ \ell(y,-n)+2 : \norm{X_n-y} \leq R_\loc,\,
  \ell(y,-n) < \infty \big\}.
\end{align}
Note that $D_n$ is precisely the time (for the walk) at which the
reasons for $\eta_{-n}(y)=0$ for all $y$ from the
$R_\loc$-neighbourhood of $X_n$ are explored by inspecting all the
determining triangles with base points in
$B_{R_\loc}(X_n) \times \{-n\}$. The information $\eta_{-n}(y)=0$ does
not affect the law of the random walk after time $D_n$. Note that the
`height' of a non-empty triangle $D(y,-n)$ is $\ell(y,-n)+1$. This
is why $\ell(y,-n)+2$ appears in definition~\eqref{eq:Dn}.

\smallskip

Now, between time $n$ and $D_n$ the random walk might have explored
more negative information which in general will be decided after time
$D_n$ and will affect the law of the random walk thereafter. To deal
with this \emph{cumulative negative future information} we define
recursively a sequence
\begin{align}
  \label{eq:sigmas}
  \sigma_0 \coloneqq 0, \quad \sigma_{i} \coloneqq
  \min\Big\{m>\sigma_{i-1} : \max_{\sigma_{i-1} \leq n \leq m} D_n
  \leq m \Big\}, \; i \ge 1.
\end{align}
In words, $\sigma_i$ is the first time $m$ after $\sigma_{i-1}$ when
the reasons for all the negative information that the random walk
explores in the time interval $\sigma_{i-1},\dots,m$ are decided
`locally' and thus the law of the random walk after time $\sigma_i$
does not depend on that negative information. The $\sigma_i$ are
stopping times with respect to the filtration
$\mathcal{F}=(\mathcal{F}_n)_{n =0,1,2,\dots}$, where
\begin{align}
  \label{eq:defFn}
  \mathcal{F}_n \coloneqq \sigma\big( X_j : 0 \leq j \leq n \big)
  \vee \sigma\big(\eta_{j}(y), \omega(y,j)
  : (y,j) \in \mathsf{tube}_n \big)
  \vee \sigma\big( \omega(y,j)
  : (y,j) \in \mathsf{dtube}_n \big) .
\end{align}
Note that by construction we have $\eta_{-\sigma_i}(y)=1$ for all
$y \in B_{R_\loc}(X_{\sigma_i})$.
\begin{lemma}
  \label{lem:regen1}
  When $p$ is sufficiently close to $1$ there exist finite positive
  constants $c$ and $C$ so that
  \begin{align}
    \label{eq:sigma-inc}
    \Pr\big( \sigma_{i+1}-\sigma_i > n \, \big| \,
    \mathcal{F}_{\sigma_i} \big) \leq C e^{-c n} \quad \text{for all
    $n =1,2,\dots$, $i =0,1,\dots$ a.s.},
  \end{align}
  in particular, all $\sigma_i$ are a.s.\ finite. Furthermore, we have
  \begin{align}
    \label{eq:omegatauidomination}
    \mathscr{L}\big( (\omega(\cdot, -j-\sigma_i)_{j =0,1,\dots} \, \big| \,
    \mathcal{F}_{\sigma_i}
    \big) \succcurlyeq \mathscr{L}\big( (\omega(\cdot, -j)_{j
    =0,1,\dots} \big) \quad \text{for all  $i =0,1,\dots$  a.s.},
  \end{align}
  where `$\succcurlyeq$' denotes stochastic domination.
\end{lemma}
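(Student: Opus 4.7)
I would prove the two assertions in the reverse order, since~\eqref{eq:omegatauidomination} provides the main tool for~\eqref{eq:sigma-inc}.

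For~\eqref{eq:omegatauidomination}: By the defining recursion~\eqref{eq:sigmas} one has $D_n\le \sigma_i$ for every $n\le \sigma_i$, and therefore every determining triangle $D(y,-n)$ with $(y,-n)\in \mathsf{tube}_{\sigma_i}$ has its base at time $\ge -\sigma_i+1$. Consequently the inspected set $\mathsf{tube}_{\sigma_i}\cup\mathsf{dtube}_{\sigma_i}$ touches the time level $-\sigma_i$ only at the ball $\{y:\norm{y-X_{\sigma_i}}\le R_\loc\}$, where the requirement $\eta_{-\sigma_i}(y)=1$ forces $\omega(y,-\sigma_i)=1$. Any additional information in $\mathcal{F}_{\sigma_i}$ about the $\omega$'s at times $\le -\sigma_i$ comes only from the monotone-increasing events $\{\eta_j(z)=1\}$ at the wet sites $(z,j)\in\mathsf{tube}_{\sigma_i}$. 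Since conditioning a Bernoulli product measure on increasing events stochastically increases the remaining coordinates (FKG/Holley), the conditional shifted field $(\omega(\cdot,-j-\sigma_i))_{j\ge 0}$ dominates an i.i.d.\ Bernoulli($p$) field, which is~\eqref{eq:omegatauidomination}.

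For~\eqref{eq:sigma-inc}: I would reformulate the regeneration mechanism as a discrete-time busy period. Set $T_k\coloneqq D_k-k\ge 0$, so that $T_k=0$ exactly when $B_{R_\loc}(X_k)$ contains only wet sites (as at $k=\sigma_i$). The definition~\eqref{eq:sigmas} rewrites as
\begin{equation*}
\sigma_{i+1}-\sigma_i \;=\; \inf\bigl\{\ell\ge 1:T_{\sigma_i+k}\le \ell-k\text{ for all }0\le k\le \ell\bigr\}\;=\;\inf\bigl\{\ell\ge 1:O_\ell=0\bigr\},
\end{equation*}
where the workload chain $O_\ell\coloneqq\max_{0\le j\le\ell}(j+T_{\sigma_i+j})-\ell$ satisfies $O_0=0$ and the Lindley recursion $O_\ell=\max(O_{\ell-1}-1,T_{\sigma_i+\ell})$. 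The busy period of this queue is exponentially short once one has two quantitative inputs on the ``job sizes'' $T_k$: (i) a uniform exponential tail $\Pr(T_k>L\mid\mathcal{F}_{\sigma_i})\le Ce^{-cL}$, obtained from a union bound over the at most $(2R_\loc+1)^d$ sites $y\in B_{R_\loc}(X_k)$ together with~\eqref{est:exp-bound-perc} applied to the finite-backwards-cluster event $\{\eta_{-k}(y)=0,\,\ell(y,-k)\ge L-2\}$; and (ii) a small arrival probability $\Pr(T_k>0\mid\mathcal{F}_{\sigma_i})\le (2R_\loc+1)^d\varepsilon(p)\to 0$ as $p\nearrow 1$, by Lemma~\ref{lem:drysitesbd} applied to the singleton $V=\{(y,-k)\}$. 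For $p$ close enough to $1$ the effective load factor is strictly less than $1$, and a standard exponential-moment argument on the Lindley chain (test function $e^{\lambda O_\ell}$ with $\lambda>0$ small enough that $\E[e^{\lambda\max(O_{\ell-1}-1,T_{\sigma_i+\ell})}\mid O_{\ell-1}=y]\le(1-\delta)e^{\lambda y}$ for $y\ge 1$) yields the claimed geometric tail in~\eqref{eq:sigma-inc}.

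The main obstacle is step~(i) under the \emph{conditional} law: the $\sigma$-field $\mathcal{F}_{\sigma_i}$ remembers arbitrarily far-reaching positive information of the form $\{\eta_j(z)=1\}$, which correlates with the $\omega$'s at times $\le -\sigma_i$ that drive the future $T_k$, so Lemma~\ref{lem:exp-bound-perc} does not apply verbatim. The remedy is to combine the stochastic domination of the first paragraph with the representation $\{\eta_{-k}(y)=0,\,\ell(y,-k)\ge L-2\}=\{\Z^d\times\{-k-L+2\}\to^\omega (y,-k)\}\setminus\{\Z^d\times\{-\infty\}\to^\omega (y,-k)\}$ as a difference of two monotone-increasing events, so that Harris/FKG comparisons transfer the exponential bound from the unconditional Bernoulli environment to the conditional one at the cost of only a multiplicative constant, uniformly in $i$.
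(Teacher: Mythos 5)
Your proof of \eqref{eq:omegatauidomination} matches the paper's: at a time $\sigma_i$ all determining triangles that were explored are already closed above the $\eta$-time level $-\sigma_i$, the only information touching level $-\sigma_i$ is $\eta_{-\sigma_i}\equiv 1$ on $B_{R_\loc}(X_{\sigma_i})$, and the residual knowledge of wet sites consists of increasing events; FKG then gives the domination. So far so good.

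For \eqref{eq:sigma-inc} you take a genuinely different route (a Lindley/busy-period reformulation with ``job sizes'' $T_k=D_k-k$), and the reduction $\sigma_{i+1}-\sigma_i=\inf\{\ell\ge 1:O_\ell=0\}$ is correct. But there are two gaps in making it rigorous, the first of which I think is fatal as written.

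First, your remedy for step (i) does not work. You write $\{\eta_{-k}(y)=0,\,\ell(y,-k)\ge L-2\}=A\setminus B$ with $A=\{\Z^d\times\{-k-L+2\}\to^\omega(y,-k)\}$ and $B=\{\Z^d\times\{-\infty\}\to^\omega(y,-k)\}$ both increasing, and assert that Harris/FKG ``transfers the exponential bound to the conditional law.'' FKG and stochastic domination control monotone events, not differences of monotone events: conditioning on increasing information pushes both $\Pr(A)$ and $\Pr(B)$ up, and nothing constrains $\Pr(A)-\Pr(B)$. A correct replacement is to observe that on $\{\eta_{-k}(y)=0,\,\ell(y,-k)\ge L-2\}$ every site of the longest open backwards path from $(y,-k)$ is a zero of $\eta$, so the event is contained in a union, over nearest-neighbour paths $y=y_0,\dots,y_{L-2}$, of the events $\{\eta_{-k-i}(y_i)=0\text{ for }i=0,\dots,L-2\}$. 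Each of these is decreasing, so \eqref{eq:omegatauidomination} (via Corollary~\ref{cor:drysitesbd} / Lemma~\ref{lem:drysitesbd}) bounds its conditional probability by $\varepsilon(p)^{L-1}$, and a union bound over the $(2R_\loc+1)^d\cdot 3^{d(L-2)}$ paths gives the desired exponential tail. This is, at bottom, exactly the reduction to ``many dry sites at distinct times'' that the paper uses.

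Second, even granting the tail bound on $T_k$ given $\mathcal{F}_{\sigma_i}$, the drift step as you wrote it needs $\E[e^{\lambda\max(O_{\ell-1}-1,T_{\sigma_i+\ell})}\mid O_{\ell-1}=y]\le(1-\delta)e^{\lambda y}$, i.e.\ a conditional control of $T_{\sigma_i+\ell}$ given the intermediate history that produced $O_{\ell-1}$, not merely its $\mathcal{F}_{\sigma_i}$-conditional marginal. Between consecutive $\sigma$'s the construction accumulates exactly the negative information that \eqref{eq:omegatauidomination} is designed to erase, and the stochastic domination is established only at $\sigma$-times. The paper avoids this by bounding the whole event $\{\sigma_{i+1}-\sigma_i>n\}$ in one shot: it shows the event forces a zero of $\eta$ at each time level in $\{\sigma_i,\dots,\sigma_i+n\}$ (within the decorated tube), then union-bounds over walk paths and over the choice of dry sites and applies Corollary~\ref{cor:drysitesbd}, which bounds the joint probability of all those zeros at once. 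The busy-period (Lindley) structure you identify is indeed present in the paper, but it is confined to the proof of Lemma~\ref{lem:drysitesbd}, where the ``job sizes'' $D_i$ live on disjoint $\omega$-layers and are therefore genuinely independent; your $T_k$'s do not have that independence, which is why working at their level introduces the conditioning difficulty you flagged but did not resolve.
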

\begin{proof}
  Throughout the proof we write
  $\widehat {R}_\kappa \coloneqq (2 R_\kappa+1)^d$ and
  $\widehat {R}_\loc \coloneqq (2 R_\loc+1)^d$ for the number of
  elements in $B_{R_\kappa}(0)$ respectively in
  $B_{R_\loc}(0)$.

  Consider first the case $i=0$ in \eqref{eq:sigma-inc}. The event
  $\{\sigma_1>n\}$ enforces that in the $R_\loc$-vicinity of the path
  there are space-time points $(y_j,-j)$ with $\eta_{-j}(y_j)=0$ for
  $j=0,1,\dots,n$. For a fixed choice of the $y_j$'s by
  Lemma~\ref{lem:drysitesbd} the probability of that event is bounded
  by $\varepsilon(p)^n$. We use a relatively crude estimate to bound
  the number of relevant vectors
  $(y_0,y_1,\dots,y_n) \in (\Z^d)^{n+1}$, as follows. There are
  $\widehat R_\kappa^{n}$ possible $n$-step paths for the walk. Assume
  there are exactly $k$ time points along the path, say
  $0 \le m_1 <\cdots<m_k\le n$, when a point
  $(y_{m_i},-m_i) \in B_{R_\loc}(X_{m_i}) \times \{-m_i\}$ with
  $\eta_{-i}(y_{m_i})=0$ is encountered and hence the corresponding
  `determining' triangle $D(y_{m_i},-m_i)$ is not empty (when $n>1$,
  we necessarily have $m_1=0$ or $m_1=1$).

  For consistency of notation we write $m_{k+1} =n$. Then the height
  of $D(y_{m_i},-m_i)$ is bounded below by $ m_{i+1}-m_i$. For a fixed
  $n$-step path of $X$ and fixed $m_1 <\cdots<m_k$, there are at most
  $\widehat R_\loc^k$ many choices for the $y_{m_i}$, $i=1,\dots,k$,
  and inside $D(y_{m_i},-m_i)$ we have at most
  $\widehat R_\kappa^{m_{i+1}-m_i-1}$ choices to pick
  $y_{m_i+1},y_{m_i+2},\dots, y_{m_{i+1}-1}$ (start with $y_{m_i}$,
  then follow a longest open path which is not connected to
  $\Z^d \times \{-\infty\}$, these sites are necessarily zeros of
  $\eta$). Thus, there are at most
  \begin{align*}
    \widehat R_\kappa^n \sum_{k=1}^n \sum_{m_1<\cdots<m_k \le m_{k+1}
      = n} \widehat R_\loc^k
    \prod_{i=1}^k \widehat R_\kappa^{m_{i+1}-m_i-1}
    = \widehat R_\kappa^n \sum_{k=1}^n \binom{n}{k} \widehat R_\loc^k
    \widehat R_\kappa^{n-k} \le \widehat R_\kappa^n \big( \widehat R_\loc  +
    \widehat R_\kappa \big)^n
  \end{align*}
  possible choices of $(y_0,y_1,\dots,y_n)$ and hence we have
  \begin{align*}
    \Pr(\sigma_1>n) \le \bigl(\widehat  R_\kappa (\widehat R_\loc  +
    \widehat R_\kappa ) \varepsilon(p)\bigr)^n.
  \end{align*}
  The right hand side decays exponentially when $p$ is close to $1$ so
  that $\varepsilon(p)$ is small enough. For general $i>0$
  \eqref{eq:sigma-inc} follows by induction, employing
  \eqref{eq:omegatauidomination} and the argument for $i=0$.

  \smallskip

  In order to verify \eqref{eq:omegatauidomination} note that the
  stopping times $\sigma_i$ are special in the sense that on the one
  hand, at a time $\sigma_i$ the `negative information' in
  $\mathcal{F}_{\sigma_i}$, that is the knowledge of some zeros of
  $\eta$ in the $R_\loc$-neighbourhood of the path, has been
  `erased' because the reasons for that are decided by local
  information contained in $\mathcal{F}_{\sigma_i}$. On the other
  hand, the `positive information', that is the knowledge of ones of
  $\eta$, enforces the existence of certain open paths for the
  $\omega$'s. And this information is possibly retained. Thus,
  \eqref{eq:omegatauidomination} follows from the FKG inequality for
  the $\omega$'s.
\end{proof}

\begin{corollary}[Reformulation of Lemma~\ref{lem:drysitesbd}]
  For \label{cor:drysitesbd} any
  $V = \big\{ (x_1,t_1),\dots,(x_k,t_k) \big\} \subset \Z^d \times \N$
  with $t_1 < t_2 < \cdots < t_k$ and $\varepsilon(p)$ as in
  Lemma~\ref{lem:drysitesbd} we have
  \begin{equation}
    \label{eq:drysitesbd.cond}
    \Pr\left( \eta_{-t-\sigma_i}(x+X_{\sigma_i})=0 \; \text{for all}\; (x,t) \in V
      \, \big| \, \mathcal{F}_{\sigma_i} \right) \leq
    \varepsilon(p)^k.
  \end{equation}
\end{corollary}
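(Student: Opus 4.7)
My plan is to combine the stochastic domination \eqref{eq:omegatauidomination} from Lemma~\ref{lem:regen1} with the unconditional estimate of Lemma~\ref{lem:drysitesbd}, exploiting that the event in question is monotone in the underlying Bernoulli environment. First I would observe that by \eqref{eq:CP}, $\eta_s(y)$ is a non-decreasing function of $\omega$: switching additional sites from closed to open can only create new backwards infinite open paths. Hence the event
\[
A \coloneqq \{\eta_{-t-\sigma_i}(x+X_{\sigma_i})=0 \text{ for all } (x,t)\in V\}
\]
is decreasing in $\omega$, and since $X_{\sigma_i}$ and $\sigma_i$ are $\mathcal{F}_{\sigma_i}$-measurable while every $t_j\ge 1$, conditionally on $\mathcal{F}_{\sigma_i}$ the event $A$ is a decreasing function of $(\omega(\cdot,-j-\sigma_i))_{j\ge 1}$.

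Next, by \eqref{eq:omegatauidomination} the conditional law of $(\omega(\cdot,-j-\sigma_i))_{j\ge 0}$ given $\mathcal{F}_{\sigma_i}$ stochastically dominates a fresh i.i.d.\ Bernoulli($p$) field. Since decreasing events reverse stochastic domination (via Strassen's theorem), this yields
\[
\Pr(A \mid \mathcal{F}_{\sigma_i}) \le \Pr\big(\wt\eta_{-t}(x+X_{\sigma_i})=0 \text{ for all } (x,t)\in V\big),
\]
where $\wt\eta$ denotes a stationary discrete-time contact process built from an independent copy of $\omega$. By translation invariance of the law of $\wt\eta$, the spatial shift by the (conditionally constant) vector $X_{\sigma_i}$ may be absorbed, and the right-hand side equals $\Pr(\wt\eta_{-t}(x)=0 \text{ for all } (x,t)\in V)$. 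Since the times $-t_1>-t_2>\cdots>-t_k$ are distinct and strictly decreasing, Lemma~\ref{lem:drysitesbd} applied to $\wt V = \{(x_j,-t_j) : 1\le j\le k\}$ bounds this by $\varepsilon(p)^k$, which is the desired estimate.

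There is no substantial technical obstacle beyond this bookkeeping, as all of the probabilistic content has been deposited in Lemmas~\ref{lem:drysitesbd} and~\ref{lem:regen1}. The only subtle point worth being explicit about is that although $A$ involves $\eta$ at times strictly earlier than $-\sigma_i$, and although $\mathcal{F}_{\sigma_i}$ does reveal some $\omega$'s at those earlier times via the determining triangles of past dry sites, the domination \eqref{eq:omegatauidomination} has been designed precisely to neutralise exactly this effect, so the stochastic comparison above is valid as stated.
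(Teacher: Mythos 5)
Your argument is correct and is exactly the reasoning the paper intends: the paper's proof of the corollary is the single sentence that the assertion is an easy consequence of \eqref{eq:omegatauidomination} and Lemma~\ref{lem:drysitesbd}, and your write-up simply makes explicit the monotonicity of the event in $\omega$ and the reversal of stochastic domination for decreasing events. The bookkeeping (translation invariance, strictly decreasing times $-t_1 > \cdots > -t_k$) is handled correctly.
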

\begin{proof}
  The assertion is an easy consequence of \eqref{eq:omegatauidomination} and
  Lemma~\ref{lem:drysitesbd}.
\end{proof}

For $t \in \N$ we define
$R_t \coloneqq \inf\{ i \in \Z_+ : \sigma_i \ge t\}$ and for
$m=1,2,\dots$ we put
\begin{align}
  \label{eq:tautildes}
  \wt{\tau}^{(t)}_m \coloneqq \begin{cases}
    \sigma_{R_t-m+1} - \sigma_{R_t-m}, & m \le R_t, \\
    0, & \text{else}.
  \end{cases}
\end{align}
In words, $\wt{\tau}^{(t)}_1$ is the length of the time interval
$(\sigma_{i-1},\sigma_i]$ which contains $t$ and $\wt{\tau}^{(t)}_m$
is the length of the ($m-1$)-th interval before it.

\begin{lemma}
  \label{lem:tautildetails}
  When $p$ is sufficiently close to $1$ there exist finite positive
  constants $c$ and $C$ so that for all $i, n =0,1,\dots$
  \begin{align}
    \label{eq:tautilde1tail}
    \Pr\big( \wt{\tau}^{(t)}_1 \ge n \, \big| \, \mathcal{F}_{\sigma_i} \big)
    & \leq C e^{-c n} \quad \text{a.s.\ on $\{\sigma_i < t \}$}, \\
    \intertext{and generally}
    \label{eq:tautildemtail}
    \Pr\big( R_t \ge i+m, \wt{\tau}^{(t)}_m \ge n \, \big| \, \mathcal{F}_{\sigma_i} \big)
    & \leq C m^2 e^{-c n} \quad \text{for $m =1,2,\dots$ a.s.\ on $\{\sigma_i < t \}$}.
  \end{align}
\end{lemma}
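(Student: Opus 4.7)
My plan is to establish both bounds by decomposing on the value of $R_t$ and then iteratively applying the one-step conditional tail \eqref{eq:sigma-inc} of Lemma~\ref{lem:regen1}. The stochastic domination \eqref{eq:omegatauidomination} is what permits invoking \eqref{eq:sigma-inc} at every $\sigma_j$ with $j \ge i$: writing $\xi_j := \sigma_j - \sigma_{j-1}$, one has $\Pr(\xi_{j+1} \ge n \mid \mathcal{F}_{\sigma_j}) \le Ce^{-cn}$ a.s.\ on $\{\sigma_i < t\}$ for all $j \ge i$.

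For \eqref{eq:tautilde1tail}, decompose on $R_t = j+1$; since $\wt\tau^{(t)}_1 = \xi_{R_t}$ and $\{R_t = j+1\} = \{\sigma_j < t \le \sigma_{j+1}\}$, combining with $\xi_{j+1} \ge n$ forces $\xi_{j+1} \ge \max(n,\, t-\sigma_j)$. Conditioning on $\mathcal{F}_{\sigma_j}$ and using the one-step tail,
\[
\Pr\bigl(\xi_{j+1} \ge \max(n,\, t-\sigma_j) \bigm| \mathcal{F}_{\sigma_j}\bigr) \le Ce^{-cn/2}\, e^{-c(t-\sigma_j)/2}.
\]
Since $\xi_k \ge 1$, on $\{\sigma_j < t\}$ we have $t - \sigma_j \ge R_t - j$, so $\sum_{j \ge i}\mathbbm{1}_{\sigma_j<t}\, e^{-c(t-\sigma_j)/2} \le \sum_{k \ge 1} e^{-ck/2} < \infty$ almost surely, and taking conditional expectation yields \eqref{eq:tautilde1tail}.

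For \eqref{eq:tautildemtail} with $m \ge 2$, decompose on $k := R_t - m$:
\[
\Pr\bigl(R_t \ge i+m,\, \wt\tau^{(t)}_m \ge n \bigm| \mathcal{F}_{\sigma_i}\bigr)
= \sum_{k \ge i} \Pr\bigl(R_t = k+m,\, \xi_{k+1} \ge n \bigm| \mathcal{F}_{\sigma_i}\bigr).
\]
For each summand, condition on $\mathcal{F}_{\sigma_{k+1}}$ to control the inner probability $\Pr(\sigma_{k+m-1} < t \le \sigma_{k+m} \mid \mathcal{F}_{\sigma_{k+1}})$. By \eqref{eq:omegatauidomination} the increments $\xi_{k+2},\dots,\xi_{k+m}$ are stochastically dominated by an i.i.d.\ sequence of exp-tail variables bounded below by $1$; writing $T_k := t - \sigma_{k+1}$ and $\mu$ for the mean of the dominating variable, Cramér-type concentration yields a bound of the form $C\exp(-c\,|T_k - (m-1)\mu|)$ (up to polynomial corrections in $m$). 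Only $O(m)$ indices $k$ admit $T_k$ in the window of width $O(m)$ around $(m-1)\mu$ (since $\sigma_{k+1}$ increases by at least $1$), each contributing $\le Ce^{-cn}$ via the one-step tail applied to $\xi_{k+1}$; summing the Chernoff tails outside the window costs at most a further factor $O(m)$. Combining gives $Cm^2 e^{-cn}$.

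The main obstacle is producing a \emph{polynomial} rather than exponential dependence on $m$ in the inner bound on $\Pr(R_t = k+m \mid \mathcal{F}_{\sigma_{k+1}})$: a naïve iteration of \eqref{eq:sigma-inc} via exponential moments $E[e^{c\xi_j} \mid \mathcal{F}_{\sigma_{j-1}}] \le M$ only yields the useless factor $M^{m-2}$. Obtaining polynomial $m$-dependence requires exploiting genuine concentration of $\sigma_{k+m-1} - \sigma_{k+1}$ around $(m-2)\mu$, which is made rigorous by applying Cramér's large-deviation theorem to the i.i.d.\ sequence dominating $(\xi_{k+j})_{j\ge 2}$ via \eqref{eq:omegatauidomination}.
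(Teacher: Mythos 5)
Your argument for \eqref{eq:tautilde1tail} is essentially the paper's: both decompose on the index $j$ with $\sigma_j < t \le \sigma_{j+1}$, bound the one-step tail by $Ce^{-c((t-\sigma_j)\vee n)}$ via Lemma~\ref{lem:regen1}, and sum over the (distinct) values of $t-\sigma_j$. For \eqref{eq:tautildemtail} you take a genuinely different route. The paper uses no concentration input for the sum $\sigma_{j+m}-\sigma_{j+1}$: it writes $\{\sigma_{j+1}=\ell,\,\sigma_{j+m-1}<t\le\sigma_{j+m}\}\subset\bigcup_{r=j+2}^{j+m}\{\sigma_r-\sigma_{r-1}\ge(t-\ell)/(m-1)\}$ (pigeonhole), so a union bound plus the one-step tail \eqref{eq:sigma-inc} and the tower property give a factor $(m-1)Ce^{-c(t-\ell)/(m-1)}$; the second factor of $m$ then emerges from the explicit double geometric sum over the values $k=\sigma_j$ and $\ell=\sigma_{j+1}$, whose decay rate is only $c/(m-1)$. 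You instead upgrade the one-step conditional tails to a joint stochastic domination of $(\xi_{k+2},\dots,\xi_{k+m})$ by an i.i.d.\ sequence and apply Chernoff, paying $O(m)$ for the window of $T_k$-values where only the trivial bound is available. Both yield polynomial $m$-dependence; the paper's pigeonhole is more elementary (no coupling construction is needed beyond \eqref{eq:sigma-inc}), while your version makes more transparent \emph{why} only $O(m)$ indices $k$ contribute.

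Two caveats on your version. First, the two-sided bound $C\exp(-c\,|T_k-(m-1)\mu|)$ is not what domination gives you: dominating the increments from above transfers only the upper-tail Chernoff bound $\Pr(\sigma_{k+m}-\sigma_{k+1}\ge T_k\mid\mathcal{F}_{\sigma_{k+1}})\le e^{-c(T_k-(m-1)\mu')}$, while a lower-tail estimate would go the wrong way under this domination. The lower side must instead come from the deterministic bound $\xi_j\ge 1$, which makes $\{R_t=k+m\}$ empty for $T_k\le m-2$; the resulting window $[m-2,(1+\delta)(m-1)\mu']$ still has width $O(m)$, so your count survives, but the "Cramér" step as stated is only half justified. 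Second, $T_k=t-\sigma_{k+1}$ is $\mathcal{F}_{\sigma_{k+1}}$-measurable but not $\mathcal{F}_{\sigma_k}$-measurable, so combining the factor $e^{-cn}$ from $\{\xi_{k+1}\ge n\}$ (controlled given $\mathcal{F}_{\sigma_k}$) with a bound depending on $T_k$ requires decomposing over the values of both $\sigma_k$ and $\sigma_{k+1}$ --- exactly the double sum the paper writes out; this is routine bookkeeping but should not be elided.
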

\begin{proof}
  For \eqref{eq:tautilde1tail}, we have
  \begin{align*}
    & \Pr\big( \wt{\tau}^{(t)}_1 \ge n \, \big| \, \mathcal{F}_{\sigma_i} \big) \\
    & = \Pr\big( \sigma_{i+1} \ge t \vee (n + \sigma_i) \, \big| \,
      \mathcal{F}_{\sigma_i} \big)
           + \sum_{j>i} \sum_{\ell=\sigma_i + 1}^{t-1}
           \Pr\big( \sigma_j=\ell, \sigma_{j+1} \ge t \vee (\ell + n) \, \big| \,
           \mathcal{F}_{\sigma_i} \big) \\
    & \le C e^{-c n} + \sum_{\ell=\sigma_i + 1}^{t-1} C e^{-c((t-\ell) \vee n)}
      \Pr\big( \exists\, j > i \, : \, \sigma_j = \ell \, \big| \,
      \mathcal{F}_{\sigma_i} \big) \\
    & \le C e^{-c n} + \ind{\sigma_i \le t-n-2} \sum_{\ell=\sigma_i + 1}^{t-n-1} C e^{-c(t-\ell)}
      + \ind{n+1 \le t} \sum_{\ell=t-n}^{t-1} C e^{-c n}
      \le C \big( 1 + \tfrac{e^{-c}}{1-e^{-c}} + n \big) e^{-c n}
  \end{align*}
  where we used Lemma~\ref{lem:regen1} and
  \begin{align*}
    \Pr\big( \sigma_j=\ell, \sigma_{j+1} \ge t \vee (\ell + n) \, \big| \,
    \mathcal{F}_{\sigma_i} \big)
    = \E\big[ \ind{\sigma_j=\ell} \Pr( \sigma_{j+1}- \sigma_j \ge (t-\ell) \vee n \,|\,
    \mathcal{F}_{\sigma_j}) \big| \, \mathcal{F}_{\sigma_i} \big]
  \end{align*}
  in the first inequality.

  Similarly, for $m \ge 2$ (we assume implicitly that
  $\sigma_i \le t-n-m-1$ for otherwise the conditional probability
  appearing on the right-hand side of \eqref{eq:tautildemtail} equals
  $0$)
  \begin{align*}
    \Pr\big( \wt{\tau}^{(t)}_m \ge \; n \, \big| \, \mathcal{F}_{\sigma_i} \big)
    & = \sum_{j>i} \sum_{k=\sigma_i+1}^{t-m-n} \sum_{\ell=k+n}^{t-m+1}
      \Pr\big( \sigma_j=k, \sigma_{j+1}=\ell, \sigma_{j+m-1}<t, \sigma_{j+m} \ge t
      \, \big| \, \mathcal{F}_{\sigma_i} \big) \\
    & \le \sum_{j>i} \sum_{k=\sigma_i+1}^{t-m-n} \sum_{\ell=k+n}^{t-m+1}
      \Pr\big( \sigma_j=k, \sigma_{j+1}=\ell \, \big| \, \mathcal{F}_{\sigma_i} \big)
      \times (m-1) C e^{-c(t-\ell)/(m-1)} \\
    & \le C (m-1) \sum_{k=\sigma_i+1}^{t-m-n} \sum_{\ell=k+n}^{t-m+1} e^{-c(t-\ell)/(m-1)}
      \sum_{j>i} \Pr\big( \sigma_j=k \, \big| \,
      \mathcal{F}_{\sigma_i} \big) \times C e^{-c(\ell-k)} \\
    & \le C^2 (m-1) \sum_{k=\sigma_i+1}^{t-m-n} e^{ck - ct/(m-1)}
      \sum_{\ell=k+n}^{t-m+1} \exp\big(-c \tfrac{m-2}{m-1} \ell \big)
  \end{align*}
  where we used in the first inequality that
  \begin{align*}
    \big\{ \sigma_{j+1}=\ell, \sigma_{j+m-1}<t, \sigma_{j+m} \ge t
    \big\} \subset \bigcup_{r=j+2}^{j+m} \big\{ \sigma_r - \sigma_{r-1}
    \ge \tfrac{t-\ell}{m-1} \big\}
  \end{align*}
  together with Lemma~\ref{lem:regen1} and then argued analogously to
  the proof of \eqref{eq:tautilde1tail} for the second inequality. For
  $m=2$, the chain of inequalities above yields the bound
  \begin{align*}
    \Pr\big( \wt{\tau}^{(t)}_m \ge n \, \big| \, \mathcal{F}_{\sigma_i} \big)
    \le C^2 \sum_{k=\sigma_i+1}^{t-n} (t-k-n) e^{-c(t-k)}
    \le C^2 e^{-c n} \sum_{\ell=0}^\infty \ell e^{-c \ell}
  \end{align*}
  whereas for $m>2$ we obtain
  \begin{align*}
     \Pr\big( \wt{\tau}^{(t)}_m \ge n \, \big| \, \mathcal{F}_{\sigma_i} \big)
    & \le C^2 (m-1) \sum_{k=\sigma_i+1}^{t-m-n} e^{ck - ct/(m-1)}
      \frac{\exp\big( - c \tfrac{m-2}{m-1} (k+n) \big)}{1-e^{c \frac{m-2}{m-1}}}  \\
    & = \frac{C^2 (m-1)}{1-e^{c \frac{m-2}{m-1}}}
      \exp\big( - c \tfrac{m-2}{m-1} n - c \tfrac{t}{m-1} \big)
      \sum_{k=\sigma_i+1}^{t-m-n} e^{ck/(m-1)} \\
    & \le \frac{C^2 (m-1)}{1-e^{c \frac{m-2}{m-1}}}
      \exp\big( - c \tfrac{m-2}{m-1} n - c \tfrac{t}{m-1} \big)
      \frac{e^{c(t-n)/(m-1)}}{e^{c/(m-1)}-1}
    \\&= \frac{C^2 (m-1)}{1-e^{c \frac{m-2}{m-1}}} \frac{e^{-c n}}{e^{c/(m-1)}-1}
        \le C' (m-1)^2 e^{-c n}.
  \end{align*}
  Thus, \eqref{eq:tautildemtail} holds (with suitable adaptation of
  the value of the prefactor).
\end{proof}

As a result of \eqref{eq:omegatauidomination} and
Assumption~\ref{ass:appr-sym}, the walk is well-behaved at least along
the sequence of stopping times $\sigma_i$, we formalise this in the
following result.
\begin{lemma}
  \label{lem:xincrbd1} When $p$ is sufficiently close to $1$ there
  exist finite positive constants $c$ and $C$ so that for all finite
  $\mathcal{F}$-stopping times $T$ with $T \in \{\sigma_i : i \in
  \N\}$ a.s.\
  and all $k \in \N$
  \begin{align}
    \label{eq:xincrbd1a}
    \Pr\big( \norm{X_k-X_T} > s_{\mathrm{max}}(k-T) \, \big| \,
    \mathcal{F}_T \big)
    & \leq C e^{-c(k-T)} \quad \text{a.s.\ on $\{T<k\}$} \\
    \intertext{and for $j<k$}
    \label{eq:xincrbd1b}
    \Pr\big( \norm{X_k-X_j} > (1+\epsilon) s_{\mathrm{max}}(k-j) \,
    \big| \, \mathcal{F}_T \big)
    & \leq C e^{-c(k-j)} \quad \text{a.s.\ on $\{T \le j\}$}
  \end{align}
  with $s_{\mathrm{max}}$ as in Lemma~\ref{lem:aprioribd}.
\end{lemma}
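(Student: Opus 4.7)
The plan is to establish \eqref{eq:xincrbd1a} by rerunning the proof of Lemma~\ref{lem:aprioribd} conditionally on $\mathcal{F}_T$ after the stopping time $T$, and then to reduce \eqref{eq:xincrbd1b} to \eqref{eq:xincrbd1a} by ``jumping'' from $j$ to the next regeneration time at or after $j$.

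For \eqref{eq:xincrbd1a}, consider the shifted walk $(X_{T+n}-X_T)_{n\ge 0}$ and the dry-visit counter $H^T_n \coloneqq \#\{0\le i\le n : \eta_{-T-i}(X_{T+i})=0\}$. The combinatorial estimate \eqref{eq:Hnk0}--\eqref{eq:Hnk}, used in the proof of Lemma~\ref{lem:aprioribd}, transfers verbatim under $\Pr(\,\cdot\,|\mathcal{F}_T)$ once Lemma~\ref{lem:drysitesbd} is replaced by Corollary~\ref{cor:drysitesbd}, yielding $\Pr(H^T_n \ge \delta n \mid \mathcal{F}_T) \le c_1 e^{-c_2 n}$ for $\delta$ small and $p$ close to $1$. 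The coordinate-wise coupling with the one-dimensional reference walk $\wt{X}$ used in \eqref{eq:Xn1bars} depends only on Assumption~\ref{ass:appr-sym} at wet sites (note that $\eta_{-T}(y)=1$ for $y\in B_{R_\loc}(X_T)$ by the very definition of $\sigma_i$), the finite range $R_\kappa$ at dry sites, and the walker's independent auxiliary randomness, so it runs unchanged under the conditional law. The standard large-deviation bound for $\wt{X}$ then gives \eqref{eq:xincrbd1a}.

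For \eqref{eq:xincrbd1b}, set $\sigma_\ast \coloneqq \sigma_{R_j}$, the first regeneration time at or after $j$. Then $\sigma_\ast\in\{\sigma_i:i\in\N\}$ is an $\mathcal{F}$-stopping time, and on $\{T\le j\}$ one has $\sigma_\ast-j\le \wt{\tau}^{(j)}_1$, whose conditional tail is exponentially small by \eqref{eq:tautilde1tail}. Writing $n\coloneqq k-j$ and exploiting the finite range of $\kappa$, decompose
\begin{align*}
\norm{X_k - X_j} \le R_\kappa (\sigma_\ast - j) + \ind{\sigma_\ast \le k}\norm{X_k - X_{\sigma_\ast}} + \ind{\sigma_\ast > k} R_\kappa n.
\end{align*}
With $\delta\coloneqq \epsilon s_{\mathrm{max}}/(2R_\kappa)$, a violation of \eqref{eq:xincrbd1b} on $\{\sigma_\ast - j \le \delta n\}$ forces $\sigma_\ast \le k$ and $\norm{X_k - X_{\sigma_\ast}} > s_{\mathrm{max}}(k-\sigma_\ast) + (\epsilon/2) s_{\mathrm{max}} n$, which has exponentially small probability by \eqref{eq:xincrbd1a} applied at $\sigma_\ast$ (summed over the possible values of $\sigma_\ast$). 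On $\{\sigma_\ast - j > \delta n\}$, the bound \eqref{eq:tautilde1tail} supplies the needed exponential decay in $n$.

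The main obstacle is verifying that the argument for Lemma~\ref{lem:aprioribd} really goes through conditionally: at $T$ the configuration $\eta_m$ for $m\ge -T$ in a neighbourhood of the past path is fully exposed, so we no longer have stationarity in the environment. The critical observation that removes this obstacle is that Lemma~\ref{lem:aprioribd} only used $\eta$ through Lemma~\ref{lem:drysitesbd}, whose conditional counterpart at a $\sigma_i$-type stopping time is precisely Corollary~\ref{cor:drysitesbd}; all remaining ingredients (finite range of $\kappa$, closeness to $\kappa_\rf$ at wet sites, and the walker's independent driving randomness) are manifestly preserved under conditioning on $\mathcal{F}_T$.
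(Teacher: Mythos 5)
Your proposal is correct and takes essentially the same approach as the paper: for \eqref{eq:xincrbd1a} it reruns the proof of Lemma~\ref{lem:aprioribd} under the conditional law, replacing Lemma~\ref{lem:drysitesbd} by Corollary~\ref{cor:drysitesbd} (which is just \eqref{eq:omegatauidomination} in disguise); for \eqref{eq:xincrbd1b} it jumps to the first $\sigma_i$ at or after $j$, controls that waiting time by \eqref{eq:tautilde1tail}, and then invokes \eqref{eq:xincrbd1a} from that stopping time. The only cosmetic difference is that you make the triangle-inequality split and the choice of $\delta$ explicit, whereas the paper leaves it at ``use that increments are bounded for the initial piece''; you also leave implicit (but correctly rely on) the reduction to $T=\sigma_\ell$ via Lemma~\ref{lem:equalstoppingtimes}, which the paper states up front.
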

\begin{proof} 
  Note that by Lemma~\ref{lem:equalstoppingtimes}, we may assume that
  $T=\sigma_\ell$ for some $\ell\in\Z_+$, the general case follows by
  writing $1=\sum_{\ell=0}^\infty \ind{T=\sigma_\ell}$ (a.s.).

  For \eqref{eq:xincrbd1a}, we combine \eqref{eq:omegatauidomination}
  and in particular Corollary~~\ref{cor:drysitesbd} with the proof of
  Lemma~\ref{lem:aprioribd}. For \eqref{eq:xincrbd1b}, let
  $T'\coloneqq\inf \big(\{\sigma_i : i \in \N\} \cap [j,\infty) \big)$
  be the time of the next $\sigma_i$ after time $j$.
  Inequality~\eqref{eq:tautilde1tail} from
  Lemma~\ref{lem:tautildetails} shows that
  $\Pr\big( T'-j > \epsilon(k-j) \, \big| \, \mathcal{F}_T)$ is
  exponentially small in $k-j$. On $\{ T'-j \le \epsilon(k-j)\}$ we
  use \eqref{eq:xincrbd1a} starting from time $T'$ and simply use the
  fact that increments are bounded for the initial piece between time
  $j$ and time $T'$.
\end{proof}

For $m <n$ we say that $n$ is a \emph{$(b,s)$-cone time point for the
  decorated path beyond $m$} if
\begin{align}
  \label{eq:decconetime}
  &\big( \mathsf{tube}_n \cup \mathsf{dtube}_n \big) \cap
    \big( \Z^d \times \{-n,-n+1,\dots,-m\} \big) \notag \\
  & \hspace{6em}
    \subset \big\{ (x,-j) : m \le j \le n, \norm{x-X_n} \le b + s(n-j)
    \big\}.
\end{align}
In words (see also Figure~\ref{fig:cone-time}), $n$ is a cone time
point for the decorated path beyond $m$ if the space-time path
$(X_j,-j)_{j=m,\dots,n}$ together with its $R_\loc$-tube and
decorations by determining triangles is contained in $\cone(b,s,n-m)$
shifted to the base point $(X_n,-n)$; recall the definition of
$\cone(b,s,h)$ in \eqref{def:cone}. Note that \eqref{eq:decconetime}
in particular implies
\begin{align}
  \label{eq:conetime}
  \norm{X_n-X_j} \leq b + s(n-j) \quad \text{for}\; j=m,\dots,n-1.
\end{align}

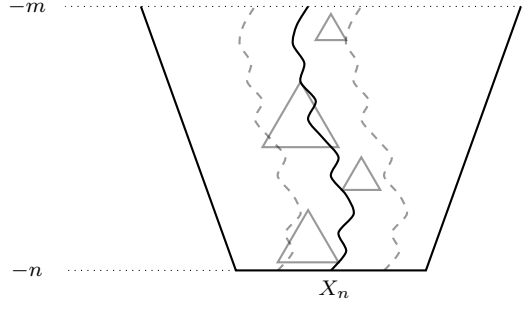
\begin{figure}
  \centering
  \begin{tikzpicture}
    \draw[thick, black] plot [smooth] coordinates {(0,0)
      (0.2,0.25) (0.1,0.5) (0.3,0.75) (0.2,1) (0,1.25) (0.1,1.5)
      (-0.1,1.75) (-0.3,2) (-0.2,2.25) (-0.4,2.5) (-0.35,2.75)
      (-0.5,3) (-0.45,3.25) (-0.3,3.5)};

    \draw (0.05,-0.25) node {\footnotesize $X_n$};

    \draw[xshift=7mm,thick, dashed, black,opacity=0.4] plot [smooth]
    coordinates {(0,0) (0.2,0.25) (0.1,0.5) (0.3,0.75) (0.2,1)
      (0,1.25) (0.1,1.5) (-0.1,1.75) (-0.3,2) (-0.2,2.25) (-0.4,2.5)
      (-0.35,2.75) (-0.5,3) (-0.45,3.25) (-0.3,3.5)};

    \draw[xshift=-7mm,thick, dashed, black,opacity=0.4] plot [smooth]
    coordinates {(0,0) (0.2,0.25) (0.1,0.5) (0.3,0.75) (0.2,1)
      (0,1.25) (0.1,1.5) (-0.1,1.75) (-0.3,2) (-0.2,2.25) (-0.4,2.5)
      (-0.35,2.75) (-0.5,3) (-0.45,3.25) (-0.3,3.5)};

    \draw[thick] (-2.5,3.5) -- (-1.25,0) -- (1.25,0) -- (2.5,3.5) ;

    \draw[dotted] (2.5,3.5) -- (-3.5,3.5) (-1.25,0) -- (-3.5,0) ;
    \draw (-4,3.5) node {\footnotesize $-m$};
    \draw (-4,0) node {\footnotesize $-n$};


    \draw[xshift=-3mm, yshift=0.8cm, scale=0.8,
    thick,black,opacity=0.4] (0,0) -- (0.5,-0.866) -- (-0.5,-0.866) -- cycle;

    \draw[xshift=4mm, yshift=1.5cm, scale=0.5,
    thick,black,opacity=0.4] (0,0) -- (0.5,-0.866) -- (-0.5,-0.866) -- cycle;

    \draw[xshift=-4mm, yshift=2.5cm, scale=1, thick,black,opacity=0.4]
    (0,0) -- (0.5,-0.866) -- (-0.5,-0.866) -- cycle;

    \draw[xshift=0mm, yshift=3.4cm, scale=0.4, thick,black,opacity=0.4]
    (0,0) -- (0.5,-0.866) -- (-0.5,-0.866) -- cycle;
  \end{tikzpicture}
  \caption{Cone time point $n$ for the decorated path beyond $m$.
    Dashed lines indicate the $R_\loc$ tube around the path and the
    triangles are the determining triangles. The cone is given by
    $\cone(b,s,n-m)$ shifted to the base point $(X_n,-n)$.}
  \label{fig:cone-time}
\end{figure}

\begin{lemma}
  \label{lem:xconetimes}
  For $\varepsilon > 0$, when $p$ is sufficiently close to $1$,
  there exist $b>0$ and $s>s_{\mathrm{max}}$ such that for all finite
  $\mathcal{F}$-stopping times $T$ with
  $T \in \{\sigma_i : i \in \N\}$ a.s.\ (i.e., $T=\sigma_J$ for a
  suitable random index $J$) and all $k \in \N$, with
  $T'\coloneqq\inf\{\sigma_i : \sigma_i \ge k\}$
  \begin{align}
    \label{eq:xconetimeprob}
    \Pr\big( \text{$T'$ is a $(b,s)$-cone time point for the decorated path beyond $T$}
    \, \big| \, \mathcal{F}_T \big) \geq 1-\varepsilon
  \end{align}
  a.s.\ on $\{T<k\}$. Furthermore $0<s-s_{\mathrm{max}} \ll 1$ can be
  chosen small.
\end{lemma}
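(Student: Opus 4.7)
Write $T = \sigma_J$ and $T' = \sigma_{J'}$. I would decompose the complement of the cone-inclusion event into two bad events: (B1) the backbone path $(X_j, -j)_{T \le j \le T'}$ together with its $R_\loc$-tube escapes a cone of slope $s' \coloneqq (1 + \varepsilon_1) s_{\mathrm{max}}$ around $(X_{T'}, -T')$; (B2) some determining triangle $D(y, -j)$ with $\|y-X_j\|\le R_\loc$ has height $h_j \coloneqq \max\{\ell(y, -j) + 1 : \|y - X_j\| \le R_\loc, \eta_{-j}(y) = 0\}$ larger than $h_0 + \alpha(T' - j)$ for a pre-chosen small $\alpha > 0$ and sufficiently large $h_0$. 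Each of (B1) and (B2) should occur with probability at most $\varepsilon/2$ conditionally on $\mathcal{F}_T$.

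For (B1), I would apply Lemma~\ref{lem:xincrbd1}(b) with the base stopping time $T$: for each $j \in [T, T']$, $\Pr(\|X_{T'} - X_j\| > s'(T' - j) + b_1 \mid \mathcal{F}_T) \le C e^{-c(T' - j)}$. Summing this geometric series over $j$ and using the exponential tail of $T' - k$ from Lemma~\ref{lem:tautildetails}, the probability of (B1) is at most $\varepsilon/2$ once $b_1$ is large enough. For (B2), the key input is that when $\eta_{-j}(y)=0$ each $\ell(y, -j)+1$ has exponential tails with rate $c(p) \to \infty$ as $p \nearrow 1$; this follows from Lemma~\ref{lem:exp-bound-perc} together with the stochastic domination \eqref{eq:omegatauidomination} from Lemma~\ref{lem:regen1}, which ensures the tail bound is uniform in $j \ge T$ conditional on $\mathcal{F}_T$. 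A union bound over $y \in B_{R_\loc}(X_j)$ followed by a geometric sum over $j$ gives that (B2) has probability at most $\varepsilon/2$, with $h_0$ depending only on $\alpha, R_\loc, p$ and $\varepsilon$ (and not on $T$ or $k$).

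On the complement of (B1) and (B2), the cone condition then follows by a direct geometric check. Any $(z, -j') \in D(y, -j) \subset \mathsf{dtube}_{T'}$ satisfies $j' - j \le h_j$ and $\|z - y\| \le j' - j$, so
\begin{align*}
\|z - X_{T'}\| \le h_j + R_\loc + \|X_j - X_{T'}\| \le (1 + s') h_j + R_\loc + b_1 + s'(T' - j').
\end{align*}
Substituting $h_j \le h_0 + \alpha(T' - j) \le h_0 + \alpha(T' - j') + \alpha h_j$ and solving for $h_j$, one obtains a bound of the form $b + s(T' - j')$ with $s = s' + \alpha(1 + s')/(1 - \alpha)$ and a sufficiently large base $b$ absorbing $h_0(1+s')/(1-\alpha) + R_\loc + b_1$. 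Since $\varepsilon_1$ and $\alpha$ can both be made arbitrarily small by taking $p$ close to $1$, the resulting $s$ is arbitrarily close to $s_{\mathrm{max}}$, as required.

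The main obstacle, setting this lemma apart from the preceding exponential bounds, is ensuring all estimates hold uniformly in $T$ and $k$: the interval $[T, T']$ has random length depending on the deterministic offset $k - T$, and the union bound for (B2) is over a number of events polynomial in that length. The uniformity of the per-site triangle tail in $j \ge T$ is exactly what the stochastic domination \eqref{eq:omegatauidomination} provides, and the geometric character of the union bound ensures it beats the polynomial growth irrespective of the value of $k - T$.
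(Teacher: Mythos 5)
You follow the same overall decomposition as the paper: separately control the escape of the $R_\loc$-tube of the path from a cone of slope slightly above $s_{\mathrm{max}}$ (your (B1), the paper's event $B_{T,k}$ together with a slack for $T'-k$), and the sizes of the determining triangles decorating the tube (your (B2)). Your treatment of (B1) via Lemma~\ref{lem:xincrbd1}(b) and the $T'-k$ tail from Lemma~\ref{lem:tautildetails} matches the paper's. The decoration control in (B2) is where you diverge, and there your argument has a genuine gap.

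You claim that for fixed $(y,-j)$ with $j\ge T$, the conditional tail $\Pr\bigl(\ell(y,-j)+1>h,\ \eta_{-j}(y)=0\mid\mathcal{F}_T\bigr)\le Ce^{-ch}$ follows from Lemma~\ref{lem:exp-bound-perc} together with the stochastic domination \eqref{eq:omegatauidomination}. But the event $\{\ell(y,-j)\ge h,\ \eta_{-j}(y)=0\}$ is \emph{not} monotone in $\omega$: $\{\ell(y,-j)\ge h\}$ is increasing (existence of a long backward open path), whereas $\{\eta_{-j}(y)=0\}$ is decreasing (no connection to $\Z^d\times\{-\infty\}$), and their intersection is neither. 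The domination \eqref{eq:omegatauidomination} transfers unconditional bounds to conditional ones only for monotone events, so the step you cite does not deliver the uniform conditional tail. This can be repaired, since $\{\ell(y,-j)\ge h,\ \eta_{-j}(y)=0\}$ forces a directed chain of $h$ zeros of $\eta$ starting at $(y,-j)$ (a decreasing event), whose conditional probability one then bounds by the counting argument underlying Lemma~\ref{lem:drysitesbd}; but that, together with a union bound over the random set $B_{R_\loc}(X_j)$ and over $j$, is essentially a re-derivation of Lemma~\ref{lem:regen1} and hence of Lemma~\ref{lem:tautildetails}. The paper avoids the issue entirely by exploiting that, \emph{by the very definition of the $\sigma_i$'s}, all determining triangles attached to tube sites at times in $(\sigma_{m-1},\sigma_m]$ are contained in a space-time rectangle whose height and side length are controlled by the gap $\wt\tau^{(k)}_m$; the required control then reduces to the already established $\sigma$-gap tail \eqref{eq:tautildemtail} from Lemma~\ref{lem:tautildetails}, with no fresh estimate on $\ell(y,-j)$. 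You should recast (B2) as the event $\{\wt\tau^{(k)}_m<M_1+\epsilon_1 m \text{ for all relevant } m\}$ and invoke Lemma~\ref{lem:tautildetails} directly.
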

\begin{proof}
  Denote the event in \eqref{eq:conetime} (with $m=T$, $n=k$ and $b$
  replaced by $b - M$, where $M>0$ will be tuned later) by $B_{T,k}$.
  We have
  \begin{multline*}
    1 - \Pr\big( B_{T,k} \, \big| \, \mathcal{F}_T \big) \le
    \sum_{j=T}^{k-1} \Pr\big( \norm{X_k-X_j} >
    b - M + s (k-j) \, \big| \, \mathcal{F}_T \big) \\
    \le \sum_{j=k-m}^{k-1} \Pr\big( \norm{X_k-X_j} > b - M \, \big| \,
    \mathcal{F}_T \big) + \sum_{j=T}^{k-m-1} \Pr\big( \norm{X_k-X_j} >
    s (k-j) \, \big| \, \mathcal{F}_T \big).
  \end{multline*}
  Using \eqref{eq:xincrbd1b} from Lemma~\ref{lem:xincrbd1} we can make
  the second sum small by choosing $m$ sufficiently large and
  $s>s_{\mathrm{max}}$. Then we can make the first sum small (or even
  vanish) by picking $b-M$ sufficiently large.

  Recall that $R_\kappa$ is the range of the random walk $X$.
  Inequality~\eqref{eq:tautilde1tail} from
  Lemma~\ref{lem:tautildetails} implies that
  $\Pr\big( T'- k \ge (M-R_\loc)/R_\kappa \, \big| \, \mathcal{F}_T
  \big)$
  can be made arbitrarily small by choosing $M$ sufficiently large. On
  $B_{T,k} \cap \{ T'- k < (M-R_\loc)/R_\kappa \}$, which has high
  probability under $\Pr(\cdot \, | \, \mathcal{F}_T)$, we have by
  construction that
  \begin{align*}
    & \mathsf{tube}_{T'} \cap
      \big( \Z^d \times \{-T',-T'+1,\dots,-T\} \big) \\
    & \hspace{6em}
      \subset \big\{ (x,-j) : T \le j \le T', \norm{x-X_{T'}} \le b + s(T'-j) \big\},
  \end{align*}
  i.e., the path together with its $R_\loc$-tube is covered by a suitably
  shifted cone with base point $(X_{T'},-T')$.

  \smallskip

  It remains to verify that under $\Pr(\cdot \, | \, \mathcal{F}_T)$
  with high probability also the decorations (recall \eqref{eq:Dxn},
  \eqref{eq:dtuben}) are covered by the same cone. To show this we may
  assume $T=\sigma_i$ for notational simplicity; this is justified by
  Lemma~\ref{lem:equalstoppingtimes}. Let $R_k$,
  $\wt{\tau}^{(k)}_1, \wt{\tau}^{(k)}_2, \dots$ be as
  defined 
  in and around \eqref{eq:tautildes} with $t=k$.

  Note that $\mathsf{dtube}_{T'}$ is contained in a union of
  space-time rectangles with heights $\wt{\tau}^{(k)}_m$, side lengths
  $2\wt{\tau}^{(k)}_m (R_\loc \vee R_\kappa)$ and base points
  $\big(X_{T'-\wt{\tau}^{(k)}_1-\cdots-\wt{\tau}^{(k)}_m},
  -(T'-\wt{\tau}^{(k)}_1-\cdots-\wt{\tau}^{(k)}_m)\big)$.
  A geometric argument shows that on the event
  \begin{align*}
  B_{T,k} \cap \bigcap_{m=1}^\infty \Big( \big\{ R_k \ge i+m,
  \wt{\tau}^{(t)}_m < M_1 + \epsilon_1 m \big\} \cup \big\{ R_k
  < i+m \big\} \Big),
  \end{align*}
  for $\epsilon_1$, $M_1$ chosen suitably in relation to $b$ and $s$,
  we also have
  \begin{align*}
    & \mathsf{dtube}_{T'} \cap
      \big( \Z^d \times \{-T',-T'+1,\dots,-T\} \big)  \\
    & \hspace{6em}
      \subset \big\{ (x,-j) : T \le j \le T', \norm{x-X_{T'}} \le b +
      s(T'-j) \big\}.
  \end{align*}
  Using inequality~\eqref{eq:tautildemtail} from
  Lemma~\ref{lem:tautildetails}, we see that
  \begin{align*}
    \sum_{m=1}^\infty \Pr\big( R_k \ge i+m, \wt{\tau}^{(k)}_m
    \ge M_1 + \epsilon_1 m \, \big| \,
    \mathcal{F}_{T} \big)
    \le \sum_{m=1}^\infty C m^2 e^{-c(M_1 + \epsilon_1 m)} \ \text{ a.s.\ on $\{T<k\}$}
  \end{align*}
  which can be made arbitrarily small when $M_1$ and $\epsilon_1$ are
  suitably tuned. This completes the proof of \eqref{eq:xconetimeprob}.
\end{proof}


Note that the $\sigma_i$ defined in \eqref{eq:sigmas} are themselves
not regeneration times since \eqref{eq:omegatauidomination} is in
general not an equality of laws. We use another layer in the
construction with suitably nested cones to forget remaining positive
information.

\smallskip

Recall the definition of cones and cone shells from \eqref{def:cone},
\eqref{eq:coneshell} and Figure~\ref{fig:cone-cs}. The following sets
of `good' $\omega$-configurations in conical shells will play a key
role in the regeneration construction. Let
$G(b_\inn , b_\out , s_\inn , s_\out ,h) \subset
\{0,1\}^{\mathsf{cs}(b_\inn , b_\out , s_\inn , s_\out ,h)}$
be the set of all $\omega$-configurations
with the property
\begin{align}
  \label{def:goodconshells}
  \begin{split}
  & \forall \,
    \eta_0, \eta_0' \in \{0,1\}^{\Z^d} \; \text{with} \;
    \eta_0|_{\displaystyle B_{b_\out }(0)} = \eta_0'|_{\displaystyle
    B_{b_\out }(0)}
    \equiv 1 \quad \text{and} \\
  & \hspace{1em} \omega \in \{0,1\}^{\Z^d \times \{1,\dots,h\}} \; \text{with} \;
    \omega|_{\mathsf{cs}(b_\inn , b_\out , s_\inn , s_\out ,h)}
    \in G(b_\inn , b_\out , s_\inn , s_\out ,h) \, : \\[1ex]
  & \hspace{3em} \eta_n(x) = \eta'_n(x) \; \text{for all} \;
    (x,n) \in \mathsf{cone}(b_\inn , s_\inn , h),
  \end{split}
\end{align}
where $\eta$ and $\eta'$ are both constructed from \eqref{eq:DCP-dyn}
with the same $\omega$'s. In words, when there are $1$'s at the bottom
of the outer cone, a configuration from
$G(b_\inn , b_\out , s_\inn , s_\out ,h)$ guarantees successful
coupling inside the inner cone irrespective of what happens outside
the outer cone.
\begin{lemma}
  \label{lem:goodconshells}
  For parameters $p$, $b_\inn$, $b_\out$, $s_\inn$ and $s_\out$ as in
  Lemma~\ref{lem:CPconeconv},
  \begin{equation}
    \Pr\big( \omega|_{\mathsf{cs}(b_\inn , b_\out ,
      s_\inn , s_\out ,h)}
    \in G(b_\inn , b_\out , s_\inn , s_\out ,h)\big)
    \ge 1-\varepsilon
  \end{equation}
  uniformly in $h \in \N$.
\end{lemma}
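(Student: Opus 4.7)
The plan is to deduce this directly from Lemma~\ref{lem:CPconeconv} via an explicit inclusion of events. First I would observe that $G_1 \cap G_2$ from Lemma~\ref{lem:CPconeconv} is measurable with respect to $\omega$ restricted to the conical shell: $\eta^\cs$ is constructed using only the $\omega$'s in the shell (all others being zeroed out), and the openness of a crossing path $\gamma$ as well as its possible intersection with $\eta^\cs$ involve only the intermediate points, which by definition lie in the shell. The same remark applies to the finite-$h$ analogue.

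The heart of the proof is to establish the pathwise inclusion
\begin{equation*}
G_1 \cap G_2 \;\subseteq\; \bigl\{\omega|_{\mathsf{cs}(b_\inn,b_\out,s_\inn,s_\out,h)} \in G(b_\inn,b_\out,s_\inn,s_\out,h)\bigr\}.
\end{equation*}
Fix $\eta_0, \eta_0'$ with $\eta_0|_{B_{b_\out}(0)} = \eta_0'|_{B_{b_\out}(0)} \equiv 1$ and build $\eta, \eta'$ via \eqref{eq:DCP-dyn} with the same $\omega$. Since $\eta_0 \wedge \eta_0' \geq \indset{\{b_\inn \le \|\cdot\|_2 \le b_\out\}} = \eta^\cs_0$, and $\eta^\cs$ uses the same $\omega$'s as $\eta, \eta'$ inside the shell while being forced to $0$ outside, an induction on $n$ using the monotonicity of \eqref{eq:DCP-dyn} yields $\eta_n, \eta'_n \ge \eta^\cs_n$ pointwise. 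Now fix $(x,n) \in \cone(b_\inn, s_\inn, h)$ with $\eta_n(x) = 1$ and choose a realising open path from some $(y_0, 0)$ with $\eta_0(y_0) = 1$ to $(x,n)$. If $y_0 \in B_{b_\out}(0)$ then $\eta_0'(y_0) = 1$ and the same path gives $\eta_n'(x) = 1$. Otherwise $\|y_0\|_2 > b_\out$, so the path begins strictly outside the outer cone and ends inside the inner cone; taking the last index at which the path sits outside the outer cone followed by the first subsequent index at which it enters the inner cone isolates a subpath that crosses the shell in the sense of \eqref{eq:crossingpath}. On $G_2$ this crossing subpath intersects $\eta^\cs$ at some intermediate $(z,m)$, whence $\eta_m'(z) \ge \eta_m^\cs(z) = 1$; the open remainder of the original path from $(z,m)$ to $(x,n)$ then gives $\eta_n'(x) = 1$. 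Exchanging the roles of $\eta$ and $\eta'$ yields $\eta_n(x) = \eta_n'(x)$, which is exactly the coupling property defining $G$.

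Once the inclusion is established, Lemma~\ref{lem:CPconeconv} gives $\Pr(G_1 \cap G_2) \ge 1 - \varepsilon$, and the uniformity in $h$ is automatic because the finite-$h$ good event requires coupling only for $0 \le n \le h$ and is therefore implied by its $h = \infty$ counterpart.

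The one nontrivial step is the extraction of a \emph{valid} crossing subpath in the sense of \eqref{eq:crossingpath}, where conditions (i) and (ii) are stated with \emph{strict} inequalities while the natural first-entry/last-exit choice only produces weak ones. This is a minor geometric nuisance; it can be handled either by observing that the underlying path has step size at most $1$, while the radial gap $b_\out - b_\inn$ between the cones (assumed $\ge 1$, say) guarantees that a single step cannot jump across the full shell, or by slightly shrinking the inner cone and enlarging the outer cone for the purposes of the extraction argument. No further estimates beyond Lemma~\ref{lem:CPconeconv} are needed.
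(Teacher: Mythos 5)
Your proof is correct and follows exactly the paper's approach: establish the pathwise inclusion $G_1 \cap G_2 \subseteq \{\omega|_{\mathsf{cs}} \in G\}$ and invoke Lemma~\ref{lem:CPconeconv}. The paper's own proof is a two-sentence pointer to Remark~\ref{rem:obsG1G2}, and the strict-versus-weak inequality issue you flag at the end is glossed over there as well (the remark even quietly uses the open inner cone while the definition of $G$ uses the closed one), so your more explicit treatment is, if anything, more careful.
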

\begin{proof}
  The assertion follows from Lemma~\ref{lem:CPconeconv} because if the
  event $G_1 \cap G_2$ defined there occurs, then
  $\omega|_{\mathsf{cs}(b_\inn , b_\out , s_\inn , s_\out ,h)} \in
  G(b_\inn , b_\out , s_\inn , s_\out ,h)$ holds (recall
  Remark~\ref{rem:obsG1G2}).
\end{proof}

\begin{figure}
  \centering
  \begin{tikzpicture}[yscale=0.65]
    \draw[thick] (-0.7,7)--(-0.5,6) -- (0.5,6) -- (0.7,7);

    \draw[dashed] (-0.6,0) -- (0,7) ; 

    \draw[thick] (-1.97,7) -- (-0.9,0)  -- (-0.3,0) -- (0.77,7);

    \draw[dotted] (-3,7) -- (0.77,7) ;
    \draw (-4,7) node {\footnotesize $0$};

    \draw[dotted] (-3,6) -- (0.77,6) ;
    \draw (-4,6) node {\footnotesize $-t_\ell$};

    \draw[dotted] (-3,0) -- (0.77,0) ;
    \draw (-4,0) node {\footnotesize $-t_{\ell+1}$};

    \draw[dotted] (0,7) -- (0,-0.5);
    \draw (0,-0.8) node{\footnotesize $0$};

    \draw[->] (-3,-0.5) -- (-3,7.5);
    \draw[->] (-3,-0.5) -- (1.5,-0.5);

    \draw[dotted] (0.77,-0.5) -- (0.77,7);
    \draw (0.77,-0.8) node {\footnotesize $a_2$};

    \draw[dotted] (-1.97,-0.5) -- (-1.97,7);
    \draw (-1.97,-0.8) node {\footnotesize $-a_2$};

    \draw[dotted] (-0.7,7) -- (-0.7,-0.5) ;
    \draw (-0.7,-0.8) node {\footnotesize $-a_1$};
  \end{tikzpicture}
  \caption{Growth condition for the sequence $(t_\ell)$: The small
    inner cone is
    $\cone(t_\ell s_{\mathrm{max}}+b_\out ,s_\out ,t_\ell)$ shifted to
    the base point $(0,-t_\ell)$. The big outer cone is
    $\cone(b_\inn, s_\inn, t_{\ell+1})$ shifted to the base point
    $(-t_{\ell+1}s_{\mathrm{max}},-t_{\ell+1})$. The slope of the
    dashed line is $s_{\mathrm{max}}$. The sequence $(t_\ell)$ must
    satisfy $a_1 < a_2$ for
    $ a_1=s_\out t_{\ell}+b_\out +s_{\mathrm{max}} t_{\ell}$ and
    $a_2=s_\inn t_{\ell+1}+b_\inn - s_{\mathrm{max}} t_{\ell+1}$. }
  \label{fig:t-ell-cond}
\end{figure}
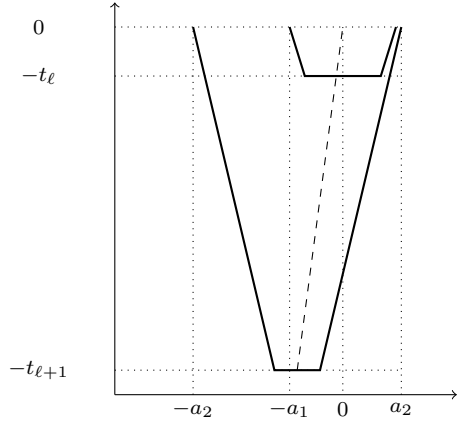

Let us denote the space-time shifts on $\Z^d \times \Z$ by
$\Theta^{(x,n)}$, i.e.,
\begin{equation}
  \label{eq:def-shift}
  \Theta^{(x,n)}(A) = \{ (x+y,m+n) : (y,m) \in A\}
  \quad \text{for}\;\: A \subset \Z^d \times \Z.
\end{equation}
An elementary geometric consideration reveals that one can choose a
deterministic sequence $t_\ell \nearrow \infty$ with the property that
for $\ell \in \N$ and $\norm{x} \le s_{\mathrm{max}} t_{\ell+1}$
\begin{align}
  \label{eq:conenesting2}
  \Theta^{(0,-t_\ell)}\big(\mathsf{cone}(t_\ell
  s_{\mathrm{max}}+b_\out ,s_\out ,t_\ell) \big) \subset
  \Theta^{(x,-t_{\ell+1})}\big(\mathsf{cone}(b_\inn , s_\inn ,
  t_{\ell+1})\big).
\end{align}
Note that this essentially enforces $t_\ell \approx \rho^\ell$ for a
suitable $\rho>1$. Indeed, a worst case picture (see
Figure~\ref{fig:t-ell-cond}) shows that we need
\begin{align*}
  t_{\ell+1} s_\inn + b_\inn - t_{\ell+1} s_{\mathrm{max}} & > t_\ell
  s_{\mathrm{max}} + b_\out + t_\ell s_\out \\ \intertext{which is
  equivalent to} t_{\ell+1} & >
  \frac{t_\ell(s_\out + s_{\mathrm{max}}) + b_\out - b_\inn }{s_\inn
                              -s_{\mathrm{max}}}.
\end{align*}
Thus, we can use (for $\ell$ sufficiently large)
\begin{align}
  \label{eq:tlc}
  t_{\ell} = \lceil \rho^\ell \rceil \; \text{ for any }  \; \rho >
  \frac{s_\out + s_{\mathrm{max}}}{s_\inn -s_{\mathrm{max}}}.
\end{align}
Furthermore note that using Lemma~\ref{lem:aprioribd} we obtain
\begin{align}
  \Pr\big(\exists \, n \le t_\ell \, : \, \norm{X_n} >
  s_{\mathrm{max}} t_\ell \big) \le \sum_{n= \lceil t_\ell
    s_{\mathrm{max}} \rceil}^{t_\ell} \Pr\big( \norm{X_n} >
  s_{\mathrm{max}} n \big) \le C' e^{-c' t_\ell}.
\end{align}
Since $t_\ell$ grows exponentially in $\ell$, the right hand side is
summable in $\ell$. Thus, from some random $\ell_0$ on, we have
$\sup_{n \le t_\ell} \norm{X_n} \le s_{\mathrm{max}} t_\ell$ for all
$\ell \ge \ell_0$, and $\ell_0$ has very short tails.

\subsection{Proof of Theorem~\ref{thm:LLNuCLTmodel1}}
\label{sec:proof-theor-refthm:l}

The proof of Theorem~\ref{thm:LLNuCLTmodel1} relies on a regeneration
construction and moment estimates for the increments between
regeneration times (recall the discussion after Remark~\ref{rem:time-rev}).
We now have prepared all the ingredients to carry out the argument,
which we prefer to give in a more verbal, descriptive style.
While all the concepts and properties discussed below can be
easily expressed in mathematical formulas we believe that the resulting
increase in length and in notational heaviness would burden the text
unnecessarily without improving neither readability nor
understandability.

That said, the regeneration construction goes as follows (see
also Figure~\ref{fig:cones1}):
  \begin{enumerate}
  \item Go to the first $\sigma_i$ after $t_1$, check if $\eta$ in the
    $b_\out $-neighbourhood of $(X_{\sigma_i},-\sigma_i)$ is
    $\equiv 1$, the path (together with its tube and decorations) has
    stayed inside the interior of the corresponding conical shell
    based at the current space-time position and the $\omega$'s in
    that conical shell are in the good set as defined in
    \eqref{def:goodconshells}. This has positive (in fact, very high)
    probability (cf.~Lemma~\ref{lem:xconetimes}) and if it occurs, we
    have found the first regeneration time $T_1$.

  \item If the event fails, we must try again. We successively check
    at times $t_2$, $t_3$, etc.: If not previously successful, at the
    $\ell$-th step let $\tilde{\sigma}_\ell$ be the first $\sigma_i$
    after $t_\ell$, check if $\tilde{\sigma}_\ell$ is a cone point for
    the decorated path beyond $t_{\ell-1}$ with
    $\norm{X_{\tilde{\sigma}_\ell}} \le s_{\mathrm{max}}
    \tilde{\sigma}_\ell$,
    the $\eta$'s in the $b_\out $-neighbourhood of
    $(X_{\tilde{\sigma}_\ell},-\tilde{\sigma}_\ell)$ are $\equiv 1$,
    $\omega$'s in the corresponding conical shell are in the good set
    as defined in \eqref{def:goodconshells} and the path (with tube
    and decorations) up to time $t_{\ell-1}$ is contained in the box
    of diameter $s_\out t_{\ell-1} + b_\out $ and height $t_{\ell-1}$.
    If this all holds, we have found the first regeneration time $T_1$.

    (We may assume that $\tilde{\sigma}_{\ell-1}$ is suitably close to
    $t_{\ell-1}$, this has very high probability by
    Lemma~\ref{lem:tautildetails}.)

  \item The path containment property holds from some finite $\ell_0$
    on. Given the construction and all the information obtained from
    it up to the $(\ell-1)$-th step, the probability that the other
    requirements occur is uniformly high (for the cone time property
    use Lemma~\ref{lem:xconetimes} with $k=t_\ell$; use
    \eqref{eq:omegatauidomination} to verify that the probability to
    see $\eta \equiv 1$ in a box around
    $(X_{\tilde{\sigma}_\ell},-\tilde{\sigma}_\ell)$ is high; use
    Lemma~\ref{lem:goodconshells} to check that conditional on the
    construction so far the probability that the $\omega$'s in the
    corresponding conical shell are in the good set is high, note that
    these $\omega$'s have not yet been looked at by the construction so far).

  \item \label{step:tlgeom} We will thus at most require a geometric
    number of $t_\ell$'s to construct the regeneration time $T_1$.
    Then we shift the space-time origin to $(X_{T_1},-T_1)$ and start
    afresh, noting that by construction, the law of
    $(\eta_{-k-T_1}(x+X_{T_1}))_{x \in \Z^d, k \in \Z}$ given all the
    information obtained in the construction so far equals the law of
    $(\eta_{-k}(x))_{x \in \Z^d, k \in \Z}$ conditioned on seeing the
    configuration $\eta_0 \equiv 1$ in the $b_\out $-box around $0$.

    The sequence $t_\ell$ grows exponentially in $\ell$ with rate
    $\rho$ (see \eqref{eq:tlc}) and we need to go to at most a random
    $\ell$ with geometric distribution with a success parameter
    $1-\delta$ very close to $1$. We thus can enforce a finite very
    high moment of the regeneration time:
    \begin{align}
      \label{eq:reg-tales}
      \begin{split}
        \Pr(\text{regeneration after time $n$})
        & \le \Pr(\text{more than $\log n/\log \rho$ steps needed}) \\
        & \le \delta^{\log n/\log \rho} = n^{-a},
      \end{split}
    \end{align}
    where $a = \log (1/\delta)/\log \rho$ can be made large by
    choosing $\delta$ small and $\rho$ close to $1$. Both is achieved
    by choosing $p$ close to $1$.
  \end{enumerate}

  We obtain a sequence of random times $T_1 < T_2 < \cdots$ such that
  $(X_{T_i}-X_{T_{i-1}}, T_i-T_{i-1})_{i=2,3,\dots}$ are i.i.d.\ and
  $\E[T_1^b], \E[(T_2-T_1)^b] < \infty$ and hence also
  $\E[\norm{X_{T_1}}^b], \E[\norm{X_{T_2}-X_{T_1}}^b] < \infty$ for
  some $b > 2$. The existence of such regeneration times implies
  Theorem~\ref{thm:LLNuCLTmodel1} by standard arguments, see e.g.\ the
  proof of Corollary~1 in \cite{Kuczek:1989} (it is easy to see from
  the construction that $X_{T_i}-X_{T_{i-1}}$ is not a deterministic
  multiple of $T_i-T_{i-1}$) and the proof of Theorem~4.1.\ in
  \cite{Sznitman:2000} for the functional CLT. Note that the speed
  must be $0$ by the assumed symmetry; see
  Assumption~\ref{ass:distr-sym}.

  \begin{remark}
    In the general case without the Assumption~\ref{ass:distr-sym} the
    above argument yields that there must be a limit speed, its value
    is given only implicitly as $\E[X_{T_2}-X_{T_1}]/\E[T_2-T_1]$.

    If in Assumption~\ref{ass:distr-sym} we would additionally require
    symmetries with respect to coordinate permutations and with
    respect to reflections along coordinate hyperplanes then the
    limiting law $\Phi$ would be a (non-trivial) centred isotropic
    $d$-dimensional normal law, cf.\ the proof of Theorem~1.1 in
    \cite{BCDG13}.
  \end{remark}

\begin{figure}
  \centering
  \begin{tikzpicture}[scale=0.7] 
    \draw [<-, thick] (-3,0) -- (-3,11.5); \draw (-4.2,3.3)
    node[rotate=90] {\small positive time (for the walk)};

    \filldraw[black] (0.7,11) circle (0.07cm); \draw[dotted] (0.7,11)
    -- (-3,11); \draw (-3.5,11) node {\scriptsize $t_0$};
    \filldraw[black] (1.5,10) circle (0.07cm); \draw[dotted] (1,10) --
    (-3,10); \draw (-3.5,10) node {\scriptsize $t_1$}; \draw[very
    thick] (0.5,10) -- (2.5,10); \draw (0.5,10) --
    ++(105:1); 
    \draw (2.5,10) -- ++(75:1);

    \filldraw[black] (0.3,6.5) circle (0.07cm); \draw[dotted]
    (0.3,6.5) -- (-3,6.5); \draw (-3.5,6.5) node {\scriptsize $t_2$};

    \draw[very thick] (-0.7,6.5) -- (1.3,6.5); \draw (-0.7,6.5) --
    ++(105:4.65); \draw (1.3,6.5) -- ++(75:4.65);

    \filldraw[black] (1.7,0.5) circle (0.07cm); \draw[dotted]
    (1.7,0.5) -- (-3,0.5); \draw (-3.5,0.5) node {\scriptsize $t_3$};

    \draw[very thick] (0.7,0.5) -- (2.7,0.5); \draw (0.7,0.5) --
    ++(105:10.85); \draw (2.7,0.5) -- ++(75:10.85);

    \draw [rounded corners =3pt] (0.7,11) .. controls (-0.5,10.4) and
    (0.9,10.7)..(1.5,10) .. controls (1.5,9.5) and (0.3,9) ..
    (0.7,8.5) .. controls (1.4,8) and (1,7.5).. (0.8,7.2) .. controls
    (0.65,7) and (0.4,6.8) .. (0.3,6.5) .. controls (0,5.9) and
    (0.9,5.3) .. (1.2,4.7) .. controls (1.6,4.1) and (1.4,3.5) ..
    (1,2.9) .. controls (0.6,2.2) and (0.9,1.5) .. (1.2,1) .. controls
    (1.4,0.7) .. (1.7,0.5);
  \end{tikzpicture}
  \caption{A schematic example: The walk passing through a sequence of
    cones in an attempt to regenerate. Here, $\tau_1=\tau_0+t_3$.}
  \label{fig:cones1}
\end{figure}
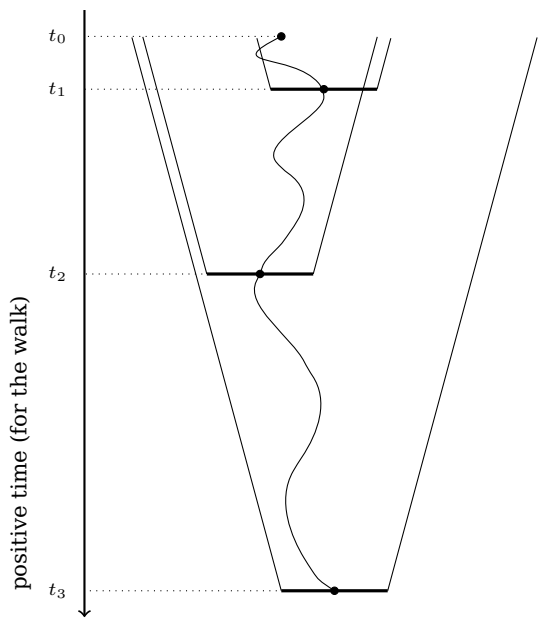

\section{A more abstract set-up}
\label{sect:abstr-setup}

The goal of this section is to present an abstract set-up where a
renewal construction similar to the one of the previous section can be
implemented. Our main motivation of this set-up is to study the
dynamics of ancestral lineages in spatial populations, but it can be
likely applied for other types of directed random walks in random
environment.

In Sections~\ref{subsect:abstr-assumpt}
and~\ref{subsect:abstr-assumpt-walk}, we present certain abstract
assumptions on the random environment and the associated random walk.
These assumptions allow to control the behaviour of the random walk
using a regeneration construction that is very similar to the one from
Section~\ref{sect:model1}. In particular, they allow to link the model
with oriented percolation, using a coarse-graining technique.

We would like to stress that coarse-graining does not convert the
presented model to the one of the previous section. In particular, the
nature of regenerations is somewhat different. We will see that the
sequence of regeneration times and associated displacements,
$(T_{i+1}-T_i, X_{T_{i+1}}-X_{T_i})_{i\ge 2}$ is not i.i.d.~but can be
generated as a certain function of an irreducible, finite-state Markov
chain and additional randomness. By ergodic properties of such chains,
this will lead to the same results as previously.

\begin{theorem}
  \label{thm:abstr-regen}
  Let the random environment $\eta$ and the random walk $X$ satisfy
  the assumptions of Sections~\ref{subsect:abstr-assumpt} and
  \ref{subsect:abstr-assumpt-walk} below with sufficiently small
  parameter $\varepsilon_U$. Then the random walk $X$ satisfies the
  strong law of large numbers with speed $0$ and the annealed central
  limit theorem with non-trivial covariance matrix. A corresponding
  functional central limit theorem holds as well.
\end{theorem}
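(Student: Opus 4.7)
The strategy is to mimic the regeneration construction of Section~\ref{subs:regen-model1} but now on a coarse-grained space-time grid. First I would introduce coarse-graining blocks (of macroscopic side length chosen according to the comparison with supercritical oriented percolation guaranteed by the assumptions in Section~\ref{subsect:abstr-assumpt}). Blocks in which $\eta$ is ``typical'' and in which the driving local randomness is ``good'' will play the role of the wet sites of a supercritical oriented percolation model, while the walker's kernel restricted to such blocks should be close to the reference kernel $\kappa_\rf$ in the sense of Assumption~\ref{ass:appr-sym}. Once this comparison is in place, the a~priori speed bound of Lemma~\ref{lem:aprioribd} transfers verbatim to the coarse-grained walk, and the tail estimate \eqref{eq:drysitesbd} on long ``dry'' runs has an analogue on the macroscopic scale provided $\varepsilon_U$ is small.

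Next I would redo the construction of the stopping times $\sigma_i$ from \eqref{eq:sigmas} at the coarse-grained level: each $\sigma_i$ is declared at the moment when all the ``negative information'' accumulated by the walker up to time $\sigma_{i-1}$ has been locally resolved via determining triangles (now meaning the coarse-grained regions needed to certify the bad blocks). Lemmas \ref{lem:regen1}--\ref{lem:tautildetails} carry over, giving exponential tails for the inter-arrival times $\sigma_{i+1}-\sigma_i$. I would then construct the regeneration time $T_1$ exactly as in the four-step outline culminating in \eqref{eq:reg-tales}: go to the first $\sigma_i$ past $t_\ell$, check that the coarse-grained ``good'' event around $(X_{\sigma_i},-\sigma_i)$ holds, that the cone-shell configuration is in the good set $G(b_\inn,b_\out,s_\inn,s_\out,h)$ (Lemma~\ref{lem:goodconshells} still applies in view of Lemma~\ref{lem:CPconeconv} and its coarse-grained analogue), and that the decorated path is contained in the appropriate nested cone; iterate over $\ell$ until success. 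The geometric number of attempts combined with the exponentially growing sequence $t_\ell=\lceil\rho^\ell\rceil$ yields polynomial tails for $T_1$ with a large exponent when $\varepsilon_U$ is small enough.

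The new feature, signaled in the paragraph preceding the theorem, is that the ``local configuration of $\eta$ at a regeneration point'' is no longer forced to be $\equiv 1$ on a ball; it is merely drawn from a finite collection of admissible coarse-grained patterns. Consequently, the pairs $(T_{i+1}-T_i,\,X_{T_{i+1}}-X_{T_i})$ are no longer i.i.d., but by the Markov property of $\eta$ and the independence built into the cone-shell construction, they form (together with the regeneration-point pattern) a Markov chain with finite state space for the pattern component. I would verify that this chain is irreducible and aperiodic (any admissible pattern can be reached in a bounded number of steps with positive probability, using the same cone-shell coupling), hence has a unique invariant distribution and is uniformly ergodic. The tail bounds from the previous step give finite $b$-th moments (some $b>2$) of the increments under the stationary law. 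The LLN and annealed CLT then follow from standard results on additive functionals of uniformly ergodic Markov chains; the functional CLT is obtained along the same lines as in the proof of Theorem~4.1 of \cite{Sznitman:2000}, and the speed is zero by the spatial point reflection symmetry \eqref{eq:point-refl-inv}.

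The main obstacle is the bookkeeping in the Markov chain step: one must show that the pattern observed at $T_{i+1}$ depends on the history only through the pattern at $T_i$, which requires a careful argument that at each regeneration time the cone-shell coupling has genuinely erased all information about $\eta$ and $\omega$ outside the inner cone beyond the pattern around $X_{T_i}$. This is precisely where the nested cones and the ``good configuration'' set $G(b_\inn,b_\out,s_\inn,s_\out,h)$ from \eqref{def:goodconshells} are indispensable, and checking that the conditional law of the future coincides with the law under the pattern-conditioned stationary measure is the technically delicate part of the proof.
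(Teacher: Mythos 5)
Your proposal reproduces the paper's strategy faithfully: coarse-grain the environment onto a supercritical oriented-percolation-like grid, re-run the $\sigma_i$ and nested-cone construction on block scale, and then observe that because the local $\eta$-configuration at a regeneration point is only known to lie in the finite good set $G_\eta$ (rather than being $\equiv 1$), the regeneration increments form a Markov chain whose finite pattern component is controlled via Assumption~\ref{ass:irred} and whose additive functional yields the LLN/CLT. One detail worth making explicit, which the paper records as $\widehat Y_i$, is that the ``pattern'' state must include not only the local $\eta$-configuration around the regeneration point but also the walker's fine-scale displacement relative to the center of the enclosing coarse-grained block; without this the chain does not determine $X_{T_i}$ itself, and the Markov property at the original scale breaks. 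Also, in the abstract setup the vanishing of the speed is guaranteed by \eqref{eq:abstr-symm} (the point-reflection property of $\varphi_X$), not by \eqref{eq:point-refl-inv} from Section~\ref{sect:model1}, though the two are analogous.
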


A concrete example satisfying the abstract assumptions of
Sections~\ref{subsect:abstr-assumpt}
and~\ref{subsect:abstr-assumpt-walk} will be given in
Section~\ref{sect:model.real}. They can also be verified for the
oriented random walk on the backbone of the oriented percolation
cluster which was treated in \cite{BCDG13} using simpler, but related,
methods.

\subsection{Assumptions for the environment}
\label{subsect:abstr-assumpt}
We now formulate two assumptions on the random environment. The first
assumption requires that the environment is Markovian (in the positive
time direction), and that there is a `flow construction' for this
Markov process, coupling the processes with different starting
conditions. The second assumption then allows to use the
coarse-graining techniques and the links with oriented percolation.

Formally, let
\begin{align*}
  U\coloneqq \{U(x,n): x\in\Z^d, n\in\Z\}
\end{align*}
be an i.i.d.\ random field, $U(0,0)$ taking values in some Polish
space $E_U$ ($E_U$ could be $\{-1,+1\}$, $[0,1]$, a path space, etc.).
Furthermore for $R_{ \eta} \in \N$ let
$B_{R_{ \eta}} = B_{R_{ \eta}}(0) \subset \Z^d$ be the ball of radius
$R_{ \eta}$ around $0$ with respect to $\sup$-norm. Let
\begin{align*}
  \varphi : \Z_+^{B_{R_{  \eta}}} \times E_U^{B_{R_{\eta}}} \to \Z_+
\end{align*}
be a measurable function.

\begin{assumption}[Markovian, local dynamics, flow construction]
\label{ass:markovdyn}
We assume that $\eta \coloneqq (\eta_n)_{n \in \Z}$ is a Markov
chain with values in $\Z_+^{\Z^d}$ 
whose evolution is local in the sense that $\eta_{n+1}(x)$ depends
only on $\eta_n(y)$ for $y$ in a finite ball around $x$. In particular
we assume that $\eta$ can be realised using the `driving noise'
$U$ as
\begin{equation}
  \label{eq:etadynabstr}
  \eta_{n+1}(x) = \varphi\big( \restr{\theta^x \eta_n}{B_{R_{\eta}}},
  \restr{\theta^x U(\,\cdot\,,n+1)}{B_{R_{\eta}}} \big), \quad
  x\in\Z^d, \; n \in \Z.
\end{equation}
Here $\theta^x$ denotes the spatial shift by $x$, i.e.,
$\theta^x \eta_n(\,\cdot\,) = \eta_n(\,\cdot\,+x)$ and
$\theta^x U(\,\cdot\,, n+1) = U(\,\cdot\,+x,n+1)$. Furthermore
$\restr{\theta^x \eta_n}{B_{R_{ \eta}}}$ and
$\restr{\theta^x U(\,\cdot\,,n+1)}{B_{R_{\eta}}}$ are the
corresponding restrictions to the ball $B_{R_{\eta}}$.
\end{assumption}

Note that \eqref{eq:etadynabstr} defines a flow, in the sense that
given a realisation of $U$ we can construct $\eta$ simultaneously
for all starting configurations. In most situations we have in mind
the constant zero configuration $\underline{0} \in \Z_+^{\Z^d}$ is an
equilibrium for $\eta$, that is,
\begin{align*}
  \varphi\big(\restr{\underline{0}}{B_{R_{\eta}}}, \, \cdot\, \big)
  \equiv 0,
\end{align*}
and there is another non-trivial equilibrium. It will be a consequence
of our assumptions that the latter is in fact the unique non-trivial
ergodic equilibrium.

\smallskip

The second assumption, inspired by \cite{BD07}, allows for comparison
of $\eta$ with a supercritical oriented percolation on a suitable
space-time grid. Loosely speaking, this assumption states that if we
have a good configuration on the bottom of a (suitably big) block and
the driving noise inside the blocks is good, too, then the
configuration on the top of the block is also good and the good region
grows with high probability. Furthermore if we input two good
configurations at the bottom of the block then good noise inside the
block produces a coupled region at the top of the block.

Formally, let $L_{\mathrm{t}}, L_{\mathrm{s}} \in \N$. We use
space-time boxes whose `bottom parts' are centred at points in the
coarse-grained grid $L_{\mathrm{s}} \Z^d \times L_{\mathrm{t}}\Z$.
They will be partly overlapping in the spatial direction but not in
the temporal direction, and we typically think of
$L_{\mathrm{t}} > L_{\mathrm{s}} \gg R_\eta$.

For $(\wt{x}, \wt{n}) \in \Z^d \times \Z$ we set
\begin{equation}
  \mathsf{block}_m(\wt{x},\wt{n}) \coloneqq \big\{ (y,k)
  \in \Z^d \times \Z \, : \,
  \norm{y-L_{\mathrm{s}} \wt{x}} \le
  m L_{\mathrm{s}}, \wt{n} L_{\mathrm{t}} < k \le
  (\wt{n}+1) L_{\mathrm{t}}\big\} ,
\end{equation}
and
$\mathsf{block}(\wt{x},\wt{n}) \coloneqq
\mathsf{block}_1(\wt{x},\wt{n})$;
see Figure~\ref{fig:boxes}. For a set $A \subset \Z^d \times \Z$,
slightly abusing the notation, we denote by $\restr{U}{A}$ the
restriction of the random field $U$ to $A$. In particular,
$\restr{U}{\mathsf{block}_4(\wt{x},\wt{n})}$ is the restriction of $U$
to $\mathsf{block}_4(\wt{x},\wt{n})$ and can be viewed as element of
$E_U^{B_{4 L_{\mathrm{s}}}(0) \times \{1,2,\dots,L_{\mathrm{t}}\}}$.

\smallskip

\begin{assumption}[`Good' noise configurations and propagation of
  coupling]
  There \label{ass:coupling} exist a finite set of `good' local
  configurations $G_{\eta} \subset \Z_+^{B_{2 L_{\mathrm{s}}}(0)}$ and a
  set of `good' local realisations of the driving noise
  $G_U \subset E_U^{B_{4 L_{\mathrm{s}}}(0) \times
    \{1,2,\dots,L_{\mathrm{t}}\}}$ with the following properties:
  \begin{itemize}
  \item For a suitably small $\varepsilon_U$,
    \begin{align}
      \label{eq:goodblockprob}
      \Pr\big( \restr{U}{\mathsf{block}_4(0,0)} \in G_U \big)
      \ge 1-\varepsilon_U
    \end{align}
  \item For any $(\wt{x}, \wt{n}) \in \Z^d \times \Z$ and any
    configurations
    $\eta_{\wt{n} L_{\mathrm{t}}}, \eta'_{\wt{n} L_{\mathrm{t}}} \in
    \Z_+^{\Z^d}$ at time $\wt{n} L_{\mathrm{t}}$,
    \begin{align}
      \label{eq:contraction}
      \begin{split}
        & \restr{\eta_{\wt{n} L_{\mathrm{t}}}}{B_{2 L_{\mathrm{s}}}
          (L_{\mathrm{s}} \wt{x})}, \, \restr{\eta'_{\wt{n}
            L_{\mathrm{t}}}}{B_{2 L_{\mathrm{s}}} (L_{\mathrm{s}}
          \wt{x})} \in G_{\eta} \quad \text{and} \quad
        \restr{U}{\mathsf{block}_4(\wt{x},\wt{n})} \in G_U  \\[+0.5ex]
        & \Rightarrow \;\; \eta_{(\wt{n}+1) L_{\mathrm{t}}}(y) =
        \eta'_{(\wt{n}+1) L_{\mathrm{t}}}(y) \quad \text{for all $y$
          with} \; \norm{y-L_{\mathrm{s}}
          \wt{x}} \le 3 L_{\mathrm{s}}  \\
        & \qquad \text{and} \quad \restr{\eta_{(\wt{n}+1)
            L_{\mathrm{t}}}}{B_{2 L_{\mathrm{s}}} (L_{\mathrm{s}}
          (\wt{x}+\wt{e}))} \in G_{\eta} \; \text{for all $\wt{e}$
          with} \; \norm{\wt{e}} \le 1,
      \end{split}
      \\ \intertext{and}
      \label{eq:propagation.coupling}
        & \restr{\eta_{\wt{n} L_{\mathrm{t}}}}{B_{2 L_{\mathrm{s}}}(L_{\mathrm{s}} \wt{x})}
          =  \restr{\eta'_{\wt{n} L_{\mathrm{t}}}}{B_{2 L_{\mathrm{s}}}(L_{\mathrm{s}} \wt{x})}
          \quad \Rightarrow \quad
          \eta_k(y) = \eta'_k(y) \;\; \text{for all} \; (y,k) \in
          \mathsf{block}(\wt{x},\wt{n}),
    \end{align}
    where $\eta=(\eta_n)$ and $\eta'=(\eta'_n)$ are given by
    \eqref{eq:etadynabstr} with the same $U$ but possibly different
    initial conditions.
  \item There is a fixed (e.g., $L_{\mathrm{s}}$-periodic or even
    constant in space) reference configuration
    $\eta^{\mathrm{ref}} \in \Z_+^{\Z^d}$ such that
    $\restr{\eta^{\mathrm{ref}}}{B_{2 L_{\mathrm{s}}}(L_{\mathrm{s}}
      \wt{x})} \in G_{\eta}$ for all $\wt{x} \in \Z^d$.
  \end{itemize}
\end{assumption}

Note that if the event in
\eqref{eq:contraction}--\eqref{eq:propagation.coupling} occurs then a
coupling of $\eta$ and $\eta'$ on $B_{2 L_{\mathrm{s}}}(L_{\mathrm{s}}
\wt{x}) \times \{\wt{n} L_{\mathrm{t}} \}$ has propagated to
$B_{2 L_{\mathrm{s}}}(L_{\mathrm{s}}(\wt{x}+\wt{e})) \times \{(\wt{n}+1) L_{\mathrm{t}} \}$
for $\norm{\wt{e}} \le 1$ and also the fact that the local
configuration is `good' has propagated. The event in
\eqref{eq:contraction} enforces propagation of goodness and can
also be viewed as a contractivity property of the local dynamics.
In other words the flow tends to merge local configurations once they
are in the `good set'.

\begin{figure}
  \centering
  \begin{tikzpicture}
    \draw[->,thick,] (-6,-1) -- (6,-1);
    \draw (6.2,-1.3) node {\footnotesize $\Z^d$};

    \draw[dotted] (0,2)-- (0,-1)
                  (0.8,2)-- (0.8,-1)
                  (1.6,2)-- (1.6,-1)
                  (2.4,2)-- (2.4,-1)
                  (3.2,2)-- (3.2,-1)
                  (4,2)-- (4,-1)
                  (-0.8,2)-- (-0.8,-1)
                  (-1.6,2)-- (-1.6,-1)
                  (-2.4,2)-- (-2.4,-1)
                  (-3.2,2)-- (-3.2,-1)
                  (-4,2)-- (-4,-1);

    \draw (0,-1) -- (0,-1.2) (3.2,-1) -- (3.2,-1.2) (-3.2,-1) -- (-3.2,-1.2);
    \draw (0,-1.5) node {\footnotesize $\wt x L_{\mathrm{s}}$};
    \draw (3.2,-1.5) node {\footnotesize $(\wt x+K_\eta) L_{\mathrm{s}}$};
    \draw (-3.2,-1.5) node {\footnotesize $(\wt x -K_\eta) L_{\mathrm{s}}$};

    \draw[->,thick,] (-6,-1) -- (-6,2.7);
    \draw (-6.3,2.6) node {\footnotesize $\Z$};

    \draw[dotted] (-6,0) -- (-4,0)
                  (-6,2) -- (4,2);

    \draw (-6.5,0) node {\footnotesize $\wt n L_{\mathrm{t}}$};
    \draw (-6.75,2) node {\footnotesize $(\wt n +1) L_{\mathrm{t}}$};

    \draw[very thick] (-0.8,0.1) -- (-0.8,2) -- (0.8,2) -- (0.8,0.1) -- cycle;
    \draw[very thick,dashed] (-3.2,0) -- (-1.3,2) -- (1.3,2)-- (3.2,0)--cycle;
    \begin{scope}
      \path [fill=black,opacity=0.15] (-4,0) -- (-4,2) -- (4,2) -- (4,0) -- cycle;
    \end{scope}
 \end{tikzpicture}
 \caption{Locality of the construction of $(\eta_n)$ on the block
   level for $d=1$. If $U$ is known in the grey region and
   $\eta_{\wt n L_{\mathrm{t}}}$ is known on the bottom of the dashed
   trapezium then the configurations $\eta_{k}$ are completely
   determined inside $\mathsf{block} (\wt{x},\wt{n})$ drawn in solid
   lines.}
  \label{fig:boxes}
\end{figure}
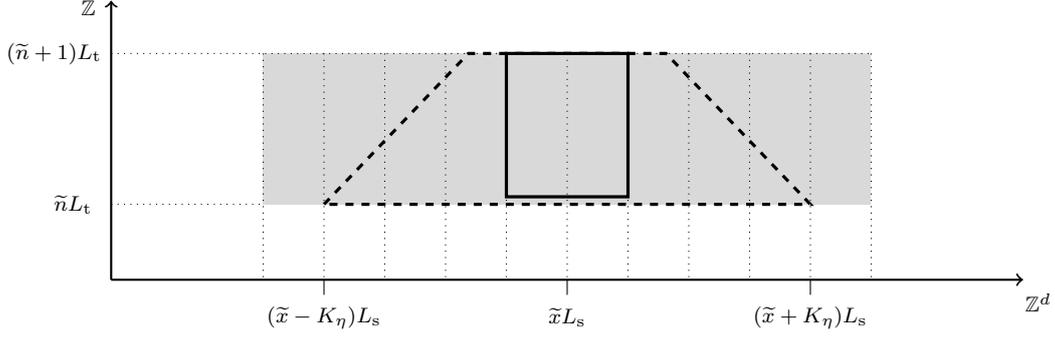

\begin{remark}[Locality on the block level]
  Put \label{rem:localityforblocks}
  \begin{equation}
    K_{\eta} \coloneqq R_{\eta} \big( \lceil
    \tfrac{L_{\mathrm{t}}}{L_{\mathrm{s}}} \rceil + 1 \big) .
  \end{equation}
  From the local construction of $\eta$ given in
  \eqref{eq:etadynabstr} it follows easily (see
  Figure~\ref{fig:boxes}) that for fixed
  $(\wt{x},\wt{n}) \in \Z^d \times \Z$ the values
  $\eta_n(x)$ for
  $(x,n) \in \mathsf{block}(\wt{x},\wt{n})$ are
  completely determined by $\eta_{\wt{n} L_{\mathrm{t}}}$
  restricted to
  $B_{K_{ \eta} L_{\mathrm{s}}}(\wt{x} L_{\mathrm{s}})$ and $U$
  restricted to
  $\cup_{\norm{\wt{y}} \le K_{\eta}}
  \mathsf{block}(\wt{x}+\wt{y},\wt{n})$.
\end{remark}


Using the above assumptions, it is fairly standard to couple $\eta$ to
an oriented percolation cluster. Recall the notation in
Section~\ref{sec:defin-model-results} and in particular the definition
of the stationary discrete time contact process in \eqref{eq:CP}.

\begin{lemma}[Coupling with oriented percolation]
  \label{lem:abstr-OCcoupl}
  Put
  \begin{align}
    \wt{U}(\wt{x}, \wt{n}) \coloneqq
    \ind{\restr{U}{\textstyle
    \mathsf{block}_4(\wt{x},\wt{n})} \in G_U},
    \quad (\wt{x}, \wt{n}) \in \Z^d \times \Z.
  \end{align}
  If $\varepsilon_U$ is sufficiently small,
  we can couple $\wt U(\wt x, \wt n)$ to an
  i.i.d.\ Bernoulli random field
  $\wt{\omega}(\wt{x}, \wt{n})$ with
  $\Pr(\wt{\omega}(\wt{x}, \wt{n})=1) \ge 1 -
  \varepsilon_{\wt{\omega}}$
  such that $\wt{U} \ge \wt{\omega}$,
  and $\varepsilon_{\wt{\omega}}$ can be chosen small
  (how small depends on $\varepsilon_U$, of course).

  Moreover, the process $\eta$ then has a unique non-trivial
  ergodic equilibrium and one can couple a stationary process
  $\eta = (\eta_n)_{n\in\Z}$ with $\eta_0$ distributed according to
  that equilibrium with $\wt \omega $ so that
  \begin{align}
    \label{e:tildeG}
    \wt{G}(\wt{x}, \wt{n}) \coloneqq
    \wt U(\wt x,\wt n)
    \ind{\restr{\eta_{\wt{n} L_{\mathrm{t}}}}{B_{2
    L_{\mathrm{s}}}(L_{\mathrm{s}} \wt{x})}\in G_\eta}
    \ge \wt{\xi}(\wt{x}, \wt{n}),
    \quad (\wt{x}, \wt{n}) \in \Z^d \times \Z
  \end{align}
  where
  $\wt{\xi}\coloneqq \{\wt{\xi}(\wt{x}, \wt{n}):
  \wt x \in \Z^d, \wt n \in \Z\}$ is the discrete time contact process
  defined by
  \begin{align}
    \label{eq:CP-coarse-grained}
    \wt{\xi}(\wt{x}, \wt{n}) \coloneqq  \ind{\Z^d
    \times \{-\infty\} \to^{\wt \omega} (x,n)}.
  \end{align}
\end{lemma}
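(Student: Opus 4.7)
The plan has three stages. First, I would establish the stochastic domination of $\wt U$ by an i.i.d.\ Bernoulli field. By construction, $\wt U(\wt x, \wt n)$ is a measurable function of $U$ restricted to $\mathsf{block}_4(\wt x, \wt n)$; since the $\mathsf{block}_4$'s are disjoint in time for distinct $\wt n$ and their spatial projections overlap only for $\|\wt x - \wt x'\|$ bounded by a constant $m$ depending on the ratio of block radii, the field $\wt U$ is $m$-dependent. Combined with the uniform bound \eqref{eq:goodblockprob}, the classical Liggett--Schonmann--Stacey domination theorem produces on the same probability space an i.i.d.\ Bernoulli field $\wt\omega$ with $\wt\omega \leq \wt U$ pointwise and with density $1-\varepsilon_{\wt\omega}$ arbitrarily close to $1$ provided $\varepsilon_U$ is small enough. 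Choosing $\varepsilon_U$ (hence $\varepsilon_{\wt\omega}$) small enough puts the Bernoulli parameter above $p_c$ for oriented percolation on $\Z^d \times \Z$.

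Second, I would construct the stationary equilibrium of $\eta$ via the flow. For $N \in \N$, let $\eta^{(N)}$ be the process driven by $U$ started at time $-N L_{\mathrm{t}}$ from the reference configuration $\eta^{\mathrm{ref}}$. Two such processes $\eta^{(N)}, \eta^{(N')}$ with $N < N'$ agree on $\mathsf{block}(\wt y, \wt n)$ whenever their bottom values on $B_{2 L_{\mathrm{s}}}(L_{\mathrm{s}} \wt y)$ coincide, by \eqref{eq:propagation.coupling}. Using the coupling with supercritical oriented percolation from stage~1, standard arguments show that any fixed finite restriction $\restr{\eta^{(N)}_0}{F}$ stabilises almost surely as $N \to \infty$, yielding a stationary limit $\eta=(\eta_n)_{n\in\Z}$ driven by $U$. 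Uniqueness of the non-trivial ergodic equilibrium follows by the same coupling: any two stationary processes driven by the same $U$ necessarily agree at every space-time point that is reached by a backward open $\wt\omega$-path, and such points form a dense (in fact cofinite, up to a set of small density) subset.

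Third, and most substantively, I would establish the pointwise inequality $\wt G \geq \wt\xi$. Suppose $\wt\xi(\wt x, \wt n) = 1$. Then by \eqref{eq:CP-coarse-grained} in particular $\wt\omega(\wt x, \wt n) = 1$, hence $\wt U(\wt x, \wt n) = 1$, giving the first factor of $\wt G(\wt x, \wt n)$. For the second factor, fix any $M$; the event $\{\wt\xi(\wt x, \wt n) = 1\}$ entails an open $\wt\omega$-path $(\wt x_j)_{-M \leq j \leq \wt n}$ with $\wt x_{\wt n} = \wt x$, $\|\wt x_j - \wt x_{j-1}\| \leq 1$, and $\wt\omega(\wt x_j, j)=1\leq \wt U(\wt x_j, j)$ along the way. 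Starting from $\restr{\eta^{\mathrm{ref}}}{B_{2 L_{\mathrm{s}}}(L_{\mathrm{s}} \wt x_{-M})} \in G_\eta$ at time $-M L_{\mathrm{t}}$, I iterate \eqref{eq:contraction} along this path: at each step $j \to j+1$ the hypothesis of \eqref{eq:contraction} is met (goodness of the local $\eta^{(M),\mathrm{ref}}$-configuration at the bottom of $\mathsf{block}(\wt x_j, j)$ and of the driving noise inside $\mathsf{block}_4(\wt x_j, j)$), so goodness propagates to $B_{2 L_{\mathrm{s}}}(L_{\mathrm{s}} \wt x_{j+1})$ at time $(j+1)L_{\mathrm{t}}$. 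After $\wt n + M$ steps one concludes $\restr{\eta^{(M),\mathrm{ref}}_{\wt n L_{\mathrm{t}}}}{B_{2 L_{\mathrm{s}}}(L_{\mathrm{s}} \wt x)} \in G_\eta$. Letting $M \to \infty$ and invoking the stabilisation from stage~2 transfers the conclusion to the stationary $\eta$.

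The main obstacle is coordinating these three constructions on a single probability space so that the pointwise inequality $\wt G \geq \wt\xi$ is meaningful: the same realisation of $U$ must drive the flow, determine $\wt U$ (hence the coupled $\wt\omega$ and the contact process $\wt\xi$), and produce the stationary $\eta$. The backward-in-$M$ tracing in stage~3 only yields information about the finite-horizon processes $\eta^{(M),\mathrm{ref}}$, and one must ensure that the stationary limit inherits the goodness property uniformly along every infinite backward $\wt\omega$-path. This hinges on the coupling/uniqueness argument of stage~2 being pointwise (not merely distributional), which is why I have phrased the flow construction with a.s.\ stabilisation rather than just weak convergence.
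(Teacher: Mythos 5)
Your proposal is correct and follows essentially the same route as the paper: the Liggett--Schonmann--Stacey device for the domination of the finite-range-dependent field $\wt U$ by an i.i.d.\ Bernoulli field, construction of the stationary $\eta$ by running the flow from $\eta^{\mathrm{ref}}$ at times $-NL_{\mathrm{t}}\to-\infty$ (the paper phrases the stabilisation as convergence of laws on finite windows plus an a.s.\ diagonal subsequence, both resting on the finiteness of determining clusters from Lemma~\ref{lem:DCheight}), and propagation of goodness via \eqref{eq:contraction} along backward open $\wt\omega$-paths to get $\wt G\ge\wt\xi$, which the paper compresses into ``hold by construction''. The only place your argument is looser than it should be is the uniqueness of the non-trivial ergodic equilibrium, where one must also show that an \emph{arbitrary} such equilibrium locally enters $G_\eta$ so the coupling can latch on; the paper itself defers this to \cite[Cor.~4]{BD07}.
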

\begin{proof}
  The first part is standard: Note that the $\wt{U}(\wt{x}, \wt{n})$'s
  are i.i.d.\ in the $\wt{n}$-coordinate, with finite range dependence
  in the $\wt{x}$-coordinate. Using \eqref{eq:goodblockprob},
  \eqref{eq:contraction} and \eqref{eq:propagation.coupling}, we can
  employ e.g.\ the Liggett-Schonman-Stacey device (\cite{LSS97} or
  \cite[Thm.~B26]{Lig99}).

\smallskip

  For the second part consider for each $k \in \N$ the process
  $\eta^{(k)} = (\eta^{(k)}_n)_{n \ge -k L_{\mathrm{t}}}$ which starts
  from $\eta^{(k)}_{-k L_{\mathrm{t}}} = \eta^{\mathrm{ref}}$ and
  evolves according to \eqref{eq:etadynabstr} for $n \ge -k
  L_{\mathrm{t}}$, using given $\wt{U}$'s which are coupled to
  $\wt\omega$'s as above so that $\wt{U} \ge
  \wt{\omega}$ holds. We see from the coupling properties
  guaranteed by Assumption~\ref{ass:coupling} and Lemma~\ref{lem:DCheight}
  below that the law of $\eta^{(k)}$ restricted to any finite
  space-time window converges.  By a diagonal argument we can take a
  subsequence $k_m \nearrow \infty$ such that $\eta_n(x) \coloneqq
  \lim_{m\to\infty} \eta^{(k_m)}_n(x)$ exists a.s.\ for all $(x,n) \in
  \Z^d \times \Z$, then \eqref{e:tildeG} and
  \eqref{eq:CP-coarse-grained} hold by construction.

  The fact that the law of limit is the unique non-trivial ergodic
  equilibrium can be proved analogously to \cite[Cor.~4]{BD07}.
\end{proof}

\begin{remark}[Clarification about the relation between $\wt{\xi}$ and
  $\eta$]
  The contact process $\wt{ \xi}$ is defined here with respect to
  $\wt \omega$ analogously to the definition of the discrete time
  contact process $\eta$ with respect to $\omega$ in \eqref{eq:CP}.
  The rationale behind this change of notation is that throughout the
  paper $\eta$ is a stationary population process (contact process
  in Section~\ref{sect:model1} and logistic BRW in Section
  \ref{sect:model.real}) and the random walk $X$ is interpreted as
  an ancestral lineage of an individual from that population. The
  coarse-grained contact process $\wt{ \xi}$ plays a different
  role. In particular, the knowledge of $\wt{ \xi}$ alone does not
  determine the dynamics of $X$; cf.\ definition of $X$ in \eqref{eq:abstr-walk}.
\end{remark}

Finally, we need the following technical assumption which is
sufficiently strong for our purposes but can be relaxed presumably.

\begin{assumption}[Irreducibility on $G_{\eta}$]
  On \label{ass:irred} $\wt{G}(\wt{x}, \wt{n})$, conditioned on seeing
  a particular local configuration $\chi \in \Z_+^{B_{2
      L_{\mathrm{s}}}(L_{\mathrm{s}} \wt{x})} \cap G_{\eta}$ at the
  bottom of the space-time box [time coordinate $\wt{n}
  L_{\mathrm{t}}$], every configuration $\chi' \in G_{\eta}$ has a
  uniformly positive chance of appearing at the top of the space-time
  box [time coordinate $(\wt{n}+1) L_{\mathrm{t}}$].
\end{assumption}

\begin{remark}
  For the discrete time contact process the above assumptions can be
  checked easily in the case $d=1$ when $p$ is sufficiently close to
  $1$. For $G_{\eta}$ we could for instance take configurations
  $\eta \in \{0,1\}^{\Z^d}$ with
  \begin{align*}
   \# \{ \norm{x} \le L_s/2, \eta (x) = 1\} \ge \frac23L_s.
  \end{align*}
  For $G_U$ we could take configurations of $\omega$'s for which this
  property propagates to the top of the block and its neighbours
  irrespective of the positions of the $1$'s in the initial
  configuration (cf.\ construction in the proof of
  Lemma~\ref{lem:CPconeconv}). For $d \ge 2$ one can reduce the
  argument to the one-dimensional case.
\end{remark}

\subsection{Assumptions for random walk}
\label{subsect:abstr-assumpt-walk}
We now state the assumptions for the random walk $X=(X_k)_{k
  =0,1,\dots}$ in the random environment generated by $\eta$. To this
end let $\widehat U \coloneqq (\widehat{U}(x,k) : x \in \Z^d, k \in
\Z_+)$ be an independent space-time i.i.d.\ field of random variables
uniformly distributed on $(0,1)$. Furthermore let
\begin{align}
  \label{eq:phix}
  \varphi_{ X} : \Z_+^{B_{R_{ X}}} \times \Z_+^{B_{R_{ X}}}
  \times [0,1] \to B_{R_{ X}}
\end{align}
a measurable function, where $R_{ X} \in \N$ is an upper bound on
the jump size as well as on the dependence range. Given $\eta$, let
$X_0=0$ and put
\begin{equation}
  \label{eq:abstr-walk}
  X_{k+1} \coloneqq X_k + \varphi_{ X}\big(\restr{\theta^{X_k}
    \eta_{-k}}{B_{R_{ X}}},
  \restr{\theta^{X_k} \eta_{-k-1}}{B_{R_{ X}}} , \widehat{U}(X_k,k) \big),
  \quad k =0,1,\dots.
\end{equation}
Note that, as usual, forwards time direction for $X$ is backwards
time direction for $\eta$.


\begin{assumption}[Closeness to SRW while on $\wt{G}=1$]
  A \label{ass:cl-srw} walker with dynamics \eqref{eq:abstr-walk}
  starting from the middle half of the top of a box with $\wt{G}(\wt
  x,\wt n)=1$ stays inside the box with high probability:
\begin{align}
  \label{eq:abstr-sb}
  \min_{z \, : \, \norm{z-\wt x} \le L_{\mathrm{s}}/2} \Pr\Big(
  \max_{(n-1) L_{\mathrm{t}} < k \le n L_{\mathrm{t}}} \norm{X_k-z}
  \le \frac{L_{\mathrm{s}}}{4} \, \Big| \,
  X_{(n-1) L_{\mathrm{t}}}=z, \wt{G}(\wt x,\wt n)=1,  \eta \Big)
  \ge 1 - \varepsilon.
\end{align}
\end{assumption}

\begin{remark}
  \begin{enumerate}[(a)]
  \item Note that \eqref{eq:abstr-sb} translates into the upper bound
    $\varepsilon R_{ X} + L_{\mathrm{s}}/(4L_{\mathrm{t}})$ for the speed
    of the walk $X$ on a block satisfying $\wt{G}(\wt x,\wt n)=1$. The
    factor $\frac14$ in $\frac{L_{\mathrm{s}}}{4}$ inside
    \eqref{eq:abstr-sb} is somewhat arbitrary. Depending on
    $\varepsilon_U$ and the ratio of $L_{\mathrm{s}}$ to
    $L_{\mathrm{t}}$ one could use a different factor.
  \item The simple Assumption~\ref{ass:cl-srw} allows to obtain a
    rough a priori bound on the speed of the walk and suffices for our
    purposes here, a more elaborate version would require successful
    couplings of the coordinates of $X$ with true random walks
    with a small drift while on the box, similar to the proof of
    Lemma~\ref{lem:aprioribd}.
  \end{enumerate}
\end{remark}

\begin{assumption}[Symmetry of $\varphi_{ X}$ w.r.t.\ point reflection]
  Let $\varrho$ be the (spatial) point reflection operator acting on
  $\eta$, i.e., $\varrho \eta_k(x) = \eta_k(-x)$ for any $k \in \Z$ and
  $x \in \Z^d$. We assume
  \begin{align}
    \label{eq:abstr-symm}
    \varphi_{ X}\bigl(\restr{\varrho
    \eta_0}{B_{R_{ X}}},\restr{\varrho \eta_{-1}}{B_{R_{ X}}},
    \widehat{U}(0,0)\bigr) = -
    \varphi_{ X} \bigl(\restr{\eta_0}{B_{R_{
    X}}},\restr{\eta_{-1}}{B_{R_{ X}}},
    \widehat{U}(0,0)\bigr).
  \end{align}
\end{assumption}


Note that \eqref{eq:abstr-symm} guarantees that the averaged speed of
$X$ will be $0$.

\subsection{The determining cluster of a block}

We now explain how Theorem~\ref{thm:abstr-regen} can be proved using
similar ideas as in Section~\ref{sect:model1}. In order to avoid
repetitions and to keep the length of the paper acceptable, we only
explain the major differences to the proof of Theorem~\ref{thm:LLNuCLTmodel1}.

The main change that should be dealt with is the fact that the
construction of the random walk $X$ requires not only the
knowledge of the coarse-grained oriented percolation $\wt { \xi}$,
but also of the underlying random environment $\eta $. This
additional dependence on $\eta $ should be controlled at
regeneration times. To tackle this problem,
Assumption~\ref{ass:coupling} and Lemma~\ref{lem:abstr-OCcoupl} play
the key role. By this lemma, the value of $\eta(x,n) $ can be
reconstructed by looking only at the driving noise $U$ in certain
finite set `below' $(x,n)$.

\smallskip

Formally, for $(\wt{x},\wt{n}) \in \Z^d \times \Z$ we define its
determining cluster $\mathsf{DC}(\wt{x},\wt{n})$ by the following
recursive algorithm:
\begin{enumerate}
\item Initially, put $\wt{k}\coloneqq\wt{n}$,
  $\mathsf{DC}(\wt{x},\wt{n})\coloneqq \{
  (\wt{x},\wt{n}) \}$.
\item If $\wt{\xi}(\wt{y},\wt{k})=1$ for all
  $(\wt{y},\wt{k}) \in
  \mathsf{DC}(\wt{x},\wt{n})$ : Stop.
\item Otherwise, for all blocks where this condition fails, add every
  block one time layer below that could have influenced it
  (cf.~Remark~\ref{rem:localityforblocks}), that is replace
  $ \mathsf{DC}(\wt{x},\wt{n})$ by
  \begin{align}
    \mathsf{DC}(\wt{x},\wt{n}) \cup
    \big\{ (\wt{z},\wt{k}-1)
    : \norm{\wt{z}-\wt{y}} \le K_\eta \;
    \text{for some $\wt{y}$ with $\wt{\xi}(\wt{y},\wt{k})=0$}\big\},
  \end{align}
  put $\wt{k}\coloneqq\wt{k}-1$ and go back to Step~2.
\end{enumerate}
\begin{lemma}
  \label{lem:DCheight}
  For $\varepsilon_U$ small enough, the height (and the diameter)
  of $\mathsf{DC}(\wt{x},\wt{n})$, defined as
\begin{align}
  \mathrm{height}(\mathsf{DC}(\wt{x},\wt{n})) \coloneqq
  \max\big\{\wt{n}- \wt{k} :
  (\wt{y},\wt{k})
  \in \mathsf{DC}(\wt{x},\wt{n}) \},
\end{align}
is finite a.s.\ with
exponential tail bounds.
\end{lemma}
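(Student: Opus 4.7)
The plan is to reduce the height of $\mathsf{DC}(\wt x, \wt n)$ to a large-deviation estimate for the coarse-grained oriented percolation $\wt\xi$, and then invoke Lemma~\ref{lem:drysitesbd} at the coarse-grained scale. The key geometric observation I would establish first is: if $\mathrm{height}(\mathsf{DC}(\wt x, \wt n)) \ge h$, then by unrolling the recursion in the algorithm there must exist a descending chain of bad sites $\wt y_0 = \wt x, \wt y_1, \ldots, \wt y_{h-1}$ in $\Z^d$ with $\norm{\wt y_i - \wt y_{i-1}} \le K_\eta$ and $\wt\xi(\wt y_i, \wt n - i) = 0$ for every $i = 0, 1, \ldots, h-1$. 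Indeed, in order for step~3 of the algorithm to produce a new layer at time $\wt n - i$, there must be at least one point $\wt y_{i-1}$ in the cluster at time $\wt n - i + 1$ with $\wt\xi(\wt y_{i-1}, \wt n - i + 1) = 0$, and that point lies within $K_\eta$ of an analogous bad point at the previous layer, all the way back to $\wt y_0 = \wt x$.

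With this chain structure isolated, I would invoke Lemma~\ref{lem:abstr-OCcoupl}, which (for small $\varepsilon_U$) exhibits $\wt\xi$ as the stationary discrete-time contact process on $\Z^d \times \Z$ with parameter $\wt p \coloneqq 1 - \varepsilon_{\wt\omega}$ arbitrarily close to $1$. Since the time coordinates along any such chain are all distinct, Lemma~\ref{lem:drysitesbd} applied to $\wt\xi$ yields, for any fixed chain $(\wt y_0, \ldots, \wt y_{h-1})$,
\[
\Pr\bigl(\wt\xi(\wt y_i, \wt n - i) = 0 \text{ for } i = 0, \ldots, h-1\bigr) \le \varepsilon(\wt p)^h,
\]
where $\varepsilon(\wt p) \to 0$ as $\wt p \nearrow 1$. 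A union bound over the at most $(2K_\eta + 1)^{d(h-1)}$ possible such chains starting from $\wt y_0 = \wt x$ then gives
\[
\Pr\bigl(\mathrm{height}(\mathsf{DC}(\wt x, \wt n)) \ge h\bigr) \le \bigl[(2K_\eta + 1)^d \varepsilon(\wt p)\bigr]^h,
\]
which decays exponentially in $h$ once $\varepsilon_U$ is small enough that $(2K_\eta + 1)^d \varepsilon(\wt p) < 1$. The exponential bound on the diameter is then immediate, since a cluster of height $h$ is contained in the spatial ball of radius $h K_\eta$ around $\wt x$.

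The main obstacle I expect is the geometric step. The determining cluster balloons out by $K_\eta$ per layer, so at first glance one might worry that a rich pattern of bad sites is required to sustain growth to height $h$. The resolution is that only a single $\wt\xi = 0$ point per layer is needed to trigger an extension, and following those points backwards through the iterations recovers the chain above. Once that reduction is in place, the rest is essentially a coarse-grained application of the counting argument used in the proof of Lemma~\ref{lem:drysitesbd}.
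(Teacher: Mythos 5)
Your proposal is correct and is essentially the argument the paper intends: its proof of this lemma is a one-line pointer to the proof of Lemma~\ref{lem:drysitesbd} (and to Lemma~7 of \cite{Dur92} and Lemma~14 of \cite{BD07}), i.e.\ exactly the combination of a descending chain of $\wt\xi=0$ sites at distinct times, the exponential tail for the associated finite backwards clusters, and a union bound over the at most $(2K_\eta+1)^{d(h-1)}$ possible chains. Your packaging — invoking Lemma~\ref{lem:drysitesbd} as a black box for the coarse-grained contact process $\wt\xi$ of Lemma~\ref{lem:abstr-OCcoupl} rather than rerunning its proof — is a clean and valid way to make that reference precise.
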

\begin{proof}
  This can be shown as in the proof of Lemma~\ref{lem:drysitesbd}, see
  alternatively Lemma~7 in~\cite{Dur92}, or proof of Lemma~14
  in~\cite{BD07}.
\end{proof}

\begin{remark}
  On \label{rem:wblr} $\{ \wt{\xi}(\wt{x},\wt{n})=1\}$,
  $\restr{\eta}{\mathsf{block}(\wt{x},\wt{n})}$ is a function of
  local randomness. In fact it is then determined by
  $\restr{U}{\mathsf{block}_5(\wt{x},\wt{n}) \cup
    \mathsf{block}_5(\wt{x},\wt{n}-1)}$.
  Thus, $\eta$ on $\mathsf{block}(\wt{x},\wt{n})$ is determined by
  the `wet boundary' plus local randomness in a slightly `thickened'
  version of $\mathsf{DC}(\wt{x},\wt{n})$ which is the analogue of
  the `determining triangle' $D(x,n)$ from \eqref{eq:Dxn} in this
  coarse-grained context.

  To see this, consider the system $\eta'\coloneqq(\eta'_n :
  (\wt{n}-1)L_{\mathrm{t}} \le n \le (\wt{n}+1)L_{\mathrm{t}})$
  which starts from $\eta'_{(\wt{n}-1)L_{\mathrm{t}}} =
  \eta^{\mathrm{ref}}$ and uses the fixed boundary condition
  $\eta'_n(y)=\eta^{\mathrm{ref}}(y)$ for
  $\norm{y-L_{\mathrm{s}} \wt{x}} > 5 L_{\mathrm{s}}$ and
  $(\wt{n}-1)L_{\mathrm{t}} < n \le (\wt{n}+1)L_{\mathrm{t}}$. For
  $(y,n) \in \mathsf{block}_5(\wt{x},\wt{n}) \cup
  \mathsf{block}_5(\wt{x},\wt{n}-1)$ the values
  $\eta'_n(y)$ are computed using \eqref{eq:etadynabstr} with the same
  realisations of $U$ as the true system $\eta$.

  Note that $\wt{\xi}(\wt{x},\wt{n})=1$ implies that
  $\restr{U}{\mathsf{block}_4(\wt{x},\wt{n})} \in G_U$ and
  \begin{align*}
  \max_{\norm{\wt{e}} \le 1} \indset{G_U}\big(
  \restr{U}{\mathsf{block}_4(\wt{x}+\wt{e},\wt{n}-1)}\big)
  \indset{G_\eta}\big( \restr{\eta_{(\wt{n}-1)
  L_{\mathrm{t}}}}{B_{2 L_{\mathrm{s}}}(L_{\mathrm{s}}
  (\wt{x}+\wt{e}))} \big) = 1
  \end{align*}
  Now use \eqref{eq:contraction} to see that
  $\eta'_{\wt{n}L_{\mathrm{t}}}$ and $\eta_{\wt{n}L_{\mathrm{t}}}$
  agree on $B_{2 L_{\mathrm{s}}}(L_{\mathrm{s}} \wt{x})$, then use
  that and \eqref{eq:contraction}--\eqref{eq:propagation.coupling} to
  verify that $\eta'$ and $\eta$ agree on
  $\mathsf{block}(\wt{x},\wt{n})$.
\end{remark}

\subsection{A regeneration structure}
\label{subsect:regen}

In this section we construct regeneration times similar to those
constructed in Section~\ref{subs:regen-model1}. First we need to
introduce the analogue of the `tube around the path' and its
`decoration with determining triangles'; cf.\ equations
\eqref{eq:tuben}, \eqref{eq:Dxn} and \eqref{eq:dtuben}. We set
\begin{align}
  \label{eq:tuben-slice}
  \wt V_{\wt m}
  & \coloneqq \{ \wt x: \exists k, (\wt m-1)
    L_{\mathrm{t}} \le k \le \wt m L_{\mathrm{t}},
    \norm{X_k - \wt x L_{\mathrm{t}}} \le
    L_{\mathrm{s}} + R_{ X}  \}, \\
  \label{eq:tuben-cg}
  \mathsf{Tube}_{\wt n}
  & \coloneqq  \bigcup_{\wt m \le \wt n} \wt
    V_{\wt m} \times \{\wt m\} ,\\
  \label{eq:dtuben-cg}
  \mathsf{DTube}_{\wt n}
  & \coloneqq \bigcup_{(\wt x, \wt \jmath) \in
    \mathsf{Tube}_{\wt n}} \mathsf{DC}(\wt{x},\wt
    \jmath).
\end{align}
We define the coarse-graining function $\wt \pi: \Z^d \to \Z^d$
by
\begin{align}
  \label{eq:pi-cg}
  \wt \pi(x) = \wt \pi(x_1,\dots,x_d) = (\wt x_1,
  \dots, \wt x_d) \coloneqq \Bigl( \Big\lceil
  \frac{x_1}{L_{\mathrm{s}}} - \frac12 \Big\rceil, \dots,   \Big\lceil
  \frac{x_d}{L_{\mathrm{s}}} - \frac12 \Big\rceil \Bigr),
\end{align}
and denote by $\wt \rho(x)$ the relative position of $x$ inside
the block centred at $\wt x L_{\mathrm{s}}$, i.e.\ we set
\begin{align}
  \label{eq:pho-cg}
  \wt \rho(x) \coloneqq x - \wt x L_{\mathrm{s}}.
\end{align}
We define the \emph{coarse-grained random walk}
$\wt{X} = (\wt X_{\wt n})_{\wt n=0,1,\dots}$ and the \emph{relative
  positions} $\wt{ Y} = (\wt Y_{\wt n})_{\wt n=0,1,\dots}$ by
\begin{align}
  \label{eq:tildXY}
  \wt X_{\wt n} \coloneqq \wt \pi( X_{\wt
  n L_{\mathrm{t}}}) \quad \text{and} \quad
  \wt Y_{\wt n} \coloneqq \wt \rho( X_{\wt
  n L_{\mathrm{t}}}).
\end{align}
We need to keep track of the relative positions to preserve the
Markovian structure. Note that between the original random walk and
the coarse-grained components just defined we have the following
relation:
\begin{align*}
 X_{\wt n L_{\mathrm{t}}} = \wt X_{\wt n}
  L_{\mathrm{s}} + \wt Y_{\wt n}.
\end{align*}
We define the filtration $\wt{\mathcal F}\coloneqq (
\wt{\mathcal{F}}_{\wt n})_{\wt n = 0,1,\dots}$ by
\begin{align}
  \label{eq:tilde-filtration}
  \wt{\mathcal{F}}_{\wt n} \coloneqq \sigma\Bigl((\wt
  X_{\wt \jmath} , \wt Y_{\wt \jmath}) : 0 \le  \wt \jmath
  \le \wt n \bigr) \vee \sigma \bigl( \wt \omega
  (\wt y, \wt \jmath),  \wt \xi
  (\wt y, \wt \jmath), \restr{U}{\textstyle
  \mathsf{block}_4(\wt{y},\wt \jmath )} : (\wt y, \wt \jmath) \in
  \mathsf{DTube}_{\wt n} \Bigr).
\end{align}
To mimic the proofs of Section~\ref{sect:model1} for the model
considered here we need the following ingredients:
\begin{enumerate}
\item As in Lemma~\ref{lem:aprioribd} there exist
  $\wt s_{\mathrm{max}}$ (that is close to $\frac14$ under our
  assumptions) and positive constants $C, c$ such that
  \begin{align}
    \label{eq:apr-bound-cg}
    \Pr\big( \norm{\wt X_{\wt n}} > \wt
    s_{\mathrm{max}} \wt n \big) \leq C e^{-c \wt n}.
  \end{align}
\item For stopping times (analogous to $\sigma$'s in
  \eqref{eq:sigmas}) we set
  \begin{align}
    \label{eq:Dtilde}
    \wt D_{\wt n}
    & \coloneqq \wt n + \max\Big\{
      \mathrm{height}(\mathsf{DC}(\wt{x},\wt{n})):
      \wt x \in \wt V_{\wt n}\Big\}\\ \intertext{and define}
    \label{eq:sigmas-tild}
    \wt \sigma_0 \coloneqq 0, \quad \wt \sigma_{i}
    & \coloneqq \min\Big\{\wt m>\wt \sigma_{i-1} : \max_{\wt
      \sigma_{i-1} \leq \wt n \leq \wt m} \wt D_{\wt n}
      \leq \wt m \Big\}, \; i \ge 1.
  \end{align}
\end{enumerate}

\begin{lemma}
  When \label{lem:regen1-tild} $1-\varepsilon_{\wt \omega}$ is
  sufficiently close to $1$ there exist finite positive constants $c$
  and $C$ so that
  \begin{align}
    \label{eq:sigma-inc-tilde}
    \Pr\big( \wt \sigma_{i+1}-\wt \sigma_i > \wt n \, \big| \,
    \wt{\mathcal{F}}_{\wt \sigma_i} \big) \leq C e^{-c
    \wt n} \quad \text{for all }
    \wt n =1,2,\dots, \;  i =0,1,\dots \; \text{a.s.},
  \end{align}
  in particular, all $\wt \sigma_i$ are a.s.\ finite.
  Furthermore,
  \begin{align}
    \label{eq:omegatauidomination-tilde}
    \mathscr{L}\big( (\wt \omega(\cdot, -\wt
    \jmath -\wt \sigma_i)_{\wt \jmath =0,1,\dots } \, \big| \,
    \wt{\mathcal{F}}_{\wt \sigma_i}
    \big) \succcurlyeq \mathscr{L}\big( (\wt \omega(\cdot,
    -\wt \jmath)_{\wt \jmath =0,1,\dots} \big) \quad
    \text{for every $i=0,1,\dots$ a.s.},
  \end{align}
  where `$\succcurlyeq$' denotes stochastic domination.
\end{lemma}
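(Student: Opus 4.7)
The plan is to mirror the proof of Lemma~\ref{lem:regen1} line by line, with three substitutions appropriate to the coarse-grained setting. The determining triangles $D(x,n)$ are replaced by the determining clusters $\mathsf{DC}(\wt x,\wt n)$; the pointwise zero-estimate $\varepsilon(p)^k$ from Lemma~\ref{lem:drysitesbd} (or its shifted version Corollary~\ref{cor:drysitesbd}) is replaced by the same estimate applied to $\wt\xi$; and the exponential tail bound on the height of $D(x,n)$ (which in the original proof followed from Lemma~\ref{lem:exp-bound-perc}) is replaced by Lemma~\ref{lem:DCheight}. Throughout, $\varepsilon_{\wt\omega}$ may be taken arbitrarily small by choosing $\varepsilon_U$ small, via Lemma~\ref{lem:abstr-OCcoupl}, so in particular $\wt\xi$ is as supercritical as we need.

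First I would verify \eqref{eq:omegatauidomination-tilde}. By definition~\eqref{eq:tilde-filtration}, the filtration $\wt{\mathcal F}_{\wt\sigma_i}$ records the coarse-grained path $(\wt X,\wt Y)$, together with $\wt\xi$, $\wt\omega$ and the driving noise $U$ restricted to blocks in $\mathsf{DTube}_{\wt\sigma_i}$. The very definition of $\wt\sigma_i$ via \eqref{eq:Dtilde}--\eqref{eq:sigmas-tild} enforces that every negative piece of information discovered strictly before $\wt\sigma_i$ (i.e.\ a site with $\wt\xi=0$ encountered in $\mathsf{Tube}$) has been resolved locally, in the sense that the corresponding determining cluster terminates at or before the time $\wt\sigma_i$. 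What remains as information about $\wt\omega$ outside $\mathsf{DTube}_{\wt\sigma_i}$ is only the positive statement that $\wt\xi(\wt y,\wt\jmath)=1$ at certain sites. Since $\{\wt\xi(\wt y,\wt\jmath)=1\}$ is an increasing event in the i.i.d.\ Bernoulli field $\wt\omega$, the FKG inequality delivers the stochastic domination of the conditional law of the `future' $\wt\omega$-field (shifted by $\wt\sigma_i$) by its unconditional i.i.d.\ law.

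Second, I would bound $\Pr(\wt\sigma_1>\wt n)$ by repeating the counting argument of Lemma~\ref{lem:regen1}. The event forces the existence, along the coarse-grained path $\wt X_0,\ldots,\wt X_{\wt n}$, of times $\wt m_1<\cdots<\wt m_k\le \wt n$ and sites $\wt y_{\wt m_j}\in \wt V_{\wt m_j}$ with $\wt\xi(\wt y_{\wt m_j},\wt m_j)=0$, whose determining clusters have heights at least $\wt m_{j+1}-\wt m_j$. Paths of the coarse-grained walk contribute a combinatorial factor at most $R^{\wt n}$ for some fixed $R$; zero-locations inside each relevant $\wt V_{\wt m_j}$ contribute a further polynomial-in-$L_{\mathrm s}$ combinatorial factor; the probability that the indicated sites are simultaneously zeros of $\wt\xi$ is at most $\varepsilon(\wt p)^k$ by Lemma~\ref{lem:drysitesbd} applied to $\wt\xi$; and the probability that the respective determining clusters have the required heights gives a product $\prod_j e^{-c(\wt m_{j+1}-\wt m_j)}$ by Lemma~\ref{lem:DCheight}. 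Summing over $k$ and the $(\wt m_j)$ exactly as in \eqref{eq:Hnk0}--\eqref{eq:Hnk} yields a bound $Ce^{-c\wt n}$ provided $\varepsilon(\wt p)$ is small, which is arranged by taking $\varepsilon_U$ small. The general case $i\ge 1$ then follows by induction on $i$: conditional on $\wt{\mathcal F}_{\wt\sigma_i}$, the shifted environment can be coupled via \eqref{eq:omegatauidomination-tilde} to an i.i.d.\ copy of $\wt\omega$, and the $i=0$ estimate applies, since the tail bound one obtains is monotone in $\wt\omega$.

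The main obstacle is the clean separation of positive from negative information in $\wt{\mathcal F}_{\wt\sigma_i}$. In contrast with Section~\ref{sect:model1}, here $\wt\omega$ is not the basic driving noise but is derived from the richer field $U$ via Lemma~\ref{lem:abstr-OCcoupl}, so the filtration also contains $U$ in $\mathsf{DTube}_{\wt\sigma_i}$ --- information which a priori could encode subtle correlations with $\wt\omega$-values outside that set. The point to verify carefully is that, conditional on the $\wt\omega$-values inside $\mathsf{DTube}_{\wt\sigma_i}$, the additional $U$-values observed there are independent of everything strictly outside $\mathsf{DTube}_{\wt\sigma_i}$ (since $U$ is i.i.d.\ across space-time), so that the FKG step can be run in the $\wt\omega$-variables alone and $\eta$ beyond $\mathsf{DTube}_{\wt\sigma_i}$ remains a functional of independent driving noise as required by the regeneration construction downstream.
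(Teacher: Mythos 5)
Your proposal is correct and follows essentially the same route as the paper, which disposes of this lemma in one line as ``analogous to the proof of Lemma~\ref{lem:regen1} (see also Lemma~\ref{lem:DCheight})'': determining clusters in place of determining triangles, Lemma~\ref{lem:DCheight} in place of the triangle-height tail bound, the counting argument of \eqref{eq:Hnk0}--\eqref{eq:Hnk} for \eqref{eq:sigma-inc-tilde}, and FKG for the $\wt\omega$-field for \eqref{eq:omegatauidomination-tilde}. Your closing observation --- that the $U$-values recorded in $\mathsf{DTube}_{\wt\sigma_i}$ are independent of everything outside by the i.i.d.\ structure of $U$, so the FKG step can be run in the $\wt\omega$-variables alone --- is exactly the point (cf.\ Remark~\ref{rem:wblr}) that makes the analogy legitimate.
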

\begin{proof}
  Analogous to the proof of Lemma~\ref{lem:regen1} (see also Lemma~\ref{lem:DCheight}).
\end{proof}

Similarly to the definition in \eqref{eq:decconetime} we say that
$\wt n$ is a $(b,s)$-cone time point for the decorated path beyond
$\wt m$ (with $\wt m < \wt n$) if
\begin{align}
  \label{eq:decconetime-tilde}
  \begin{split}
    &\mathsf{DTube}_{\wt n} \cap \big( \Z^d \times \{-\wt n,-\wt
    n+1,\dots,- \wt m\} \big) \\
    & \hspace{6em}
    \subset \big\{ (\wt x,-\wt \jmath) : \wt m \le
    \wt \jmath \le \wt n, \norm{\wt x-\wt
      X_{\wt n}} \le b + s(\wt n-\wt \jmath ) \big\}.
  \end{split}
\end{align}
In words, as in Section~\ref{subs:regen-model1} $\wt n$ is a
cone time point for the decorated path beyond $\wt m$ if the
space-time path
$(\wt X_{\wt \jmath},-\wt \jmath)_{\wt \jmath= \wt
  m,\dots, \wt n}$
together with its `tilde'-decorations is contained in the cone
with base radius $b$, slope $s$ and base point
$(\wt X_{\wt n},- \wt n)$.

\begin{lemma}
  \label{lem:xconetimes-tilde}
  There exist suitable $b$ and $s >\wt s_{\mathrm{max}}$
   such that for all finite $\wt{\mathcal{F}}$-stopping times
  $\wt T$ with
  $\wt T \in \{\wt \sigma_i : i \in \N\}$ a.s.\ (i.e.,
  $\wt T=\wt \sigma_J$ for a suitable random index $J$)
  and all $\wt k \in \N$, with
  $\wt T'\coloneqq\inf\{\wt \sigma_i : \wt \sigma_i \ge
  \wt k\}$
  \begin{align}
    \label{eq:xconetimeprob-tilde}
    \Pr\big( \text{$\wt T'$ is a $(b,s)$-cone time point for
    the decorated path beyond $\wt T$}
    \, \big| \, \wt{\mathcal{F}}_{\wt T} \big) \geq 1-\varepsilon \quad
  \end{align}
  a.s.\ on $\{\wt T<\wt k\}$. Furthermore $0<s-\wt
  s_{\mathrm{max}} \ll 1$ can be chosen small.
\end{lemma}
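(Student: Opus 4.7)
The plan is to adapt the proof of Lemma~\ref{lem:xconetimes} to the coarse-grained setup, with $X$, $\sigma_i$, $D(x,n)$, $\mathsf{tube}_n$ and $\mathsf{dtube}_n$ replaced respectively by $\wt X$, $\wt \sigma_i$, $\mathsf{DC}(\wt x,\wt \jmath)$, $\mathsf{Tube}_{\wt n}$ and $\mathsf{DTube}_{\wt n}$. Three ingredients are needed: (i) a conditional a priori speed bound for increments of $\wt X$ starting from regeneration-type stopping times, (ii) control of the overshoot $\wt T' - \wt k$, and (iii) control of the cumulative heights of the determining clusters that decorate $\mathsf{Tube}_{\wt T'}$.

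For (i), combining \eqref{eq:apr-bound-cg} with the stochastic-domination property \eqref{eq:omegatauidomination-tilde} yields, by the same argument as in Lemma~\ref{lem:xincrbd1}, a conditional bound
\[
\Pr\bigl( \norm{\wt X_{\wt k} - \wt X_{\wt \jmath}} > (1+\epsilon)\wt s_{\mathrm{max}}(\wt k - \wt \jmath) \bigm| \wt{\mathcal F}_{\wt T}\bigr) \le C e^{-c(\wt k - \wt \jmath)} \quad \text{on } \{\wt T \le \wt \jmath\},
\]
for $\wt T \in \{\wt \sigma_i : i \in \Z_+\}$. Fix $M>0$ to be tuned and let $\wt B_{\wt T,\wt k}$ be the event that $\norm{\wt X_{\wt k} - \wt X_{\wt \jmath}} \le b - M + s(\wt k - \wt \jmath)$ for all $\wt T \le \wt \jmath < \wt k$. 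Splitting the union over $\wt \jmath$ at a threshold $\wt k - m_0$, the terms with $\wt k - \wt \jmath \le m_0$ are made negligible (even vanishing) by enlarging $b - M$ and using boundedness of the individual increments of $\wt X$, while the far-$\wt k$ terms give exponentially small contributions provided $s > \wt s_{\mathrm{max}}$. Hence $\Pr(\wt B_{\wt T,\wt k}^c \mid \wt{\mathcal F}_{\wt T})$ can be made arbitrarily small.

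For (ii), by Lemma~\ref{lem:regen1-tild} the overshoot $\wt T' - \wt k$ has exponential tails conditional on $\wt{\mathcal F}_{\wt T}$, so by choosing $M$ large enough the event $\{\wt T' - \wt k \le M/(2K_\eta)\}$ has high probability; on its intersection with $\wt B_{\wt T,\wt k}$ a straightforward geometric check shows that $\mathsf{Tube}_{\wt T'} \cap (\Z^d \times [-\wt T', -\wt T])$ fits inside the cone of base radius $b$ and slope $s$ based at $(\wt X_{\wt T'}, -\wt T')$, the buffer $M$ being precisely used to absorb both the overshoot and the spatial thickening $L_{\mathrm s} + R_X$ appearing in the definition \eqref{eq:tuben-slice} of $\wt V_{\wt m}$.

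For (iii), I would introduce $\wt R_{\wt k} \coloneqq \inf\{i\in\Z_+ : \wt \sigma_i \ge \wt k\}$ and $\wt \tau^{(\wt k)}_m$ as in \eqref{eq:tautildes}, and establish the coarse-grained analogue of Lemma~\ref{lem:tautildetails},
\[
\Pr\bigl( \wt R_{\wt k} \ge i + m,\; \wt \tau^{(\wt k)}_m \ge \wt n \bigm| \wt{\mathcal F}_{\wt \sigma_i}\bigr) \le C m^2 e^{-c \wt n},
\]
whose chain-of-inequalities proof transcribes verbatim using Lemma~\ref{lem:regen1-tild} in place of Lemma~\ref{lem:regen1}. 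Coupled with the exponential height-tail of Lemma~\ref{lem:DCheight}, this gives, for suitably tuned $M_1,\epsilon_1$, that the piece of $\mathsf{DTube}_{\wt T'}$ lying inside the $m$-th inter-regeneration gap has vertical extent at most $M_1+\epsilon_1 m$ with probability at least $1 - C m^2 e^{-c(M_1+\epsilon_1 m)}$, summably in $m$. A geometric comparison identical to the last step of the proof of Lemma~\ref{lem:xconetimes} then shows that on the intersection of $\wt B_{\wt T,\wt k}$, the small-overshoot event, and these good decoration events, the whole of $\mathsf{DTube}_{\wt T'} \cap (\Z^d \times [-\wt T', -\wt T])$ fits inside the target cone, which is precisely \eqref{eq:xconetimeprob-tilde}.

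The main obstacle is step (iii): one must combine two independent sources of exponential tails (those of $\wt \tau^{(\wt k)}_m$ and those of the individual $\mathsf{DC}$-heights) and verify that for $s$ only marginally larger than $\wt s_{\mathrm{max}}$ the cumulative decoration still fits in the cone. As in Lemma~\ref{lem:xconetimes}, this is arranged by shrinking $\epsilon_1$ and enlarging $M$, $M_1$ and $b$; the freedom to choose $0 < s - \wt s_{\mathrm{max}} \ll 1$ comes from the same step (i) above, where $s$ was only required to exceed $\wt s_{\mathrm{max}}$ by an arbitrary positive amount.
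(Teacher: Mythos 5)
Your proposal is correct and follows essentially the same route as the paper, which simply states that the proof is analogous to that of Lemma~\ref{lem:xconetimes} with the intermediate results Lemma~\ref{lem:xincrbd1} and Lemma~\ref{lem:tautildetails} adapted to the coarse-grained setting. Your three-step outline --- conditional a priori speed bound for $\wt X$, overshoot control for $\wt T'-\wt k$ via Lemma~\ref{lem:regen1-tild}, and control of the $\mathsf{DC}$-decoration heights via a coarse-grained $\wt\tau^{(\wt k)}_m$-tail bound and Lemma~\ref{lem:DCheight} --- is precisely the intended adaptation, and the tuning of $M$, $M_1$, $\epsilon_1$, $b$ to achieve $0<s-\wt s_{\mathrm{max}}\ll 1$ mirrors the original argument.
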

\begin{proof}
  Analogous to the proof of Lemma~\ref{lem:xconetimes}. Intermediate
  results, that is Lemma~\ref{lem:tautildetails} and
  Lemma~\ref{lem:xincrbd1}, can be adapted to the present
  situation.
\end{proof}

We now define `good configurations' of $\wt \omega$'s
(analogous to \eqref{def:goodconshells}). Recall the definition of a
cone shell in \eqref{eq:coneshell}. Let
$\wt G(b_\inn , b_\out , s_\inn , s_\out ,h) \subset
\{0,1\}^{\mathsf{cs}(b_\inn , b_\out , s_\inn , s_\out ,h)}$
be the set of possible $\wt \omega$-configurations in
$\mathsf{cs}(b_\inn , b_\out , s_\inn , s_\out ,h)$ with the property
\begin{align}
  \label{def:goodconshells-tilde}
  & \forall \,
    \wt \xi (\cdot,0) , \wt \xi' (\cdot,0) \in \{0,1\}^{\Z^d} \; \text{with} \;
    \wt \xi (\cdot,0)|_{\displaystyle B_{b_\out }(0)} = \wt
    \xi' (\cdot,0)|_{\displaystyle B_{b_\out }(0)}
    \equiv 1 \quad \text{and} \notag \\
  & \hspace{1em}\wt \omega \in \{0,1\}^{\Z^d \times \{1,\dots,h\}} \; \text{with} \;
    \wt \omega|_{\mathsf{cs}(b_\inn , b_\out , s_\inn , s_\out ,h)}
    \in \wt G(b_\inn , b_\out , s_\inn , s_\out ,h) \, : \\[1ex]
  & \hspace{3em} \wt \xi(\wt x,\wt n) =
    \wt \xi'(\wt x,\wt n)  \; \text{for all} \;
    (\wt x, \wt n) \in \mathsf{cone}(b_\inn , s_\inn , h) \notag
\end{align}
where $\wt { \xi}$ and $\wt{ \xi}'$ are both constructed from time $0$
using the same $\wt \omega$'s, i.e.\ when $A$ and $A'$ are subsets of
$\Z^d$ with $\indset{A} = \wt \xi(\cdot,0)$ and $\indset{A'} =  \wt
\xi'(\cdot,0)$ then (cf.\ \eqref{eq:CPA})
\begin{align*}
  \wt \xi (\cdot, n) = \indset{\{ \wt x \in \Z^d: A\times \{0 \} \to^{\wt
  \omega} (\wt x, \wt n)\}} \quad \text{and} \quad
  \wt \xi' (\cdot, n) = \indset{\{ \wt x \in \Z^d: A'\times \{0 \} \to^{\wt
  \omega} (\wt x, \wt n)\}}.
\end{align*}
Note that if $\wt \xi(\wt x,0) =1$ in the ball $B_{b_\out }(0)$ and
\[\wt \omega|_{\mathsf{cs}(b_\inn, b_\out, s_\inn, s_\out ,h)} \in \wt
G(b_\inn, b_\out, s_\inn, s_\out, h)\] then
\begin{align*}
  \{\eta_n(x) : (x,n) \in \mathsf{block}
  (\wt{x},\wt{n}),  \;  (\wt{x},\wt{n}) \in
  \mathsf{cone}(b_\inn , s_\inn , h)\}
\end{align*}
is a function of $\eta_0(y)$, $\norm{y} \le b_\out L_{\mathrm{s}}$ and
$\restr{U}{\mathsf{block}_4(\wt x,\wt n)}$,
$(\wt x,\wt n) \in \mathsf{cone}(b_\inn , s_\inn , h)$.
In particular, if we start with different $\eta_0'$ and $U'$ with
$\eta_0'(y) = \eta_0(y)$, $\norm{y} \le b_\out L_{\mathrm{s}}$ and
$\restr{U'}{\mathsf{block}_4(\wt x,\wt n)} =
\restr{U}{\mathsf{block}_4(\wt x,\wt n)}$,
$(\wt x,\wt n) \in \mathsf{cone}(b_\inn , s_\inn , h)$
then
\begin{align*}
  \eta_n(x) = \eta'_n(x) \; \text{ for all } \;  (x,n) \in
  \mathsf{block}
  (\wt{x},\wt{n}),  (\wt{x}, \; \wt{n})
  \in
  \mathsf{cone}(b_\inn , s_\inn , h).
\end{align*}

\begin{proof}[Proof sketch for Theorem~\ref{thm:abstr-regen}]
  We now have all the ingredients for the regeneration construction,
  to imitate the proof of Theorem~\ref{thm:LLNuCLTmodel1}. We again
  choose to keep the arguments more verbal and descriptive, hoping to
  strike a sensible balance between notational precision and
  readability.

 \smallskip

  First we choose a sequence $t_0,t_1,\dots$ with $t_\ell \uparrow
  \infty$ such that \eqref{eq:conenesting2} is satisfied with $\wt
  s_{\max}$ replacing $s_{\max}$ and parameters $b_\out$, $s_\out$,
  $b_\inn$ and $s_\inn$ adapted from Lemma~\ref{lem:xconetimes-tilde}.
  Recall from Remark~\ref{rem:wblr} that on the event $\{
  \wt{\xi}(\wt{x},\wt{n})=1\}$,
  $\restr{\eta}{\mathsf{block}(\wt{x},\wt{n})}$ is determined by
  $\restr{U}{\mathsf{block}_5(\wt{x},\wt{n}) \cup
    \mathsf{block}_5(\wt{x},\wt{n}-1)}$.

  \begin{enumerate}
  \item Go to the first $\wt \sigma_i$ after $t_1$, check if in the
    $b_\out $-neighbourhood of $(\wt X_{\wt \sigma_i},-\wt \sigma_i)$
    we have $\wt { \xi} \equiv 1$, the path (together with its tube
    and decorations) has stayed inside the interior of the
    corresponding conical shell based at the current space-time
    position and the $\wt \omega$'s in that conical shell are in the
    good set as defined in \eqref{def:goodconshells-tilde}. This has
    positive (in fact, very high) probability
    (cf.~Lemma~\ref{lem:xconetimes-tilde}) and if it occurs, we have
    found the `regeneration time'.

  \item If the event fails, we must try again. We successively check
    at times $t_2$, $t_3$, etc.: If not previously successful, at the
    $\ell$-th step let $\wt{\sigma}_{J(\ell)}$ be the first $\wt
    \sigma_i$ after $t_\ell$, check if $\wt{\sigma}_{J(\ell)}$ is a
    cone point for the decorated path beyond $t_{\ell-1}$ with
    $\norm{\wt X_{\wt{\sigma}_{J(\ell)}}} \le \wt s_{\mathrm{max}}
    \wt{\sigma}_{J(\ell)}$, the $\eta$'s in the $b_\out$-neighbourhood
    of $(X_{\tilde{\sigma}_\ell},-\tilde{\sigma}_\ell)$ are $\equiv
    1$, $\wt \omega$'s in the corresponding conical shell are in the
    good set as defined in \eqref{def:goodconshells-tilde} and the
    path (with tube and decorations) up to time $t_{\ell-1}$ is
    contained in the box of diameter $s_\out t_{\ell-1} + b_\out $ and
    height $t_{\ell-1}$. If this all holds, we have found the
    regeneration time.

    (We may assume that $\wt{\sigma}_{J(\ell-1)}$ is suitably close to
    $t_{\ell-1}$, this has very high probability by an adaptation of
    Lemma~\ref{lem:tautildetails}.)

  \item The path containment property holds from some finite $\ell_0$
    on. Given the construction and all the information obtained from
    it up to the $(\ell-1)$-th step, the probability that the other
    requirements occur is uniformly high: For the cone time property
    use Lemma~\ref{lem:xconetimes-tilde} with $\wt k=t_\ell$; use
    \eqref{eq:omegatauidomination-tilde} to verify that the
    probability to see $\wt{ \xi} \equiv 1$ in a box around $(\wt
    X_{\wt{\sigma}_{J(\ell)}},-\wt{\sigma}_{J(\ell)})$ is high; use (a
    notational adaptation of) Lemma~\ref{lem:goodconshells} to check
    that conditional on the construction so far the probability that
    the $\wt \omega$'s in the corresponding conical shell are in the
    good set $\wt G(b_\inn , b_\out , s_\inn , s_\out ,t_\ell)$ is
    high. Note that these $\wt \omega$'s have not yet been looked at.

  \item We thus construct a random time $\wt R_1$ with the following
    properties:
    \begin{enumerate}[(i)]
    \item
      $\wt \xi(\wt X_{\wt R_1}+\wt
      y,\wt R_1) = 1$ for all $\norm{\wt y} \le b_\out$;
    \item the decorated path up to time $\wt R_1$ is in
      $\mathsf{cone}(b_\inn , s_\inn , \wt R_1)$ centred at
      $(\wt X_{\wt R_1},\wt R_1)$;
    \item after centring the cone at base point
      $(\wt X_{\wt R_1},\wt R_1)$,
      $\wt \omega |_{\mathsf{cs}(b_\inn , b_\out , s_\inn ,
        s_\out ,\wt R_1)}$
      lies in the good set
      $\wt G(b_\inn , b_\out , s_\inn , s_\out ,\wt
      R_1)$.
    \end{enumerate}
    We will thus at most require a geometric number of $t_\ell$'s to
    construct the $\wt R_1$. As in step \ref{step:tlgeom} in
    the proof of Theorem~\ref{thm:LLNuCLTmodel1} we obtain
    \begin{align*}
      \Pr( \wt R_1 \ge \wt n) \le
      \Pr(\text{more than $\log \wt n/\log c$ steps needed}) \le
      \delta^{ \log \wt n/\log c} = \wt n^{-a},
    \end{align*}
    where again $a$ can be chosen large when $p$ is close to $1$.
  \item Set
    \begin{align*}
      \widehat \eta_1 & \coloneqq (\eta_{-L_{\mathrm{t}}\wt R_1} (x+
                        L_{\mathrm{s}} \wt X_{\wt R_1}):
                        \norm{x} \le b_\out L_{\mathrm{s}}),\\
      \widehat Y_1 & \coloneqq \wt{Y}_{\wt R_1}, \text{the displacement of }
                     X_{L_{\mathrm{t}}\wt R_1} \text{ relative to the
                     centre} \\
                      & \qquad \text{of the $L_{\mathrm{s}}$-box in
                        which it is contained}.
    \end{align*}
    Now we shift the space-time origin to $(\wt X_{\wt R_1},\wt R_1)$
    (on coarse-grained level). Then we start afresh conditioned on
    seeing
    \begin{enumerate}[(i)]
    \item configuration $\wt \xi \equiv 1$ in the $b_\out $-box around
      $0$ (on the coarse-grained level);
    \item $\widehat\eta_1$ on the $b_\out L_{\mathrm{s}}$ box (on the
      `fine' level);
    \item Displacement of the walker on the fine level relative to the
      centre of the corresponding coarse-graining box given by
      $\widehat Y_1$.
    \end{enumerate}
  \item We iterate the above construction to obtain a sequence of random
    times $\wt R_i$, positions $\wt X_{\wt R_i}$, relative displacements
    $\widehat Y_i$ and local configurations $\widehat \eta_i$. By
    construction
    \begin{align*}
      \bigl( \wt X_{\wt
      R_i} - \wt X_{\wt R_{i-1}},  \wt R_i
      -\wt R_{i-1}, \widehat Y_i, \widehat\eta_i
      \bigr)_{i \in \N}
    \end{align*}
    is a Markov chain. Furthermore,
    $\bigl(\widehat Y_i, \widehat\eta_i \bigr)_{i \in \N}$ is itself a
    finite state space Markov chain and the increments
    $\bigl(\wt X_{\wt R_{i+1}} - \wt X_{\wt R_{i}}, \wt R_{i+1} -\wt
    R_{i}\bigr)$
    depend only on $\bigl(\widehat Y_i, \widehat\eta_i \bigr)$.
\end{enumerate}

Along the random times $L_{\mathrm{t}} \wt R_{n}$,
\begin{align*}
  X_{L_{\mathrm{t}} \wt R_{n}} = \widehat Y_n + \sum_{i=1}^n
  L_{\mathrm{s}}\big( \wt X_{\wt
    R_i} - \wt X_{\wt R_{i-1}} \big)
\end{align*}
is an additive functional of a well-behaved Markov chain
(with exponential mixing properties) and
\begin{align*}
  \E\big[(\wt R_{i+1} -\wt R_{i})^a \, \big| \, \widehat
  Y_i, \widehat\eta_i\big] < \infty,
\quad
\E\big[\norm{\wt X_{\wt R_{i+1}} - \wt
  X_{\wt R_{i}}}^a \, \big| \, \widehat Y_i,
  \widehat\eta_i\big] < \infty
\end{align*}
for some $a>2$ uniformly in $\widehat Y_i, \widehat\eta_i$
(cf.~Step~4). From this representation the (functional) central limit
theorem can be deduced; see e.g.\ Chapter~1 in
\cite{KomorowskiLandimOlla2012} or Theorem~2 in
\cite{Rassoul-AghaSeppalainen2008}.

 Note that the speed of the random
walk must be $0$ by the symmetry assumption; see
\eqref{eq:abstr-symm}.
\end{proof}

\section{Example: an ancestral lineage of logistic
branching random walks}
\label{sect:model.real}

In this section we consider a concrete stochastic model for a locally
regulated, spatially distributed population that was introduced and
studied in \cite{BD07} and we refer the reader to that paper for a
more detailed description, interpretation, context and properties. We
call this logistic branching random walk because the function $f$ in
\eqref{def:f}, which describes the dynamics of the local mean
offspring numbers, is a `spatial relative' of the classical logistic
function $x \mapsto x (1-x)$ which appears in many (deterministic)
models for population growth under limited resources.
See also Remark~\ref{rem:furthermodels} below for a discussion
of related models and possible extensions.

After defining the model we recall and slightly improve some relevant
results from \cite{BD07}. Then in Proposition~\ref{prop:flowcoupling}
we show that in a high-density regime (see Assumption~\ref{ass:m-lambda})
assumptions from Section~\ref{sect:abstr-setup} are fulfilled by the
logistic branching random walk and the corresponding ancestral random
walk.

\subsection{Ancestral lineages in a locally regulated model}
\label{sec:ancestr-line-locally}

Let $p=(p_{xy})_{x,y \in \Z^d} =(p_{y-x})_{x,y \in \Z^d}$ be a
symmetric aperiodic stochastic kernel with finite range $R_p \ge 1$.
Furthermore let $\lambda=(\lambda_{xy})_{x,y \in \Z^d}$ be a
non-negative symmetric kernel satisfying
$0 \le \lambda_{xy} =\lambda_{0,y-x}$ and having finite range
$R_\lambda$. We set $\lambda_0 \coloneqq \lambda_{00}$ and for a
configuration $\zeta \in \R_+^{\Z^d}$ and $x \in \Z^d$ we define
\begin{align}
  \label{def:f}
  f(x; \zeta) \coloneqq \zeta(x) \big( m - \lambda_0 \zeta(x) -
  \sum_{z \ne x} \lambda_{xz} \zeta(z)  \big)^+.
\end{align}
We consider a population process $\eta\coloneqq (\eta_n)_{n \in \Z}$
with values in $\Z_+^{\Z^d}$, where as in the previous sections
$\eta_n(x)$ is the number of individuals at time $n \in \Z$ at site $x
\in \Z^d$. Before giving a formal definition of $\eta$ let us describe
the dynamics informally: Given the configuration $\eta_n$ in
generation $n$, each individual at $x$ (if any at all present) has a
Poisson distributed number of offspring with mean
$f(x;\eta_n)/\eta_n(x)$, independent of everything else. Offspring
then take an independent random walk step according to the kernel $p$
from the location of their mother. Then the offspring of all
individuals together form the next generations configuration
$\eta_{n+1}$. For obvious reasons $p$ and $\lambda$ are referred to as
\emph{migration} and \emph{competition} kernels respectively. Note
that in the case $\lambda \equiv 0$ the process $\eta$ is literally a
branching random walk.

We now give a formal construction of $\eta$. Let
\begin{align}
  \label{eq:Nxy}
 U \coloneqq \{U^{(y,x)}_n: n \in \Z,\; x, y \in \Z^d, \norm{x-y} \leq R_p\}
\end{align}
be a collection of independent Poisson processes on $[0,\infty)$ with
intensity measures of $U^{(y,x)}_n$ given by $p_{yx}\, dt$. The
natural state space for each $U^{(y,x)}_n$ is
\begin{equation}
  \wt{\mathcal{D}}\coloneqq \big\{ \psi : [0,\infty) \to \Z_+ \colon \,
  \mbox{$\psi$ c\`adl\`ag, piece-wise constant, only jumps of size $1$}\big\},
\end{equation}
which is a Polish space as a closed subset of the (usual) Skorokhod
space $\mathcal{D}$. For given $\eta_n \in \Z_+^{\Z^d}$, define
$\eta_{n+1} \in \Z_+^{\Z^d}$ via
\begin{equation}
  \label{eq:flowdef}
  \eta_{n+1}(x) \coloneqq \sum_{y \, : \, \norm{x-y} \leq R_p}
  U^{(y,x)}_n\big(f(y;\eta_n)\big),  \quad x\in \Z^d.
\end{equation}
Note that for each $x$, the right-hand side of \eqref{eq:flowdef} is a
finite sum of (conditionally) Poisson random variables with finite
means bounded by $\norm{f}_\infty$. Thus, \eqref{eq:flowdef} is well
defined for any initial condition -- in this discrete time scenario,
no growth condition at infinity, etc.\ is necessary. Furthermore we
note that by well known properties of Poisson processes
$\eta_{n+1}$, given $\eta_n$, is a family of conditionally
independent random variables with
\begin{align}
  \label{eq:eta-rm}
  \eta_{n+1} (x)  \sim  \mathrm{Pois}\Bigl( \sum_{y \in \Z^d}
  p_{yx} f(y;\eta_{n}) \Bigr), \quad x \in \Z^d.
\end{align}

For $-\infty < m < n$ set
\begin{align}
  \label{eq:Gmn}
  \mathcal{G}_{m,n} \coloneqq \sigma(U_k^{(x,y)} : m \leq k < n, \, x,
  y \in \Z^d).
\end{align}
By iterating \eqref{eq:flowdef}, we can define a random family of
$\mathcal{G}_{m,n}$-measurable mappings
\begin{align}
  \label{eq:Phimn}
 \Phi_{m,n} : \Z_+^{\Z^d} \to \Z_+^{\Z^d}, \; -\infty < m<n \quad
  \text{such that} \quad \eta_n = \Phi_{m,n}(\eta_m).
\end{align}
To this end define $\Phi_{m,m+1}$ as in
\eqref{eq:flowdef} via
\begin{align}
  \label{eq:flowdef2}
  (\Phi_{m,m+1}(\zeta))(x)
  & \coloneqq \sum_{y \, : \, \norm{x-y} \leq R_p} \hspace{-0.8em}
    U^{(y,x)}_m\big(f(y;\zeta)\big) \quad \mbox{for $y \in \Z^d$ and
    $\zeta\in \Z_+^{\Z^d}$} \\ \intertext{and then put}
  \label{eq:flowdef2a}
  \Phi_{m,n} & \coloneqq \Phi_{n-1,n} \circ \cdots \circ \Phi_{m,m+1}.
\end{align}
Using these mappings we can define the dynamics of
$(\eta_n)_{n=m,m+1,\dots}$ \emph{simultaneously} for all initial
conditions $\eta_m \in \Z_+^{\Z^d}$ for any $m \in \Z$.

Let us for a moment consider the process
$\eta=(\eta_n)_{n=0,1,\dots}$. Obviously, the configuration
$\underline 0 \in \Z_+^{\Z^d}$ is an absorbing state for $\eta$.
Thus, the Dirac measure in this configuration is a trivial invariant
distribution of $\eta$. In \cite{BD07} it is shown that for certain
parameter regions, in particular $m \in (1,4)$ and suitable $\lambda$,
the population survives with positive probability. For $m \in (1,3)$
(and again suitable $\lambda$) the existence and uniqueness of
non-trivial invariant distribution is proven. We recall the relevant
results for $m \in (1,3)$.

\begin{proposition}[Survival and complete convergence, \cite{BD07}]
  Assume \label{prop:surv} $m \in (1,3)$ and let $p$ and $\lambda$ be
  as above.
  \begin{enumerate}[(i)]
  \item There are $\lambda_0^{*}=\lambda_0^{*}(m,p) > 0$ and
    $a^{*}=a^{*}(m,p)> 0$ such that if $\lambda_0 \le \lambda_0^{*}$
    and $\sum_{x\neq 0} \lambda_{0x}\le a^{*}\lambda_0$ then the
    process $(\eta_n)_{n=0,1,\dots}$ survives with positive
    probability (if survival for one step has positive probability)
    and has a unique non-trivial invariant extremal distribution $\bar
    \nu$.
  \item Conditioned on non-extinction, $\eta_n$ converges in
    distribution in the vague topology to $\bar\nu$.
  \end{enumerate}
\end{proposition}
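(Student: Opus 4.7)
My plan is a comparison with supercritical Bernoulli oriented percolation on a coarse-grained space-time grid, implemented by a block construction in the spirit of Bezuidenhout--Grimmett and anticipating Assumption~\ref{ass:coupling} of this paper. The analytic engine is that the mean-field map $\zeta\mapsto\zeta(m-\lambda_0\zeta)$ has derivative $2-m\in(-1,1)$ at its non-trivial fixed point $\zeta^{*}=(m-1)/\lambda_0$ whenever $m\in(1,3)$, and is therefore contractive there. The smallness hypotheses $\lambda_0\le\lambda_0^{*}$ and $\sum_{x\ne 0}\lambda_{0x}\le a^{*}\lambda_0$ serve to make $\zeta^{*}$ large, so that the Poissonian fluctuations inherent to \eqref{eq:eta-rm} are of lower order than the typical density, and to guarantee that the genuinely spatial interaction does not spoil the local contraction.

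For (i) I would fix block scales $L_s\ll L_t$ and call a configuration on a spatial $L_s$-box \emph{good} if its empirical density there is close to $\zeta^{*}$. Combining \eqref{eq:eta-rm} with standard Poisson concentration inequalities and iterating the mean-field contraction through $L_t$ generations, one shows that on a high-probability event depending only on the driving noise $U$ in a fattened space-time block (this is precisely the set $G_U$ of Assumption~\ref{ass:coupling}): (a) any good configuration at the bottom of the block generates good configurations on all neighbouring blocks at the top, and (b) two good starting configurations driven by the same $U$ are mapped to the \emph{same} configuration at the top. A Liggett--Schonmann--Stacey stochastic domination then couples the indicator field of `good' blocks from below by a supercritical Bernoulli site field on the coarse-grained lattice. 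Any initial configuration that survives one generation with positive probability generates a good block with positive probability, and the supercritical percolation cluster issuing from this block yields positive survival probability. For uniqueness of the non-trivial extremal invariant law, I would run the flow $\Phi_{m,n}$ of \eqref{eq:Phimn} for $m\to-\infty$ from two arbitrary nontrivial starting configurations: property (b) forces the two processes to agree on the infinite coarse-grained cluster that percolates down to $-\infty$, and a diagonal/compactness extraction together with uniqueness of the infinite cluster identifies a single extremal equilibrium $\bar\nu$.

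Assertion (ii) is then the standard complete-convergence deduction: on the non-extinction event, the population almost surely seeds a good block from some random time on, and this block is, with probability one, connected backwards in the supercritical coarse-grained percolation to the infinite cluster carrying $\bar\nu$; property (b) then forces the local configuration of $\eta_n$ near any fixed window to agree eventually with the stationary construction, giving vague convergence. The main obstacle I expect is the quantitative block estimate underlying (a) and (b): one must propagate a Gaussian-sized deviation around the density $\zeta^{*}$ through $L_t$ generations of the stochastic dynamics and show that it does not grow, uniformly in good initial configurations and in a way detectable from the local driving noise alone. The scaling $\zeta^{*}\propto1/\lambda_0$ keeps the relative fluctuations of \eqref{eq:eta-rm} small and $m<3$ provides the contractivity; the delicate point is to combine the random migration through $p$ with the logistic competition so that the `good' event has probability arbitrarily close to one uniformly in $L_t$, which in turn forces the Bernoulli parameter produced by Liggett--Schonmann--Stacey into the supercritical regime.
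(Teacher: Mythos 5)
Your sketch is essentially the argument used in the cited reference \cite{BD07}, which the present paper invokes rather than reproves: the contraction of the logistic map (derivative $2-m\in(-1,1)$ at the fixed point), the use of small $\gamma=\sum_x\lambda_{0x}$ to make $\zeta^*$ large and hence relative Poisson fluctuations small, a block construction with ``good'' density configurations and ``good'' driving noise, Liggett--Schonmann--Stacey domination by supercritical oriented percolation, and uniqueness and complete convergence via the flow $\Phi_{m,n}$ and the structure of the percolation cluster. These are exactly the ingredients that the paper later abstracts in Assumption~\ref{ass:coupling} and Lemma~\ref{lem:abstr-OCcoupl} and re-verifies for this model in Proposition~\ref{prop:flowcoupling}, so your proposal and the paper's (cited) proof take the same route.
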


Since we are only interested in the regime when the corresponding
deterministic system, cf.~\eqref{eq:coupl-map} below, is well
controlled and in particular, Proposition~\ref{prop:surv} guarantees
that a non-trivial invariant extremal distribution $\bar \nu$ exists,
we make the following general assumption.
\begin{assumption}
  \label{ass:m-lambda}
  \begin{enumerate}
  \item
  With the notation from
  Proposition~\ref{prop:surv} we assume $m \in (1,3)$ and $\sum_{x\neq
    0} \lambda_{0x}\le a^{*}\lambda_0$.
  \item $\gamma \coloneqq \sum_x \lambda_{0x}$ is sufficiently small.
  \end{enumerate}
\end{assumption}

Note that $a^*$ is determined by the dimension $d$, the parameters
$m$, $p$ and a renormalised $\wt\lambda$ by the requirement that the
left-hand side of \eqref{eq:fgradsmall} at $\zeta \equiv m^*$ must be
strictly smaller than $1$, see Section~\ref{sec:determdyn} below.

Under this assumption we can (and do so from now on) consider the
stationary process $\eta=(\eta_n)_{n \in \Z}$ with $\eta_n$
distributed according to $\bar\nu$. From the informal description
(after \eqref{def:f}) above and the formal definition
\eqref{eq:flowdef} it is clear that the model can be easily enriched
with genealogical information; see e.g.\ Chapter~4 in \cite{Dep08}.
Put
\begin{equation}
  \label{eq:anclinquedyn}
  p_{ \eta}(k; x,y) \coloneqq
  \frac{p_{yx}f(y;\eta_{-k-1})}{\sum_z
    p_{zx}f(z;\eta_{-k-1})}, \quad x,y \in
  \Z^d, \, k \in \Z_+
\end{equation}
with some arbitrary convention if the denominator is $0$. For a given
$\eta$, conditioned on $\eta_0(0)>0$, let
$X \coloneqq (X_k)_{k=0,1,2,\dots}$ be a time-inhomogeneous Markov
chain with
\begin{align}
  \label{eq:defX-real}
  X_0 = 0, \quad \text{and} \quad \Pr(X_{k+1}=y \, | \, X_k=x,
   \eta) = p_{ \eta}(k; x,y).
\end{align}
This is the dynamics of the space-time embedding of the ancestral
lineage of an individual sampled at random from the (space-time)
origin at stationarity, conditioned on the (full) space-time
configuration $\eta$. Note that given $\eta$, we see from
\eqref{eq:flowdef} that the number of offspring coming from $y$ in
generation $-k-1$ that moved to $x$ is given by
$U^{(y,x)}_{-k}\big(f(y;\eta_{-k-1})\big)$ which is
$\text{Pois}\big(p_{yx}f(y;\eta_{-k-1}) \big)$-distributed conditional on
the sum over all $y$ in the neighbourhood of $x$ being equal to
$\eta_{-k}(x)$. Since a vector of independent Poisson random variables
conditioned on its total sum has a multinomial distribution we see
that the dynamics of the ancestral lineage are indeed given by
\eqref{eq:defX-real}.

\smallskip

Our main result in this section is the following theorem.

\begin{theorem}[LLN and averaged CLT]
  Assume \label{thm:lln-clt-rm} $d\ge 1$,
  let the Assumption~\ref{ass:m-lambda} be satisfied
  and let $\eta=(\eta_n)_{n \in \Z}$ be the stationary process
  conditioned on $\eta_0(0)>0$. For the
  random walk $(X_k)_{k=0,1,\dots}$ defined in \eqref{eq:defX-real} we
  have
\begin{align}
    \label{eq:LLN}
    P_\eta\Big( \frac1k X_k \to 0 \,\Big) = 1
    \quad \text{for} \quad \text{$\Pr \bigl(\,\cdot\, | \,
  \eta_0(0)>0 \bigr)$-a.a.\ $\eta$},
  \end{align}
  and for any $g \in C_b(\R^d)$
  \begin{align}
    \label{eq:annealedCLT}
    \E \Big[ g\left( X_k/\sqrt{k}\, \right) \, \Big| \, \eta_0(0)>0 \, \Big]
    \xrightarrow{n \to \infty}   \Phi(g),
  \end{align}
  where $\Phi$ is a non-trivial $d$-dimensional normal law and
  $\Phi(g) \coloneqq \int g(x)\, \Phi(dx)$.
\end{theorem}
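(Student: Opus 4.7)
The plan is to verify that the stationary logistic branching random walk $\eta$ and the ancestral walk $X$ of \eqref{eq:defX-real} satisfy Assumptions~\ref{ass:markovdyn}--\ref{ass:cl-srw} together with the point-reflection symmetry \eqref{eq:abstr-symm}, and then invoke Theorem~\ref{thm:abstr-regen}. Assumption~\ref{ass:markovdyn} is essentially built into the model: the Poisson field $U = \{U^{(y,x)}_n\}$ of \eqref{eq:Nxy} is i.i.d.\ in $n$ with values in the Polish space $\wt{\mathcal D}$, and the update \eqref{eq:flowdef} defines $\eta_{n+1}(x)$ as an explicit function of $\eta_n$ restricted to a ball of radius $R_\eta \coloneqq R_p + R_\lambda$ and of $U(\,\cdot\,,n)$ restricted to the same ball. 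The walk \eqref{eq:defX-real} fits \eqref{eq:abstr-walk} by introducing an independent $(0,1)$-uniform field $\widehat U$ used to sample from the kernel $p_\eta(k;x,\,\cdot\,)$ in \eqref{eq:anclinquedyn}.

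The main obstacle is Assumption~\ref{ass:coupling}. The key idea is to exploit contractivity of the deterministic coupled map lattice $\wt\Phi(\zeta)(x) \coloneqq \sum_y p_{yx} f(y;\zeta)$, which under Assumption~\ref{ass:m-lambda} has a homogeneous attracting fixed point $m^* \in (0,\infty)$ of order $1/\gamma$. Concretely, I would set
\begin{align*}
G_\eta \coloneqq \bigl\{ \zeta \in \Z_+^{B_{2L_{\mathrm{s}}}(0)} :
\,\abs{\zeta(x) - m^*} \le \varepsilon_\eta m^* \text{ for all } x \in B_{2L_{\mathrm{s}}}(0) \bigr\}
\end{align*}
with $\varepsilon_\eta$ small, take $G_U$ to consist of those noise configurations on $\mathsf{block}_4(0,0)$ for which every Poisson count entering the iteration of \eqref{eq:flowdef} from \emph{any} $G_\eta$-start deviates from its conditional mean by at most a tiny relative amount, and take $\eta^{\mathrm{ref}} \equiv \lceil m^* \rceil$. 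Standard Poisson concentration (the relative fluctuation scale is $1/\sqrt{m^*} = O(\sqrt\gamma)$) gives \eqref{eq:goodblockprob} with $\varepsilon_U$ as small as desired once $\gamma$ is small and $L_{\mathrm{s}}, L_{\mathrm{t}}$ are large. On $G_U$ the stochastic flow is a small perturbation of $\wt\Phi$; since $\wt\Phi$ is a uniform contraction on a neighbourhood of $m^*$ (the condition captured by $a^*$ of Proposition~\ref{prop:surv}), iterating $L_{\mathrm{t}}$ steps beats the accumulated noise and forces any two $G_\eta$-starts to coincide on $B_{3L_{\mathrm{s}}}(L_{\mathrm{s}}\wt x)$, giving \eqref{eq:contraction}; the purely deterministic \eqref{eq:propagation.coupling} is automatic from locality of \eqref{eq:flowdef} and the margin $L_{\mathrm{s}} \gg R_\eta$. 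The delicate point is the quantitative balance: $\gamma$ must be chosen small, then $L_{\mathrm{s}}$ large (to make Poisson fluctuations negligible on the density scale $m^* \sim 1/\gamma$), then $L_{\mathrm{t}}$ large (so the contraction wins), in that order.

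Assumption~\ref{ass:irred} is straightforward: starting from any $\chi \in G_\eta$, each one-step update in \eqref{eq:flowdef} assigns strictly positive probability to every Poisson tuple; since $G_\eta$ is finite, iterating over the $L_{\mathrm{t}}$ steps inside a block yields a uniform positive lower bound on the probability of any specific $\chi' \in G_\eta$ at the top of the block. For Assumption~\ref{ass:cl-srw}, whenever $\wt G(\wt x,\wt n)=1$ the field $\eta$ is close to $m^*$ throughout $\mathsf{block}(\wt x, \wt n)$; since $f(y;\eta)$ is then approximately spatially constant on the support of $p(x,\,\cdot\,)$, \eqref{eq:anclinquedyn} is uniformly close in total variation to the symmetric finite-range kernel $p_{yx}$, and an Azuma--Hoeffding estimate for a walk close to this reference kernel delivers the confinement \eqref{eq:abstr-sb} provided $L_{\mathrm{s}} \gg \sqrt{L_{\mathrm{t}}}$ (which is compatible with $L_{\mathrm{s}} \ll L_{\mathrm{t}}$). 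The symmetry \eqref{eq:abstr-symm} is immediate from the symmetry of $p$ and $\lambda$, which makes $f(\,\cdot\,;\,\cdot\,)$ and hence $p_\eta(k;x,y)$ equivariant under spatial point reflection.

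With these four verifications Theorem~\ref{thm:abstr-regen} applies and yields the strong LLN with zero speed and the functional averaged CLT with non-trivial covariance matrix under the law $\Pr$ with $\eta_0 \sim \bar\nu$. The conditioning on $\{\eta_0(0) > 0\}$ has strictly positive $\bar\nu$-probability since $\bar\nu$ is non-trivial; moreover, this event depends only on $\eta_0$ at the origin, so the regeneration construction from Section~\ref{subsect:regen} absorbs it into the bounded-moment initial segment $[0,\wt R_1]$ (which has tails controlled by \eqref{eq:reg-tales}), and the ergodic-type arguments at the end of the sketch of Theorem~\ref{thm:abstr-regen} still apply to the conditioned law. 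This gives \eqref{eq:LLN} and \eqref{eq:annealedCLT}.
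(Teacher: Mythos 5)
Your plan follows the paper's route exactly: the paper proves this theorem in one line by combining Proposition~\ref{prop:flowcoupling} (which verifies the assumptions of Sections~\ref{subsect:abstr-assumpt} and \ref{subsect:abstr-assumpt-walk} for logistic BRW via the contraction of the deterministic system near $m^*$, Poisson concentration on scales $L_{\mathrm{s}},L_{\mathrm{t}}\sim\log(1/\gamma)$, and the flow coupling) with Theorem~\ref{thm:abstr-regen}, and your outline reproduces each of these ingredients, including the treatment of the conditioning on $\{\eta_0(0)>0\}$. The one step you under-specify is how ``closeness'' of two $G_\eta$-starts becomes the exact coincidence required in \eqref{eq:contraction}: a Poisson-concentration definition of $G_U$ alone yields only approximate agreement, whereas the paper exploits the monotone flow representation to show that the integer-valued range $\sup_{i}\eta^{(i)}_n(x)-\inf_{i}\eta^{(i)}_n(x)$ has expectation contracting geometrically (via \eqref{eq:fgradsmall}) to below $1$ over the block, whence it is exactly $0$ with high probability by Markov's inequality.
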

\begin{proof}
  The assertions of the theorem follow from a combination of
  Proposition~\ref{prop:flowcoupling} and Theorem~\ref{thm:abstr-regen}.
\end{proof}

\subsection{Deterministic dynamics}
\label{sec:determdyn}

For comparison, we consider the dynamical system (such systems are
also called a coupled map lattices)
$\zeta \coloneqq (\zeta_n)_{n =0,1,\dots}$ on $[0,\infty)^{\Z^d}$
defined by
\begin{align}
  \label{eq:coupl-map}
  \zeta_n(x) \coloneqq \sum_{y \in \Z^d}
  p_{yx}f(y;\zeta_{n-1}), \quad x \in \Z^d, \: n \in \N
\end{align}
with $f$ from \eqref{def:f} and arbitrary initial condition
$\zeta_0 \in [0,\infty)^{\Z^d}$ (cf.~\cite[Eq.~(5)]{BD07}). It is
easily seen from \eqref{def:f} that with
\begin{align}
  \label{eq:mstar}
  m^* = m^*(\lambda) = \frac{m-1}{\sum_z \lambda_{0,z}},
\end{align}
$\zeta^*(\cdot) \equiv m^*$ is an equilibrium of the dynamical system
$\zeta$. Furthermore, setting
\begin{align*}
  \gamma \coloneqq \sum_z \lambda_{0,z}, \quad \wt\lambda_{xy} \coloneqq
  \lambda_{xy}/\gamma \quad \text{and} \quad \wt{\zeta}_n(x) \coloneqq
  \gamma \zeta_n(x)
\end{align*}
we see from \eqref{def:f} that $(\wt{\zeta}_n)_{n=0,1,\dots}$ solves
\begin{align}
  \label{def:ftilde}
  \wt\zeta_n(x) & = \sum_{y \in \Z^d } p_{yx} \wt\zeta_{n-1}(y) \big( m -
                  \wt\lambda_0 \wt\zeta_{n-1}(y) - \sum_{z \ne
                  x} \wt\lambda_{yz} \wt\zeta_{n-1}(z)  \big)^+,
\end{align}
i.e., \eqref{eq:coupl-map} with $\lambda$ in the function $f$ replaced
by $\wt\lambda$. Thus, we can and shall assume $\gamma = 1$ for the
rest of this subsection.

\begin{lemma}
  There \label{lem:deterdyn} exist $\alpha_0<\alpha <m^*< \beta$,
  $\varepsilon=\varepsilon(m, \lambda)>0$, $R_0$, $k_0$, $N_0$ and
  $s_0$ such that for all $R \ge R_0$ the following assertions hold:
  \begin{enumerate}[(i)]
  \item If
    $\zeta_0(y) \in [\alpha, \beta] \text{ for all } y \in B_{R}(x)$
    then
    \begin{align}
      \label{eq:0to1}
      \zeta_n(y) & \in \big[(1+\varepsilon)\alpha,
                   \beta/(1+\varepsilon)\big] \text{ for all } n \ge N_0, \:
                   \norm{y-x} \le R + s_0(n-N_0), \\ \intertext{and}
      \label{eq:0to11}
      \zeta_n(y) & \ge \alpha_0 \text{ for all } n \ge 1, \: \norm{y-x}
                   \le R-k_0 + s_0n.
    \end{align}
  \item For $(\zeta(y))_{y \in B_{R_\lambda}(x)} \in
    [\alpha,\beta]^{B_{R_\lambda}(x)}$ we have
    \begin{align}
      \label{eq:fgradsmall}
      \sum_{y \in B_{R_\lambda}(x)} \left| \frac{\partial}{\partial
          \zeta(y)} f(x; \zeta) \right| < 1-\frac{\varepsilon}2.
    \end{align}
  \item One can choose $\beta-\alpha>0$ arbitrarily close to $0$.
  \end{enumerate}

\end{lemma}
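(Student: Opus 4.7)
Plan: My approach is to linearize $f$ at the spatially constant fixed point $\zeta^*\equiv m^*$ and to exploit a strict contraction there. After the normalization $\gamma=1$ (so that $m^*=m-1$), one has $h(y;\zeta^*):=m-\sum_z\lambda_{yz}m^*=1>0$, so the positive part in the definition \eqref{def:f} of $f$ is inactive in a neighborhood of $\zeta^*$ and $f$ is smooth there. A direct calculation yields $\partial_{\zeta(y)}f(y;\zeta^*)=1-\lambda_0 m^*$ and $\partial_{\zeta(z)}f(y;\zeta^*)=-\lambda_{yz}m^*$ for $z\neq y$, hence
\[
\sum_{z\in B_{R_\lambda}(y)}\bigl|\partial_{\zeta(z)}f(y;\zeta^*)\bigr|
=\abs{1-\lambda_0 m^*}+(1-\lambda_0)m^*.
\]
Splitting into the two cases $\lambda_0 m^*\le 1$ and $\lambda_0 m^*>1$ gives the values $1+m^*(1-2\lambda_0)$ and $m^*-1=m-2$ respectively; both are strictly less than $1$ under Assumption~\ref{ass:m-lambda}, the second case by $m<3$ and the first by $\lambda_0>1/2$, which follows from $\lambda_0\ge 1/(1+a^*)$ once $a^*$ is chosen small. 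By continuity of the derivatives in $\zeta$, the same sum remains below $1-\varepsilon/2$ on a full neighborhood $[\alpha,\beta]$ of $m^*$, which yields (ii); moreover this neighborhood can be shrunk to arbitrary diameter, which yields (iii).

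I would then deduce (i) from (ii) by straightforward iteration. The mean value theorem implies that whenever $\zeta(z)\in[\alpha_n,\beta_n]\subset[\alpha,\beta]$ for every $z\in B_{R_\lambda}(y)$, one has $f(y;\zeta)\in\bigl[m^*-(1-\tfrac{\varepsilon}{2})(m^*-\alpha_n),\,m^*+(1-\tfrac{\varepsilon}{2})(\beta_n-m^*)\bigr]$. Since $\zeta_{n+1}(y)=\sum_{y'}p_{y'y}f(y';\zeta_n)$ is an average of such $f$-values over $y'\in B_{R_p}(y)$, the same enclosure is inherited by $\zeta_{n+1}(y)$ provided the $(R_p+R_\lambda)$-neighborhood of $y$ lies in the currently controlled ball. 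Iterating, the ball of control shrinks by $R_p+R_\lambda$ per time step while the interval contracts geometrically toward $\{m^*\}$ with rate $1-\varepsilon/2$. Choosing $N_0$ large enough that $(1-\varepsilon/2)^{N_0}\max(m^*-\alpha,\beta-m^*)$ fits the contracted interval inside $[(1+\varepsilon)\alpha,\beta/(1+\varepsilon)]$, and setting $s_0:=-(R_p+R_\lambda)$, $R_0:=N_0(R_p+R_\lambda)$, yields \eqref{eq:0to1}. The weaker lower bound \eqref{eq:0to11} follows from a cruder estimate: one application of the dynamics gives $\zeta_1(y)\ge\alpha(m-\beta)p_{\min}$ for $y$ whose neighbors all lie in $B_R(x)$, and an initial loss $k_0\ge R_p+R_\lambda$ together with the same contraction argument then maintains a positive lower bound $\alpha_0$ throughout the shrinking region.

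The main obstacle is the linearization computation itself: one must verify that \emph{both} cases $\lambda_0 m^*\lessgtr 1$ produce a sum of partial derivatives strictly below $1$. This pins down precisely how small $a^*$ must be chosen relative to $m$, and is exactly what Assumption~\ref{ass:m-lambda} is designed to provide. Once (ii) is in hand, the contraction-propagation argument for (i) is routine book-keeping, inheriting finite propagation speed from the finite ranges of $p$ and $\lambda$.
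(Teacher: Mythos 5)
Your linearisation computation for part (ii) is correct and, with the bound $\tilde\lambda_0\ge 1/(1+a^*)>1/2$ after renormalisation, does establish \eqref{eq:fgradsmall}; note that the paper itself simply cites Lemma~13 and Eq.~(40) of \cite{BD07} here, so your explicit calculation is a useful unpacking. (One small inaccuracy: the mean value theorem gives $\abs{f(y;\zeta)-m^*}\le(1-\tfrac\e2)\max(m^*-\alpha_n,\beta_n-m^*)$, i.e.\ the symmetric hull, not the one-sided interval you wrote; this does not affect the argument once you track the symmetric radius.)

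However, there is a genuine gap in your proof of (i), and it is exactly where you dismiss the work as ``routine book-keeping''. You set $s_0\coloneqq -(R_p+R_\lambda)$, i.e.\ your controlled region \emph{shrinks} by $R_p+R_\lambda$ per time step. But the lemma asserts, and its use in Step~1 of the proof of Proposition~\ref{prop:flowcoupling} (see \eqref{eq:goodgrow}, where the good region passes from $B_{2L_{\mathrm{s}}}$ to $B_{5L_{\mathrm{s}}}$ over $L'_{\mathrm{t}}$ time steps) requires, that $s_0>0$: the region on which $\zeta_n$ is trapped near $m^*$ must \emph{grow linearly}. Your contraction argument only controls the interior of a shrinking ball and gives no information about sites just outside the initial ball $B_R(x)$. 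The positive propagation speed is a separate phenomenon, driven by the fact that $0$ is a \emph{repelling} fixed point of the one-dimensional map $u\mapsto u(m-\lambda_0 u)^+$ (since $m>1$) while $m^*$ is attracting: a site $y$ on the rim that has only some good neighbours receives a strictly positive contribution from those neighbours, enters the bootstrapping regime above some $\alpha_0>0$, and then is driven up into $[\alpha,\beta]$ after finitely many steps. You gesture at this via ``finite propagation speed from the finite ranges of $p$ and $\lambda$'', but that only gives an \emph{upper} bound on how fast anything can change; it does not give the required \emph{lower} bound on the speed at which goodness spreads outward. This is precisely the content of Lemmas~11 and~12 in \cite{BD07} that the paper cites for part~(i), and it cannot be recovered from the contraction estimate in part~(ii) alone.
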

\begin{proof}
  Assertions (i) and (iii) follow from Lemma~11 and 12 (and arguments in their
  proofs) in \cite{BD07}. For (ii) see the proof of Lemma~13 and in
  particular Eq.~(40) in \cite{BD07}.
\end{proof}

\begin{remark}[Interpretation of Lemma~\ref{lem:deterdyn}]
  Assertion (i) in the above lemma means that if $\zeta_0$ in the
  neighbourhood of $x$ is in the interval $[\alpha,\beta]$ around
  $m^*$ then the regions around $x$ where $\zeta_n$ is bounded away
  from $0$ and where it is close to $m^*$ grow at positive speed
  (after a finite number of steps). Assertion (ii) means that the
  equilibrium $\zeta^*(\cdot) \equiv m^*$ is attracting.
\end{remark}

\subsection{Coupling reloaded}

\begin{remark}[Initial/boundary conditions on certain space-time regions]
  Note that for any $n \in \N$, $\Phi_{0,n}$ as defined in
  (\ref{eq:flowdef2}) can be viewed as a function of
  $(U^{(x,y)}_m : 0 \leq m < n, \, x,y \in \Z^d)$.

  Let $L \in \N$, $R_p$ the range of $p$, put
  \begin{equation}
    \mathsf{cone}(L,R_p) \coloneqq \big\{ (x,n) \in \Z^d \times \Z_+
    \, \colon \, \norm{x} \leq L + R_p n \big\}
  \end{equation}
  (recalling \eqref{def:cone}, we have
  $\mathsf{cone}(L,R_p) = \cup_{h>0} \mathsf{cone}(L,R_p,h)$). For
  given values of $\eta_k(x)$,
  $(x,k) \in \big((\Z^d \times \Z_+) \setminus \mathsf{cone}(L,R_p)
  \big) \cup ([-L,L]^d \times \{0\})$
  (we can view the latter set as a `space-time boundary' of
  $\mathsf{cone}(L,R_p)$), we can define $\eta_n$ consistently inside
  $\mathsf{cone}(L,R_p)$ through \eqref{eq:flowdef}.

  In fact, we can think of constructing the space-time field $\eta$ in
  a two-step procedure: First, generate the values outside
  $\mathsf{cone}(L,R_p)$ (in any way consistent with the model), then,
  conditionally on their outcome, use \eqref{eq:flowdef} inside.
\end{remark}

\begin{proposition}
  \label{prop:flowcoupling}
  Let Assumption~\ref{ass:m-lambda}~1.\ be fulfilled.
  For any $\varepsilon>0$ we can find $\gamma^*$ and such that
  if  $\gamma \coloneqq \sum_x
  \lambda_{0x} \le \gamma^*$ there exists a spatial scale
  $L_{\mathrm{s}}$ and a temporal scale $L_{\mathrm{t}}$, a set of
  good configurations $G_\eta$ and a set of good Poisson process
  realisations $G_U
  \subset \wt{\mathcal{D}}^{B_{4 L_{\mathrm{s}}}(0) \times
    \{1,2,\dots,L_{\mathrm{t}}\}}$ with $\Pr\big(
  \restr{U}{\mathsf{block}_4(0,0)} \in G_U \big) \ge 1-\varepsilon$
  such that the contraction and coupling conditions
  \eqref{eq:contraction}, \eqref{eq:propagation.coupling} from
  Section~\ref{sect:abstr-setup} are fulfilled. Furthermore the random
  walk defined in \eqref{eq:defX-real} satisfies \eqref{eq:abstr-sb}
  in Section~\ref{sect:abstr-setup}.
\end{proposition}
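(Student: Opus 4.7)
The strategy is to combine the contracting and attracting deterministic dynamics encoded in Lemma~\ref{lem:deterdyn} with Poisson concentration bounds that become arbitrarily sharp as $\gamma \to 0$, which is the regime of large local density $m^{*} = (m-1)/\gamma \gg 1$. After the rescaling $\wt\eta_n \coloneqq \gamma\eta_n$ (so that the equilibrium is at $m-1$ of order one while the underlying population per site is of order $1/\gamma$ and relative fluctuations are $O(\sqrt{\gamma})$), let $[\alpha,\beta]$, $\varepsilon$, $N_0$, $s_0$ be as in Lemma~\ref{lem:deterdyn}, and fix scales $L_{\mathrm{s}}, L_{\mathrm{t}}$ sufficiently large such that both the spatial containment $L_{\mathrm{s}} \ge L_{\mathrm{t}} R_p$ (needed for \eqref{eq:propagation.coupling}) and the deterministic-spread bound $s_0 (L_{\mathrm{t}} - N_0) \ge L_{\mathrm{s}}$ (needed for \eqref{eq:contraction}) are met. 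Define
\[
G_\eta \coloneqq \bigl\{ \chi \in \Z_+^{B_{2 L_{\mathrm{s}}}(0)} : \gamma \chi(x) \in [\alpha,\beta] \text{ for all } x \in B_{2 L_{\mathrm{s}}}(0) \bigr\},
\]
and let $G_U$ be the joint event that (i) the Poisson counts $U^{(y,x)}_n(\cdot)$ for $(x,n) \in \mathsf{block}_4(0,0)$ satisfy uniform Chernoff-type deviations $O(\sqrt{\gamma})$ around their means, and (ii) for every pair $(\chi,\chi') \in G_\eta \times G_\eta$ no Poisson jump of any $U^{(y,x)}_n$ falls in the (thin) disagreement interval between $f(y;\eta_n)$ and $f(y;\eta'_n)$ produced by the coupled flow \eqref{eq:flowdef2} starting from $(\chi,\chi')$. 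Since $f$ is uniformly bounded and $G_\eta$ is finite, Poisson concentration together with a union bound over sites and starting pairs gives $\Pr(G_U) \ge 1-\varepsilon$ for $\gamma \le \gamma^{*}(\varepsilon)$.

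\textbf{Propagation and contraction.} Condition \eqref{eq:propagation.coupling} is immediate from the range-$R_p$ locality of \eqref{eq:flowdef2}: if $\eta_{\wt n L_{\mathrm{t}}} = \eta'_{\wt n L_{\mathrm{t}}}$ on $B_{2 L_{\mathrm{s}}}(L_{\mathrm{s}} \wt x)$ and both are driven by the same $U$, then $\eta_{\wt n L_{\mathrm{t}}+k} = \eta'_{\wt n L_{\mathrm{t}}+k}$ on $B_{2 L_{\mathrm{s}} - k R_p}(L_{\mathrm{s}} \wt x)$, which contains $B_{L_{\mathrm{s}}}(L_{\mathrm{s}} \wt x)$ throughout $k \le L_{\mathrm{t}}$ by the scale choice. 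For \eqref{eq:contraction}, on $G_U$ I would combine (a) a shadowing estimate, proved inductively from Poisson concentration combined with the Lipschitz bound \eqref{eq:fgradsmall}, showing that each stochastic trajectory $\eta_n$ (resp.\ $\eta'_n$) stays within $O(m^{*} \sqrt{\gamma})$ of the deterministic trajectory $\zeta_n$ (resp.\ $\zeta'_n$) of \eqref{eq:coupl-map} uniformly on the block; (b) the deterministic attraction of Lemma~\ref{lem:deterdyn}(i), which places both $\zeta_{L_{\mathrm{t}}}$ and $\zeta'_{L_{\mathrm{t}}}$ in the inner window $[(1+\varepsilon)\alpha/\gamma,\beta/((1+\varepsilon)\gamma)]$ on $B_{3 L_{\mathrm{s}}}(L_{\mathrm{s}} \wt x)$ and on $B_{2 L_{\mathrm{s}}}(L_{\mathrm{s}}(\wt x + \wt e))$ for $\norm{\wt e}\le 1$, so that together with (a) this gives $\restr{\eta_{(\wt n+1) L_{\mathrm{t}}}}{B_{2 L_{\mathrm{s}}}(L_{\mathrm{s}}(\wt x + \wt e))} \in G_\eta$; and (c) exact merging: two trajectories driven by the same $U$ can disagree at $(x,n+1)$ only if some $U^{(y,x)}_n$ has a Poisson jump in $[f(y;\eta_n) \wedge f(y;\eta'_n), f(y;\eta_n) \vee f(y;\eta'_n)]$, and by \eqref{eq:fgradsmall} the widths of these intervals shrink geometrically in $n$ once (a)--(b) apply, so the avoidance of any such jump over the whole block has already been incorporated into $G_U$.

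\textbf{Random walk condition and main obstacle.} Condition \eqref{eq:abstr-sb} follows from (a) applied to the transition \eqref{eq:anclinquedyn}: when $\gamma\eta_{-k}(y) \in [\alpha,\beta]$ on a neighbourhood of $X_k$, all relevant values $f(y;\eta_{-k-1})$ are uniformly close (on relative scale $\sqrt{\gamma}$) to $m^{*}(m-(m-1)) = m^{*}$, so $p_\eta(k; X_k, \cdot)$ is within $O(\sqrt{\gamma})$ in total variation of the symmetric finite-range kernel $p$; hence $X$ on a good block is a small perturbation of a symmetric random walk with step $\le R_p$, whose maximal displacement over $L_{\mathrm{t}}$ steps concentrates on the scale $\sqrt{L_{\mathrm{t}}}\,R_p$, which is $\le L_{\mathrm{s}}/4$ by the choice of scales. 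The main obstacle is step (c): since the logistic branching random walk is \emph{not monotone}, one cannot enforce exact equality by sandwiching against a reference process as in monotone settings, and one must instead exploit the Poisson nature of the driving noise together with the contraction \eqref{eq:fgradsmall} to suppress all disagreement-inducing jumps over the entire block. Simultaneously matching $L_{\mathrm{s}} \ge L_{\mathrm{t}} R_p$, $s_0(L_{\mathrm{t}} - N_0) \ge L_{\mathrm{s}}$, and making the probability of any such jump (summed over sites and over pairs in $G_\eta \times G_\eta$) smaller than $\varepsilon$ is the technical heart of the argument; it is the place where smallness of $\gamma$ is essential.
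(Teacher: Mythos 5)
Your high-level plan --- Poisson concentration to shadow the deterministic dynamics, Lemma~\ref{lem:deterdyn} for propagation and attraction of the good region, contraction via \eqref{eq:fgradsmall} for merging, and total-variation closeness of $p_\eta$ to $p$ for the walk --- points in the right direction and in broad outline matches the paper. However, two concrete gaps make the proposal fail as written.

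First, the scale requirements you impose are mutually inconsistent. You want both $L_{\mathrm s} \ge L_{\mathrm t} R_p$ (so that agreement on $B_{2L_{\mathrm s}}$ propagates to $B_{L_{\mathrm s}}$ throughout the block, for \eqref{eq:propagation.coupling}) and $s_0(L_{\mathrm t} - N_0) \ge L_{\mathrm s}$ (so the deterministic good region grows by $L_{\mathrm s}$ over the block, for \eqref{eq:contraction}). Together these force $s_0 > R_p$, whereas $s_0 \le R_p$ is automatic since the good region cannot spread faster than the range of a single step of the dynamics. The paper sidesteps this by a two-scale temporal decomposition $L_{\mathrm t} = L'_{\mathrm t} + L''_{\mathrm t}$ with $L'_{\mathrm t} \gg L_{\mathrm s} \gg L''_{\mathrm t}$ (all $\asymp \log(1/\gamma)$), growing the good region over $L'_{\mathrm t}$, performing the coupling over the much shorter $L''_{\mathrm t}$, and then gluing via intermediate re-coupling events at times $jL''_{\mathrm t}$ (Steps~3--4 and~6 of the proof); \eqref{eq:propagation.coupling} is established on the intersection of those events rather than by finite speed alone.

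Second, and more seriously, your merging step (c) rests on a union bound over pairs $(\chi,\chi') \in G_\eta \times G_\eta$. With $G_\eta = \{\chi : \gamma\chi(x) \in [\alpha,\beta] \text{ on } B_{2L_{\mathrm s}}(0)\}$ and $L_{\mathrm s} \asymp \log(1/\gamma)$, one has $|G_\eta| \approx (1/\gamma)^{|B_{2L_{\mathrm s}}(0)|}$, super-polynomial in $1/\gamma$, so no per-pair estimate survives the union bound. You dismiss sandwiching on grounds of non-monotonicity of $\eta_n \mapsto \eta_{n+1}$, but this overlooks the crucial trick: although the one-step map is not monotone, the driving Poisson flow $u \mapsto U^{(y,x)}_n(u)$ \emph{is} monotone non-decreasing, so sup/inf over $i$ can be pulled through it,
\begin{align*}
  \sup_{i} U^{(y,x)}_{n-1}\bigl(f(y;\eta^{(i)}_{n-1})\bigr)
  - \inf_{i} U^{(y,x)}_{n-1}\bigl(f(y;\eta^{(i)}_{n-1})\bigr)
  = U^{(y,x)}_{n-1}\Bigl(\sup_{i} f(y;\eta^{(i)}_{n-1})\Bigr)
  - U^{(y,x)}_{n-1}\Bigl(\inf_{i} f(y;\eta^{(i)}_{n-1})\Bigr).
\end{align*}
Taking conditional expectation turns this into
$\E\bigl[\sup_i \eta^{(i)}_n(x) - \inf_i \eta^{(i)}_n(x) \mid \FF_{n-1}\bigr]
 \le \sum_y p_{yx}\bigl(\sup_i f(y;\cdot) - \inf_i f(y;\cdot)\bigr)$,
and iterating with \eqref{eq:fgradsmall} yields
$\E[\text{range after } L''_{\mathrm t} \text{ steps}] \le c(\varepsilon)^{L''_{\mathrm t}}(\beta-\alpha)/\gamma$.
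Since the $\eta^{(i)}_n(x)$ are integers, Markov's inequality then bounds the probability that the range is $\ge 1$ somewhere in $B_{L_{\mathrm s}}$ by $|B_{L_{\mathrm s}}|\,c(\varepsilon)^{L''_{\mathrm t}}(\beta-\alpha)/\gamma$, which is small for $K''_{\mathrm t}$ large and $\gamma$ small. This single expectation bound simultaneously merges the full (super-exponentially large) family and is what replaces your union bound; it is the technical heart of the argument that your proposal is missing.
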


\begin{proof}
  The crucial idea is that using the flow version \eqref{eq:flowdef}
  we can augment the coupling argument in Lemma~13 in \cite{BD07} to
  work with a set of (good) initial conditions
  \begin{align}
    \label{eq:defI}
    \{\eta_0^{(i)}: i \in I\} = \big\{ \eta \in \Z_+^{\Z^d} :
    \alpha/\gamma \le \eta(x) \le \beta/\gamma \text{
    for } x \in B_{2L_{\mathrm{s}}}(0) \big\}
  \end{align}
  with $\alpha, \beta$ from \eqref{eq:0to1} and the (uncountable)
  index set $I$ being defined implicitly here.

  \smallskip

 The proof consists of 6 steps.
  For parameters $K'_{\mathrm{t}} \gg K_{\mathrm{s}} \gg
  K''_{\mathrm{t}}$ to be suitably tuned below, we set
  \begin{align*}
    L_{\mathrm{s}} & = \lceil K_{\mathrm{s}} \log(1/\gamma) \rceil \\
    L_{\mathrm{t}} & = L'_{\mathrm{t}} + L''_{\mathrm{t}} \quad
                     \text{with} \quad L'_{\mathrm{t}} = \lceil
                     K'_{\mathrm{t}} \log(1/\gamma) \rceil,
                     \; L''_{\mathrm{t}} = \lceil K''_{\mathrm{t}}
                     \log(1/\gamma) \rceil.
  \end{align*}
  In the first step, we use the propagation properties of the deterministic
  system as described in Lemma~\ref{lem:deterdyn} together with the
  fact that for small $\gamma$, the relative fluctuations of the
  driving Poisson processes are typically small to ensure that
  after time $L'_{\mathrm{t}}$, the `good region' has increased sufficiently.

\smallskip

  In the second step we use the flow version \eqref{eq:flowdef} and
  its contraction properties to ensure that in a subregion, after
  $L''_{\mathrm{t}}$ steps, coupling has occurred with high probability.

\smallskip

  Several copies of such subregions are then glued together in Steps~3
  and~4. In Step~5 we use the fact that in a good region, the relative
  fluctuations of $\eta$ are small so that $p_{\eta}(k; x,y)$ is
  close to the deterministic kernel $p_{xy}$; this ensures
  \eqref{eq:abstr-sb}. Finally, in the last step we collect the
  requirements on the various constants that occurred before and verify
  that they can be fulfilled consistently.

  \medskip

  \noindent \textit{Step~1}.\ Let
  \begin{align*}
    \mathcal{X}_1 \coloneqq \Big\{ \max_{\norm{x}, \norm{y} \le 5
      L_{\mathrm{s}}, p_{xy}>0, 0 < n \le L'_{\mathrm{t}}} \sup_{u \ge
      \alpha_0/\gamma} \Big| \frac{U^{(x,y)}_n(u)}{p_{xy} u} - 1 \Big|
    \le \delta \Big\}
  \end{align*}
  with $\alpha_0$ from Lemma~\ref{lem:deterdyn}.
  By standard large deviation estimates for Poisson processes,
  we have
  \begin{align}
    \label{eq:X1}
    \Pr(\mathcal{X}_1) \ge 1 - (10 L_{\mathrm{s}} R_p)^d L'_{\mathrm{t}}
    \exp(- c \alpha_0/\gamma)
  \end{align}
  (for some fixed constant $c>0$) which can be made arbitrarily close
  to $1$ by choosing $\gamma$ small.

 \smallskip

  By iterating \eqref{eq:0to1} in combination with \eqref{eq:0to11} we
  see that
  \begin{align}
    \label{eq:goodgrow}
    \mathcal{X}_1 \cap \big\{ \eta_0(x) \in [\alpha/\gamma,
    \beta/\gamma] \text{ for } x \in B_{2L_{\mathrm{s}}}(0) \big\}
    \subset \big\{ \eta_{L'_{\mathrm{t}}}(y) \in [\alpha/\gamma,
    \beta/\gamma] \;\; \text{for $y \in B_{5L_{\mathrm{s}}}(0)$}
    \big\}
  \end{align}
  if the ratio $L'_{\mathrm{t}}/L_{\mathrm{s}}$ is chosen sufficiently large.
  To verify this note that we can consider $\eta$ as a perturbation
  of the deterministic system $\zeta$ from \eqref{eq:coupl-map} and
  on $\mathcal{X}_1$ the relative size of the perturbation is small
  when $\gamma$ is small (cf.~\cite[Eq.~(13) and the proof of Lemma~7]{BD07}).

  \medskip

  \noindent \textit{Step~2}.\ Let
  $G_0 \subset \wt{\mathcal{D}}^{B_{3 L_{\mathrm{s}}}(0) \times \{1,\dots,L''_{\mathrm{t}}\}}$
  be the set of Poisson process path configurations in the space-time box
  $B_{3 L_{\mathrm{s}}}(0) \times \{1,\dots,L''_{\mathrm{t}}\}$ with the property
  \begin{multline}
    \restr{\eta_0}{B_{2 L_{\mathrm{s}}}(0)} \in [\alpha/\gamma,
    \beta/\gamma]^{B_{2 L_{\mathrm{s}}}(0)}, \:
    \restr{U}{B_{3 L_{\mathrm{s}}}(0) \times \{1,\dots,L''_{\mathrm{t}}\}} \in G_0 \\
    \quad \Longrightarrow \quad
    \big(\Phi_{1,L''_{\mathrm{t}}}(\eta_0)\big)(x) =
    \big(\Phi_{1,L''_{\mathrm{t}}}(\eta^{\mathrm{ref}})\big)(x) \;
    \text{for } \norm{x} \le L_{\mathrm{s}}
  \end{multline}
  with $\Phi_{1,L''_{\mathrm{t}}}$ as in \eqref{eq:flowdef2a} and
  $\eta^{\mathrm{ref}} \equiv \lceil m^* \rceil$.

  Observe that for $x \in \Z^d$, $n \in \Z_+$
  \begin{align}
    \begin{split}
    \sup_{i\in I} \eta^{(i)}_n(x) - \inf_{i\in I} \eta^{(i)}_n(x)
    & = \sup_{i\in I} \sum_y U^{(y,x)}_{n-1}\big(f(y;\eta^{(i)}_{n-1})\big) - \inf_{i\in
      I} \sum_y U^{(y,x)}_{n-1}\big(f(y;\eta^{(i)}_{n-1})\big) \\
    & \le \sum_y \left( \sup_{i\in I}
      U^{(y,x)}_{n-1}\big(f(y;\eta^{(i)}_{n-1})\big)- \inf_{i\in
      I} U^{(y,x)}_{n-1}\big(f(y;\eta^{(i)}_{n-1})\big)\right) \\
    & = \sum_y U^{(y,x)}_{n-1}\Big( \sup_{i\in I}
      f(y;\eta^{(i)}_{n-1})\Big) - U^{(y,x)}_{n-1}\Big( \inf_{i\in I}
      f(y;\eta^{(i)}_{n-1}) \Big).
    \end{split}
  \end{align}
  Thus,
  \begin{align}
    \label{eq:econdrange}
    \E\Big[ \sup_{i\in I} \eta^{(i)}_n(x) - \inf_{i\in I}
    \eta^{(i)}_n(x) \Big| \mathcal{F}_{n-1} \Big] \le \sum_y p_{yx}
    \Big( \sup_{i\in I} f(y;\eta^{(i)}_{n-1}) - \inf_{i\in I}
    f(y;\eta^{(i)}_{n-1}) \Big)
  \end{align}
  and we can now use contraction properties of $f$ near its fixed
  point, analogous to the proof of (44), (45) in Lemma~13 in
  \cite{BD07}. Note that if all $\eta^{(i)}_{n-1}(y)$, $i \in I$ are
  (locally around $x$) in the neighbourhood $[\alpha,\beta]$ of $m^*$
  (as required for \eqref{eq:fgradsmall}), there are
  $\tilde{\eta}_{y,z} \in [\alpha,\beta]^{B_{R_\lambda}(y)}$ such that
  \begin{align}
    \label{eq:econdrange2}
    \sup_{i\in I} f(y;\eta^{(i)}_{n-1}) - \inf_{i\in I}
    f(y;\eta^{(i)}_{n-1}) \le \sum_{z \in B_{R_\lambda}(y)}
    \big| {\textstyle \frac{\partial}{\partial \eta(z)}} f(y; \tilde{\eta}_{y,z}) \big|
    \Big( \sup_{i\in I} \eta^{(i)}_{n-1}(z) - \inf_{i\in I}
    \eta^{(i)}_{n-1}(z) \Big).
  \end{align}

  Put
  \begin{align}
    \psi_R(\eta) \coloneqq \ind{ \eta(x) \in [\alpha/\gamma,
    \beta/\gamma] \text{ for } \norm{x} \le R} .
  \end{align}
  We have on $\{ \inf_{i \in I} \psi_{R+R_p+R_\lambda}(\eta^{(i)}_{n-1})=1 \}$
{\allowdisplaybreaks
 \begin{align}
 & \frac{1}{\abs{B_R(0)}} \sum_{x\in B_R(0)} \E\Bigl[ \Bigl(\sup_{i\in I} \psi_R(\eta^{(i)}_n) \eta^{(i)}_n(x) - \inf_{i\in I} \psi_R(\eta^{(i)}_n) \eta^{(i)}_n(x) \Bigr) \big| \FF_{n-1}\Bigr]                                                      \notag\\
 & \quad \le \frac{1}{|B_R(0)|} \sum_{x\in B_R(0)} \E[ \sup_{i\in I} \eta^{(i)}_n(x) - \inf_{i\in I} \eta^{(i)}_n(x) \big| \FF_{n-1}]                                                                                                                 \notag\\
 & \quad \le \frac{1}{\abs{B_R(0)}}\sum_{x\in B_R(0)} \sum_{y \in B_{R_p}(x)} p_{yx} \sum_{z \in B_{R_\lambda}(y)} \abs{\nabla_z f(y;\tilde\eta_z)} \Bigl(\sup_{i\in I} \eta^{(i)}_{n-1}(z) - \inf_{i\in I} \eta^{(i)}_{n-1}(z) \Bigr)                \notag\\
 & \quad \le \sum_{z \in B_{R+R_p+R_\lambda}(0)} \Bigl(\sup_{i\in I} \eta^{(i)}_{n-1}(z) - \inf_{i\in I} \eta^{(i)}_{n-1}(z) \Bigr) \frac1{\abs{B_R(0)}} \sum_{y \in B_{R_\lambda}(z)} \abs{\nabla_z f(y;\tilde\eta_z )} \sum_{x \in B_R(0)} p_{xy}   \notag\\
 & \quad \le \frac{\abs{B_{R+R_p+R_\lambda}(0)}}{\abs{B_R(0)}} \Bigl(1-\frac{\e}2\Bigr) \frac{1}{\abs{B_{R+R_p+R_\lambda}(0)}} \sum_{z \in B_{R+R_p+R_\lambda}(0)} \Bigl(\sup_{i\in I} \eta^{(i)}_{n-1}(z) - \inf_{i\in I} \eta^{(i)}_{n-1}(z) \Bigr) \notag\\
 & \quad\le c(\e) \frac{1}{\abs{B_{R+R_p+R_\lambda}(0)}} \sum_{z \in B_{R+R_p+R_\lambda}(0)} \Bigl(\sup_{i\in I} \eta^{(i)}_{n-1}(z) - \inf_{i\in I} \eta^{(i)}_{n-1}(z) \Bigr),\label{eq:contrit}
\end{align}}%
where we used \eqref{eq:econdrange} and \eqref{eq:econdrange2} in the second inequality
and assume that $R$ is so large that
\begin{align}
  \label{eq:est.1.term.b}
  \frac{\abs{B_{R+R_p+R_\lambda}(0)}}{\abs{B_R(0)}}
    \Bigl(1-\frac{\e}2\Bigr) \le c(\e) < 1 .
\end{align}
Note that the factor $(1-\frac{\varepsilon}2)$ comes from \eqref{eq:fgradsmall}.

We can iterate \eqref{eq:contrit} for
$n=L''_{\mathrm{t}}, L''_{\mathrm{t}}-1,\dots, 1$ to obtain on
$\mathcal{X}_1$ (which in particular implies
$\psi_{L_{\mathrm{s}}+k(R_p+R_\lambda)}(\eta_{n-k})=1$ for $k=1,2,\dots,n-1$) that
\begin{align}
  \label{eq:est.1.term}
  \begin{split}
    \frac1{\abs{B_{L_{\mathrm{s}}}(0)}}
    & \sum_{x \in B_{L_{\mathrm{s}}}(0)} \E\Bigl[ \Bigl(\sup_{i\in I}
    \psi_{L_{\mathrm{s}}}(\eta^{(i)}_{L''_{\mathrm{t}}})
    \eta^{(i)}_{L''_{\mathrm{t}}}(x) - \inf_{i\in I}
    \psi_{L_{\mathrm{s}}}(\eta^{(i)}_{L''_{\mathrm{t}}})\eta^{(i)}_{L''_{\mathrm{t}}}(x)
    \Bigr) \big| \FF_{0}\Bigr] \\
    & \le c(\e)^{L''_{\mathrm{t}}}
    \frac{1}{\abs{B_{L_{\mathrm{s}}+L''_{\mathrm{t}}(R_p+R_\lambda)}(0)}}
    \sum_{z \in B_{L_{\mathrm{s}}+L''_{\mathrm{t}}(R_p+R_\lambda)}(0)}
    \Bigl(\sup_{i\in I} \eta^{(i)}_0(z) - \inf_{i\in I}
    \eta^{(i)}_0(z) \Bigr) \\
    & \le c(\e)^{L''_{\mathrm{t}}} \frac{\beta-\alpha}{\gamma}.
  \end{split}
\end{align}
On the event
$\{ \inf_{i \in I}
\psi_{R+L''_{\mathrm{t}}(R_p+R_\lambda)}(\eta^{(i)}_0)=1 \}$ via
Markov inequality \eqref{eq:est.1.term} yields
\begin{align}
  \Pr\Big( \max_{\norm{x} \le L_{\mathrm{s}}}
  \big( \sup_{i\in I} \psi_{L_{\mathrm{s}}}(\eta^{(i)}_{L''_{\mathrm{t}}}) \eta^{(i)}_{L''_{\mathrm{t}}}(x)
  - \inf_{i\in I} \psi_{L_{\mathrm{s}}}(\eta^{(i)}_{L''_{\mathrm{t}}})\eta^{(i)}_{L''_{\mathrm{t}}}(x) \big)
  \ge 1 \, \Big| \, \mathcal{F}_0 \Big)
  \le \abs{B_{L_{\mathrm{s}}}(0)} c(\e)^{L''_{\mathrm{t}}} \frac{\beta-\alpha}{\gamma}
\end{align}
and the right hand side can be made as small as we like by choosing
$\gamma$ small.

Hence for
$\mathcal{X}_2 := \{ \restr{U}{B_{3 L_{\mathrm{s}}}(0) \times
  \{1,\dots,L''_{\mathrm{t}}\}} \in G_0 \}$ we have
\begin{align}
  \label{eq:X2}
  \Pr(\mathcal{X}_2) \ge 1- \abs{B_{L_{\mathrm{s}}}(0)}
  c(\e)^{L''_{\mathrm{t}}} \frac{\beta-\alpha}{\gamma}.
\end{align}

 \medskip

 \noindent
 \textit{Step~3}.\ Let $\mathcal{X}_2(y,k)$ be the event that
 $\mathcal{X}_2$ occurs in the space-time box whose `bottom' is
 centred at $(y,k)$, i.e.\
 $\mathcal{X}_2(y,k) = \{ \restr{U}{B_{3 L_{\mathrm{s}}}(y) \times
   \{k+1,\dots,k+L''_{\mathrm{t}}\}} \in G_0 \}$. By construction, on
 \begin{align}
   \mathcal{X}_3 \coloneqq \mathcal{X}_1 \cap
   \bigcap_{\substack{j \in \{-2,-1,\dots,2\}, \\ k=1,\dots,d}}
   \mathcal{X}_2(j L_{\mathrm{s}}e_k , L'_{\mathrm{t}})
 \end{align}
 we have
 \begin{align}
   \eta_0(x) \in [\alpha/\gamma, \beta/\gamma] \text{ for } x \in B_{2L_{\mathrm{s}}}(0)
   \quad \Longrightarrow \quad
   \eta_{L_{\mathrm{t}}}(y) =
   \big(\Phi_{1,L_{\mathrm{t}}}(\eta^{\mathrm{ref}})\big)(y) \;
   \text{for } \norm{y} \le 3 L_{\mathrm{s}},
 \end{align}
 i.e.\ \eqref{eq:contraction} holds. In particular
 \begin{align}
   \label{eq:X3}
   \Pr(\mathcal{X}_3) \ge
   \Pr(\mathcal{X}_1) - 5^d(1- \Pr (\mathcal{X}_2))
 \end{align}

  \medskip

  \noindent \textit{Step~4}.\ On
  \begin{align}
    \mathcal{X}_4 \coloneqq \mathcal{X}_3 \cap
    \bigcap_{j=0,\dots,\lceil L'_{\mathrm{t}}/L''_{\mathrm{t}} \rceil}
    \mathcal{X}_2(0, j
    L''_{\mathrm{t}}),
  \end{align}
  \eqref{eq:propagation.coupling} holds as well, and we have
  \begin{align}
    \label{eq:X4}
    \Pr(\mathcal{X}_4) \ge  \Pr(\mathcal{X}_3) -
  \lceil L'_{\mathrm{t}}/L''_{\mathrm{t}} \rceil (1- \Pr
  (\mathcal{X}_2))
  \end{align}

  \medskip

  \noindent
  \textit{Step~5}.\ Note that on $\mathcal{X}_4$ we have
  \begin{align*}
    \eta_0(x) \in [\alpha/\gamma, \beta/\gamma] \text{ for } x \in
    B_{2L_{\mathrm{s}}}(0) \quad  \Longrightarrow \quad
    \eta_{n}(y)  \in [\alpha/\gamma, \beta/\gamma] \; \text{for }
    \norm{y} \le 2 L_{\mathrm{s}}, n =1, \dots, L_{\mathrm{t}}.
  \end{align*}
  Then \eqref{eq:anclinquedyn} implies
  \begin{align*}
    \frac{\alpha}{\beta} p_{xy} \le p_{\eta}(k; x,y) \le
    \frac{\beta}{\alpha} p_{xy} \quad
    \text{ for } x,y \in
    B_{2L_{\mathrm{s}}}(0), \; k=1,\dots, L_{\mathrm{t}},
  \end{align*}
  hence the total variation distance between
  $p_{\eta}(k; x, \cdot)$ and $p_{x,\cdot}$ is at most
  $(1-\frac{\alpha}{\beta}) \vee (\frac{\beta}{\alpha}-1)$ uniformly
  inside this space-time block. We use
  Lemma~\ref{lem:deterdyn},~\emph{(iii)} to make this so small that
  coupling arguments as in the proof of Lemma~\ref{lem:aprioribd}
  (with a comparison random walk that has a deterministic drift
  $d_\mathrm{max} \ll
  L_{\mathrm{s}}/(L'_{\mathrm{t}}+L''_{\mathrm{t}}$)
  show \eqref{eq:abstr-sb}.

  \medskip

  \noindent
  \textit{Step~6}.\ Finally, we verify that the constants
  $K_{\mathrm{s}}, K'_{\mathrm{t}}, K''_{\mathrm{t}}$ can be chosen
  consistently so that all intermediate requirements are fulfilled.

  \begin{enumerate}
  \item The right-hand side of \eqref{eq:X1} can be chosen arbitrarily
    close to $1$ for any choice of
    $K_{\mathrm{s}}, K'_{\mathrm{t}}, K''_{\mathrm{t}}$ by making
    $\gamma$ small.
  \item \eqref{eq:goodgrow} requires that
    $K'_{\mathrm{t}} > \frac{3}{s_0} K_{\mathrm{s}}$ with $s_0$ from
    \eqref{eq:0to1}, \eqref{eq:0to11}.
    \item \eqref{eq:est.1.term}, which uses \eqref{eq:est.1.term.b}
      $L''_{\mathrm{t}}$ times, requires that
      $L_{\mathrm{s}} - (R_p+R_\lambda) L''_{\mathrm{t}}$ is large.
      This is achieved when
      $K_{\mathrm{s}} \gg (R_p+R_\lambda) K''_{\mathrm{t}}$ (and
      $\gamma$ is small).
    \item The right-hand side of \eqref{eq:X2} can be made close to
      $1$ if $K''_{\mathrm{t}} > (-\log c(\varepsilon))^{-1}$ (and
      $\gamma$ is small). This also implies that the right-hand side
      of \eqref{eq:X3} can be chosen arbitrarily close to $1$.
    \item For \eqref{eq:X4} note that
      $\frac{L'_{\mathrm{t}}}{L''_{\mathrm{t}}} \approx
      \frac{K'_{\mathrm{t}}}{K''_{\mathrm{t}}}$
      is a fixed ratio when $\gamma$ is small, and
      $1-\Pr(\mathcal{X}_2)$ can be made small by choosing $\gamma$
      small.
  \end{enumerate}
  We see that for $\gamma \le \gamma^*$ for some $\gamma^*>0$, all
  requirements can be fulfilled e.g.\ by choosing
  $K''_{\mathrm{t}} \coloneqq 2/(-\log c(\varepsilon))$,
  $K_{\mathrm{s}} \coloneqq C (R_p+R_\lambda) K''_{\mathrm{t}}$ with
  some large $C$ and
  $K'_{\mathrm{t}} \coloneqq \frac{6}{s_0} K_{\mathrm{s}}$.
\end{proof}

\section{Discussion of further classes of population models}
\label{sec:disc-furth-class}

\label{rem:furthermodels}
\rm While we analysed in this article only one explicit spatial
population model, namely logistic branching random walks (LBRW)
defined in \eqref{eq:flowdef} with the dynamics of the space-time
embedding of an ancestral lineage given by \eqref{eq:defX-real}, we do
believe that the same program can be carried out for many related
population models and that LBRW is in this sense prototypical; see
also \cite[Remark~5]{BD07}. We list and discuss some of these in the
following five paragraphs. Note that implementing the details to
verify the conditions of Theorem~\ref{thm:abstr-regen} for these
models will still require quite some technical work and we defer this
to future research.

\paragraph{More general `regulation functions'}
The logistic function $x \mapsto x(m-\lambda x)$ (with $\lambda$ small
enough) whose `spatial version' is used in the definition
\eqref{def:f} can be replaced by some function $\phi: \R_+ \to [0,a]$,
$a \in (0,\infty) \cup \{\infty\}$ with $\phi'(0)>1$, $\lim_{x \to a}
\phi(x) =0$ that possesses a unique attracting fixed point $x^* =
\phi(x^*)>0$.  This will ensure that a result analogous to
Lemma~\ref{lem:deterdyn} holds, and an analogue of
Proposition~\ref{prop:flowcoupling} can be obtained when a suitable
small parameter is introduced.

For example, in the ecology literature, in addition to the logistic
model, also the \emph{Ricker model} corresponding to $\phi(x) = x
\exp(r-\lambda x)$ and the \emph{Hassel model} corresponding to
$\phi(x) = m x/(1 + \lambda x)^b$ are used to describe population
dynamics under limited resources ($r>0$, resp., $m>1$ and $b>0$ are
parameters). Note that in all these cases,
$1/\lambda$ is related to a carrying capacity, so assuming $\lambda$
small means weak competition.

\paragraph{More general families of offspring distributions}
As described in the informal discussion above \eqref{eq:Nxy},
\eqref{eq:eta-rm} can be interpreted as stipulating that each
individual at $y$ in generation $n$ has a Poisson number of offspring
(with mean $f(y;\eta_n)/\eta_n(y)$) which then independently take a
random walk step. One could replace the Poisson distribution by some
another family of distributions $\mathcal{L}(X(\nu))$ on $\N_0$ that
is parametrised by the mean $\E[X(\nu)]=\nu \in [0,\bar{\nu}]$ where
$\bar{\nu} \ge \sup_{y, \eta \not\equiv 0} f(y;\eta)/\eta(y)$ and then
define the model accordingly. If the family of offspring laws
satisfies a suitably quantitative version of the law of large numbers
(cf.\ Step~1 of the proof of Proposition~\ref{prop:flowcoupling}), one can
derive an analogue of Proposition~\ref{prop:flowcoupling}.

For example, one could take an $\N_0$-valued random variable $X$ with
mean $\E[X]=\bar{\nu}$ and $\E[e^{a X}] < \infty$ for some $a>0$ and
then define $X(\nu)$ via independent thinning, i.e.\ $X(\nu)
\stackrel{d}{=} \sum_{i=1}^X \ind{U_i \le \nu/\bar{\nu}}$ where
$U_1,U_2,\dots$ are i.i.d.\ uniform($[0,1]$).

\paragraph{`Moderately' small competition parameters}
As it stands, Theorem~\ref{thm:lln-clt-rm} requires sufficiently (in
fact, very) small competition parameters (cf.\
Assumption~\ref{ass:m-lambda}). This is owed to the fact that our
abstract `work-horse' Theorem~\ref{thm:abstr-regen} requires (very)
small $\varepsilon_U$ in Assumption~\ref{ass:coupling} and
$\varepsilon$ in Assumption~\ref{ass:cl-srw} (we in fact did not spell
out explicit bounds). In simulations of LBRW one observes also for
moderately small competition parameters $\lambda_{xy}$ apparent
stabilisation to a non-trivial `equilibrium' as required by
Assumption~\ref{ass:coupling}. We note that the assumptions of
Theorem~\ref{thm:abstr-regen} are `effective' in the sense that they
only require controlling the system $(\eta_n)$ and the walk in certain
finite space-time boxes. Thus, a suitably quantified version of
Theorem~\ref{thm:abstr-regen} allows at least in principle to
ascertain via simulations that for a given choice of parameters $m$,
$(p_{xy})$ and $(\lambda_{xy})$ the system $(\eta_n)$ has a unique
non-trivial ergodic equilibrium and that the conclusions of
Theorem~\ref{thm:lln-clt-rm}~hold.\looseness=1

\paragraph{Continuous-time and continuous-mass models}
An infinite system of interacting diffusions that can be obtained as a
time- and mass-rescaling of LBRW is considered in
\cite{Etheridge:2004}, see Definition~1.3 there; one can in principle
define an `ancestral lineage' in such a model which will be a certain
continuous-time random walk in (the time-reversal of this) random
environment. It is conceivable that a coarse-graining construction
similar to the one discussed here can be implemented and that in fact
an analogue of Theorem~\ref{thm:lln-clt-rm} can be proved at least for
suitably small interaction parameters.

\paragraph{Reversible Markov systems}
Obviously, we tailored Theorem~\ref{thm:abstr-regen} and its
assumptions to a random walk that moves in the time-reversal of a
non-reversible Markov system $\eta$ which possesses two distinct
ergodic equilibria, our prime example being the (discrete time)
contact process.

If we instead assume that $\eta$ is a reversible Markov system with
local dynamics and `good' mixing properties possessing a unique
equilibrium (for example, the stochastic Ising model at high
temperature), the assumptions from Section~\ref{subsect:abstr-assumpt}
will be fulfilled as well -- this case is in fact easier since `good
blocks' in $\eta_{(\wt{n}+1) L_{\mathrm{t}}}$ as required in
\eqref{eq:contraction} of Assumption~\ref{ass:coupling} will have
uniformly high probability anyway, irrespective of
$\eta_{\wt{n} L_{\mathrm{t}}}$. Assume in addition that we can verify
Assumption~\ref{ass:cl-srw} for the walk. For example, this can be
done by requiring that the walk is a (sufficiently small) perturbation
of a fixed symmetric random walk or by assuming a (small) a priori
bound on the drift. Then we don't need to require the symmetry
assumption~\eqref{eq:abstr-symm} (in fact, the resulting walk can have
non-zero speed). This re-reading of Theorem~\ref{thm:abstr-regen} and
its proof allows to recover a special case of
\cite[Thm.~3.6]{RedigVoellering:2013} where, using entirely different
methods, a CLT is obtained for random walks in dynamic environments
that satisfy sufficiently strong coupling and mixing properties.

\appendix
\section{An auxiliary result}
The following result should be standard, we give here a brief argument
for completeness' sake and for lack of a precise point reference.

\begin{lemma}
  \label{lem:equalstoppingtimes}
  Let $\mathcal{F}=(\mathcal{F}_n)_{n =0,1,\dots}$ be a filtration,
  $T$, $T'$ finite $\mathcal F$-stopping times, and $Y$ a bounded
  random variable. We have
  \begin{equation}
    \label{eq:condexpTeqTp}
    \E\left[ Y \mid \mathcal{F}_T \right] \ind{T=T'}
    = \E\left[ Y \mid \mathcal{F}_ {T'} \right] \ind{T=T'} \quad \text{a.s.}
  \end{equation}
\end{lemma}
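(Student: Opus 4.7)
The plan is to reduce \eqref{eq:condexpTeqTp} to the basic identity that the $\sigma$-algebras $\mathcal{F}_T$ and $\mathcal{F}_{T'}$ agree when restricted to the event $\{T=T'\}$, and then invoke the defining property of conditional expectation.

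First I would record the standard fact that $\{T=T'\} = \bigcup_n (\{T=n\} \cap \{T'=n\}) \in \mathcal{F}_T \cap \mathcal{F}_{T'}$, since $\{T=n\}\in \mathcal{F}_n$, $\{T'=n\} \in \mathcal{F}_n$ and $\mathcal{F}_T$ is by definition $\{A \in \mathcal{F}_\infty : A \cap \{T=n\} \in \mathcal{F}_n \text{ for all } n\}$ (analogously for $\mathcal{F}_{T'}$). From the same definition, for any $A \in \mathcal{F}_T$ and any $n$, $A \cap \{T=T'\} \cap \{T'=n\} = (A \cap \{T=n\}) \cap \{T'=n\} \in \mathcal{F}_n$, so $A \cap \{T=T'\} \in \mathcal{F}_{T'}$. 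By symmetry, the same holds with the roles of $T$ and $T'$ interchanged.

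Write $Z \coloneqq \E[Y \mid \mathcal{F}_T]$. Since $Z$ is $\mathcal{F}_T$-measurable and $\{T=T'\} \in \mathcal{F}_T$, the product $Z \ind{T=T'}$ is $\mathcal{F}_T$-measurable; by the observation just made (applied to the events $\{Z \le c\} \cap \{T=T'\}$), $Z \ind{T=T'}$ is in fact also $\mathcal{F}_{T'}$-measurable. It therefore suffices to verify that for every bounded $\mathcal{F}_{T'}$-measurable random variable $W$,
\begin{equation}
\label{eq:testeq}
\E\bigl[ Z \ind{T=T'} W \bigr] = \E\bigl[ Y \ind{T=T'} W \bigr].
\end{equation}
But $\ind{T=T'} W$ is $\mathcal{F}_{T'}$-measurable, and by the symmetric version of the observation above (applied to events $\{W \le c\} \cap \{T=T'\}$) it is likewise $\mathcal{F}_T$-measurable. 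Hence the tower property gives $\E[Z \ind{T=T'} W] = \E[\E[Y \mid \mathcal{F}_T] \ind{T=T'} W] = \E[Y \ind{T=T'} W]$, establishing \eqref{eq:testeq} and therefore \eqref{eq:condexpTeqTp}.

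There is no real obstacle here — the proof is essentially bookkeeping with the definition of $\mathcal{F}_T$. The only mildly delicate point is making sure to use both directions of the identity $\mathcal{F}_T \cap \{T=T'\} = \mathcal{F}_{T'} \cap \{T=T'\}$: once to see that the candidate $Z \ind{T=T'}$ lives in $\mathcal{F}_{T'}$, and once to see that arbitrary $\mathcal{F}_{T'}$-test functions, multiplied by $\ind{T=T'}$, lie in $\mathcal{F}_T$ so that one can pull $Y$ out of the conditional expectation defining $Z$.
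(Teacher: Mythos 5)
Your proof is correct and follows essentially the same route as the paper's: establish that $\mathcal{F}_T$ and $\mathcal{F}_{T'}$ agree on $\{T=T'\}$, then check measurability of the candidate and the defining integral identity of conditional expectation. The only cosmetic difference is that you test against bounded $\mathcal{F}_{T'}$-measurable $W$ while the paper tests against indicators of sets $A'\in\mathcal{F}_{T'}$, which is equivalent.
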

\begin{proof}
  Note that $\{T=T'\} \in \mathcal{F}_T \cap \mathcal{F}_{T'}$ because
  \begin{align*}
    \{T=T'\} \cap \{T=n\} = \{T=T'\} \cap \{T'=n\} = \{T=n\} \cap
  \{T'=n\} \in \mathcal{F}_n, \quad n =0,1,\dots.
  \end{align*}
  Furthermore we have
  \begin{align*}
    A \in \mathcal{F}_T \cup \mathcal{F}_ {T'} \quad \Rightarrow \quad
    A \cap \{T=T'\} \in \mathcal{F}_T \cap \mathcal{F}_ {T'}.
  \end{align*}
  To see this note that for $A \in \mathcal{F}_T$
  \begin{align*}
   A \cap \{T=T'\} \cap \{T'=n\}= (A \cap \{T=n\}) \cap \{T'=n\} \in
  \mathcal{F}_n, \quad n =0,1,\dots.
  \end{align*}
  Thus, we obtain $A \cap \{T=T'\} \in \mathcal{F}_ {T'}$ and a
  similar argument for the other case shows the assertion. By
  approximation arguments we find that
  \begin{align*}
    Z\; \text{is $\mathcal{F}_T$-measurable} \quad \Rightarrow \quad Z
    \ind{T=T'} \; \text{is ($\mathcal{F}_T \cap \mathcal{F}_
    {T'}$)-measurable}.
  \end{align*}
  Let $Z$ be a version of
  $\E\left[ Y\ind{T=T'} \mid \mathcal{F}_T \right] = \ind{T=T'}
  \E\left[ Y \mid \mathcal{F}_T \right]$,
  i.e., $Z$ is $\mathcal{F}_T$-measurable,
  $\E\left[ Z \indset{A}\right] = \E\left[ Y \ind{T=T'}
    \indset{A}\right]$
  for all $A \in \mathcal{F}_T$. We may assume that $Z=Z\ind{T=T'}$.
  Then, $Z$ is also a version of
  $\E\left[ Y\ind{T=T'} \mid \mathcal{F}_{T'} \right] = \ind{T=T'}
  \E\left[ Y \mid \mathcal{F}_{T'} \right]$. Furthermore
  $Z=Z\ind{T=T'}$ is also $\mathcal{F}_ {T'}$-measurable and for
  $A'\in\mathcal{F}_{T'}$,
  \begin{align*}
    \E\left[ Z \indset{A'}\right] = \E\left[ Z \indset{A' \cap
        \{T=T'\}}\right] \E\left[ Y \ind{T=T'} \indset{A' \cap
        \{T=T'\}}\right] = \E\left[ Y \ind{T=T'} \indset{A'}\right].
  \end{align*}
  This concludes the proof of the lemma.
\end{proof}


\begin{thebibliography}{10}

\bibitem{AvenaDosSantosVoellering:2013}
Luca Avena, Renato~Soares dos Santos, and Florian V{\"o}llering,
  \emph{Transient random walk in symmetric exclusion: limit theorems and an
  {E}instein relation}, ALEA Lat. Am. J. Probab. Math. Stat. \textbf{10}
  (2013), no.~2, 693--709.
\MR{3108811}

\bibitem{AvenaJaraVoellering:2014}
Luca Avena, Milton Jara, and Florian Voellering, \emph{Explicit {LDP} for a
  slowed {RW} driven by a symmetric exclusion process}, preprint
  \ARXIV{1409.3013} (2014).

\bibitem{BDE02}
Nick~H. Barton, Frantz Depaulis, and Alison~M. Etheridge, \emph{Neutral
  evolution in spatially continuous populations}, Theoretical Population
  Biology \textbf{61} (2002), no.~1, 31--48.

\bibitem{BethuelsenHeydenreich2015}
Stein~Andreas Bethuelsen and Markus Heydenreich, \emph{Law of large numbers for
  random walks on attractive spin-flip dynamics}, preprint
  \ARXIV{1411.3581v2} (2015).

\bibitem{BCDG13}
Matthias Birkner, Ji{\v{r}}{\'{\i}} {\v{C}}ern{\'y}, Andrej Depperschmidt, and
  Nina Gantert, \emph{Directed random walk on the backbone of an oriented
  percolation cluster}, Electron. J. Probab. \textbf{18} (2013), no. 80, 35.
\MR{3101646}

\bibitem{BD07}
Matthias Birkner and Andrej Depperschmidt, \emph{Survival and complete
  convergence for a spatial branching system with local regulation}, Ann. Appl.
  Probab. \textbf{17} (2007), no.~5-6, 1777--1807.
\MR{2358641}

\bibitem{CoxMaricSchinazi2010}
J.~Theodore Cox, Nevena Mari{\'c}, and Rinaldo Schinazi, \emph{Contact process
  in a wedge}, J. Stat. Phys. \textbf{139} (2010), no.~3, 506--517. \MR{2609452
  (2011f:60191)} 

\bibitem{HKS14}
Frank den Hollander, Harry Kesten, and Vladas Sidoravicius, \emph{Random walk
  in a high density dynamic random environment}, Indag. Math. (N.S.)
  \textbf{25} (2014), no.~4, 785--799.
\MR{3217035}

\bibitem{Dep08}
Andrej Depperschmidt, \emph{Survival, complete convergence and decay of
  correlations for a spatial branching system with local regulation}, Ph.D.
  thesis, TU Berlin, 2008.

\bibitem{Dur84}
Richard Durrett, \emph{Oriented percolation in two dimensions}, Ann. Probab.
  \textbf{12} (1984), no.~4, 999--1040.
\MR{0757768}

\bibitem{Dur92}
Richard Durrett, \emph{Multicolor particle systems with large threshold and range}, J.
  Theoret. Probab. \textbf{5} (1992), no.~1, 127--152.
\MR{1144730}

\bibitem{DG82}
Richard Durrett and David Griffeath, \emph{Contact processes in several
  dimensions}, Z. Wahrsch. Verw. Gebiete \textbf{59} (1982), no.~4, 535--552.
\MR{0656515}

\bibitem{DS88}
Richard Durrett and Roberto~H. Schonmann, \emph{Large deviations for the
  contact process and two-dimensional percolation}, Probab. Theory Related
  Fields \textbf{77} (1988), no.~4, 583--603.
\MR{0933991}

\bibitem{Etheridge:2004}
Alison~M. Etheridge, \emph{Survival and extinction in a locally regulated
  population}, Ann. Appl. Probab. \textbf{14} (2004), no.~1, 188--214.
\MR{2023020}

\bibitem{Etheridge2011}
Alison~M. Etheridge, \emph{Some mathematical models from population genetics}, Lecture
  Notes in Mathematics, vol. 2012, Springer, Heidelberg, 2011, Lectures from
  the 39th Probability Summer School held in Saint-Flour, 2009. \MR{2759587
  (2011j:92043)} 

\bibitem{FvZ03}
Marta Fiocco and Willem~R. van Zwet, \emph{Decaying correlations for the
  supercritical contact process conditioned on survival}, Bernoulli \textbf{9}
  (2003), no.~5, 763--781.
\MR{2047685}

\bibitem{GaretMarchand2014}
Olivier Garet and R{\'e}gine Marchand, \emph{Large deviations for the contact
  process in random environment}, Ann. Probab. \textbf{42} (2014), no.~4,
  1438--1479.
\MR{3262483}

\bibitem{GH02}
Geoffrey Grimmett and Philipp Hiemer, \emph{Directed percolation and random
  walk}, In and out of equilibrium ({M}ambucaba, 2000), Progr. Probab.,
  vol.~51, Birkh\"auser Boston, Boston, MA, 2002, pp.~273--297. \MR{1901958
  (2003b:60159)} 

\bibitem{KomorowskiLandimOlla2012}
Tomasz Komorowski, Claudio Landim, and Stefano Olla, \emph{Fluctuations in
  {M}arkov processes}, Grundlehren der Mathematischen Wissenschaften
  [Fundamental Principles of Mathematical Sciences], vol. 345, Springer,
  Heidelberg, 2012, Time symmetry and martingale approximation.
\MR{2952852}

\bibitem{Kuczek:1989}
Thomas Kuczek, \emph{The central limit theorem for the right edge of
  supercritical oriented percolation}, Ann. Probab. \textbf{17} (1989), no.~4,
  1322--1332.
\MR{1048929}

\bibitem{Lig99}
Thomas~M. Liggett, \emph{Stochastic interacting systems: contact, voter and
  exclusion processes}, Grundlehren der Mathematischen Wissenschaften
  [Fundamental Principles of Mathematical Sciences], vol. 324, Springer-Verlag,
  Berlin, 1999.
\MR{1717346}

\bibitem{LSS97}
Thomas~M. Liggett, Roberto~H. Schonmann, and Alan~M. Stacey, \emph{Domination
  by product measures}, Ann. Probab. \textbf{25} (1997), no.~1, 71--95.
\MR{1428500}

\bibitem{LiggettSteif2006}
Thomas~M. Liggett and Jeffrey~E. Steif, \emph{Stochastic domination: the
  contact process, {I}sing models and {FKG} measures}, Ann. Inst. H. Poincar\'e
  Probab. Statist. \textbf{42} (2006), no.~2, 223--243. \MR{2199800
  (2007i:60131)} 

\bibitem{Miller2016}
Katja Miller, \emph{Random walks on weighted, oriented percolation clusters},
  ALEA Lat. Am. J. Probab. Math. Stat. \textbf{13} (2016), 53--77.
\MR{3460870}

\bibitem{MV15}
Thomas Mountford and Maria~E. Vares, \emph{Random walks generated by
  equilibrium contact processes}, Electron. J. Probab. \textbf{20} (2015), no.
  3, 17.
\MR{3311216}

\bibitem{Rassoul-AghaSeppalainen2008}
Firas Rassoul-Agha and Timo Sepp{\"a}l{\"a}inen, \emph{An almost sure
  invariance principle for additive functionals of {M}arkov chains}, Statist.
  Probab. Lett. \textbf{78} (2008), no.~7, 854--860.
\MR{2398359}

\bibitem{RedigVoellering:2013}
Frank Redig and Florian V{\"o}llering, \emph{Random walks in dynamic random
  environments: a transference principle}, Ann. Probab. \textbf{41} (2013),
  no.~5, 3157--3180.
\MR{3127878}

\bibitem{Sznitman:2000}
Alain-Sol Sznitman, \emph{Slowdown estimates and central limit theorem for
  random walks in random environment}, J. Eur. Math. Soc. (JEMS) \textbf{2}
  (2000), no.~2, 93--143.
\MR{1763302}

\end{thebibliography}

\providecommand{\MR}{\relax\ifhmode\unskip\space\fi MR }
\providecommand{\MRhref}[2]{%
  \href{http://www.ams.org/mathscinet-getitem?mr=#1}{#2}
}
\providecommand{\href}[2]{#2}

\end{document}